\documentclass[a4paper]{article}


\usepackage{geometry}         

\usepackage{amssymb}
\usepackage{amsmath,cases}
\usepackage{enumerate}
\usepackage{graphicx}
\usepackage{textcomp}
\usepackage{mathrsfs}
\usepackage{bbm}
\usepackage{bm}
\usepackage{caption}
\usepackage{chemarrow}
\usepackage{stmaryrd}
\usepackage{url}
\usepackage{psfrag}
\usepackage[active]{srcltx}   
\usepackage{color}



\renewcommand{\iff}{if and only if }
\renewcommand{\bf}{\textbf}

\newcommand{\ton}{\xrightarrow[n\to\infty]{}}
\newcommand{\tok}{\xrightarrow[k\to\infty]{}}
\newcommand{\tol}{\xrightarrow[n \to \infty]{(d)}}
\newcommand{\tolk}{\xrightarrow[k \to \infty]{(d)}}

\newcommand{\tode}[3]{\xrightarrow[#2 \to #3]{#1}}

\newcommand{\N}{\mathbb{N}}
\newcommand{\Z}{\mathbb{Z}}
\newcommand{\Zp}{\Z_+}
\newcommand{\R}{\mathbb{R}}
\newcommand{\rR}{\mathcal{R}}
\newcommand{\C}{\mathcal{C}}
\renewcommand{\O}{\mathcal{O}}
\newcommand{\U}{\mathcal{U}}
\renewcommand{\S}{\mathcal{S}}
\renewcommand{\P}{\mathbb{P}}
\newcommand{\var}{\text{Var}}
\newcommand{\E}[1]{\mathbb{E}\left[ #1 \right]}

\renewcommand{\L}{\mathfrak{L}}

\newcommand{\K}{\mathcal{K}}
\newcommand{\W}{\mathcal{W}}

\newcommand{\sand}{\text{\hspace{5mm}and\hspace{5mm}}}

\newcommand{\scomp}{,\hspace{8mm}}
\newcommand{\scom}{,\hspace{1cm}}
\newcommand{\cov}{\text{cov}}
\newcommand{\law}{\stackrel{(law)}{=}}
\newcommand{\1}[1]{\mathbbm{1}_{#1}}
\newcommand{\ent}[2]{\llbracket #1,#2 \rrbracket}
\newcommand{\de}{\mathrel{\mathop:}=}

\newcommand{\choo}[2]{\begin{pmatrix}#1 \\ #2 \end{pmatrix}}

\newcommand{\bs}{\backslash}

\newcommand{\eps}{\varepsilon}
\newcommand{\ori}{\check}
\newcommand{\unrs}{\s^{\hspace{-1.8mm}\raisebox{0.5mm}{$\scriptscriptstyle\nrightarrow$}}}

\newcommand{\s}{\mathfrak{s}}
\renewcommand{\l}{\mathfrak{l}}
\renewcommand{\t}{\mathfrak{t}}
\renewcommand{\a}{\mathfrak{a}}
\newcommand{\e}{\mathfrak{e}}
\newcommand{\ccc}{\mathfrak{c}}
\renewcommand{\u}{\mathfrak{u}}
\newcommand{\f}{\mathfrak{f}}
\newcommand{\fF}{\mathfrak{F}}
\newcommand{\CC}{\mathfrak{C}}
\newcommand{\q}{\mathfrak{q}}
\newcommand{\m}{\mathfrak{m}}
\newcommand{\F}{\mathscr{F}}

\newcommand{\Sg}{\mathfrak{S}}

\newcommand{\dgh}{d_{GH}}
\newcommand{\T}{\mathcal{T}}

\newcommand{\Q}{\mathcal{Q}}
\newcommand{\Qb}{\mathcal{Q}^\bullet}
\newcommand{\M}{\mathcal{M}}
\newcommand{\mM}{\mathfrak{M}}


\newcommand{\dH}{\textup{dim}_H}
\newcommand{\dGH}{d_{GH}}
\newcommand{\dis}{\text{dis}}
\newcommand{\lin}{\mathscr{L}}
\newcommand{\rr}{\dot}

\renewcommand{\ll}[1]{\underline #1}
\newcommand{\cc}[1]{\overline #1}

\newcommand{\g}{\gamma \, n^{\frac 1 4}}

\newcommand{\ga}{p}
\renewcommand{\k}{{2(6g-3)}}

\newcommand{\lp}{\left(}
\newcommand{\rp}{\right)}
\newcommand{\lb}{\left\{}
\newcommand{\rb}{\right\}}
\newcommand{\lf}{\left\lfloor}
\newcommand{\rf}{\right\rfloor}
\newcommand{\lc}{\left\lceil}
\newcommand{\rc}{\right\rceil}
\newcommand{\lm}{\left|}
\renewcommand{\rm}{\right|}

\renewcommand{\ln}{\left\|}
\newcommand{\rn}{\right\|}

\newcommand{\rno}{\right.}

\definecolor{gris}{gray}{0.7}


\newtheorem{defi}{Definition}
\newtheorem{prop}{Proposition}
\newtheorem{thm}[prop]{Theorem}
\newtheorem{lem}[prop]{Lemma}
\newtheorem{corol}[prop]{Corollary}
\newenvironment{pre}{\paragraph{Proof.}}{\hspace*{\fill} $\square$ \hspace{-2.45mm}  \vspace{3mm}}
\newenvironment{preuv}[1]{\paragraph{Proof #1.}}{\hspace*{\fill} $\square$ \hspace{-2.45mm} \vspace{3mm}}
\newenvironment{proof}{\paragraph{Proof.}}{\vspace{3mm}}
\newcommand{\qed}[2]{\hspace*{\fill} \raisebox{-#1mm}{$\square$} \hspace{-2.45mm}\vspace*{-#2mm}}
\newenvironment{rem}{\paragraph{Remark.}}{\vspace{3mm}}

\title{Scaling Limits for Random Quadrangulations of Positive Genus}
\author{J\'er\'emie \textsc{Bettinelli}\\
	\small Laboratoire de Math\'ematiques, Universit\'e Paris-Sud 11\\
	\small F-91405 Orsay Cedex\\
	\small\url{jeremie.bettinelli@normalesup.org}\\
	\small\url{http://www.normalesup.org/~bettinel}}

\begin{document}
\maketitle
\renewcommand{\labelitemi}{$\diamond$}

\begin{abstract}
We discuss scaling limits of large bipartite quadrangulations of positive genus. For a given $g$, we consider, for every $n \ge 1$, a random quadrangulation $\q_n$ uniformly distributed over the set of all rooted bipartite quadrangulations of genus $g$ with $n$ faces. We view it as a metric space by endowing its set of vertices with the graph distance. We show that, as $n$ tends to infinity, this metric space, with distances rescaled by the factor $n^{-1/4}$, converges in distribution, at least along some subsequence, toward a limiting random metric space. This convergence holds in the sense of the Gromov-Hausdorff topology on compact metric spaces. We show that, regardless of the choice of the subsequence, the Hausdorff dimension of the limiting space is almost surely equal to $4$.

Our main tool is a bijection introduced by Chapuy, Marcus, and Schaeffer between the quadrangulations we consider and objects they call well-labeled $g$-trees. An important part of our study consists in determining the scaling limits of the latter.

\paragraph{Key words:} random map, random tree, conditioned process, Gromov topology.

\paragraph{AMS 2000 Subject Classification:} 60F17.

\bigskip
\noindent
Submitted to EJP on February 18, 2010, final version accepted September 19, 2010.
\end{abstract}


\section{Introduction}

\subsection{Motivation}

The aim of the present work is to investigate scaling limits for random maps of arbitrary genus. Recall that a map is a cellular embedding of a finite graph (possibly with multiple edges and loops) into a compact connected orientable surface without boundary, considered up to orientation-preserving homeomorphisms. By \textit{cellular}, we mean that the faces of the map---the connected components of the complement of edges---are all homeomorphic to discs. The genus of the map is defined as the genus of the surface into which it is embedded. For technical reasons, it will be convenient to deal with rooted maps, meaning that one of the half-edges---or oriented edges---is distinguished.

We will particularly focus on bipartite quadrangulations: a map is a quadrangulation if all its faces have degree $4$; it is bipartite if each vertex can be colored in black or white, in such a way that no edge links two vertices that have the same color. Although in genus $g=0$, all quadrangulations are bipartite, this is no longer true in positive genus $g \ge 1$.

A natural way to generate a large random bipartite quadrangulation of genus $g$ is to choose it uniformly at random from the set $\Q_n$ of all rooted bipartite quadrangulations of genus $g$ with~$n$ faces, and then consider the limit as $n$ goes to infinity. From this point of view, the planar case---that is $g=0$---has largely been studied for the last decade. Using bijective approaches developed by Cori and Vauquelin \cite{cori81planar} between planar quadrangulations and so-called \textit{well-labeled trees}, Chassaing and Schaeffer \cite{chassaing04rpl} exhibited a scaling limit for some functionals of a uniform random planar quadrangulation. They studied in particular the so-called \textit{profile} of the map, which records the number of vertices located at every possible distance from the root, as well as its radius, defined as the maximal distance from the root to a vertex. They showed that the distances in the map are of order $n^{1/4}$ and that these two objects, once the distances are rescaled by the factor $n^{-1/4}$, admit a limit in distribution.

Marckert and Mokkadem \cite{marckert06limit} addressed the problem of convergence of quadrangulations as a whole, considering them as metric spaces endowed with their graph distance. They constructed a limiting space and showed that the discrete spaces converged toward it in a certain sense. The natural question of convergence in the sense of the Gromov-Hausdorff topology \cite{gromov99msr} remained, however, open. It is believed that the scaling limit of a uniform random planar quadrangulation exists in that sense. An important step toward this result has been made by Le Gall \cite{legall07tss} who showed the tightness of the laws of these metric spaces, and that every possible limiting space---commonly called Brownian map, in reference to Marckert and Mokkadem's terminology---is in fact almost surely of Hausdorff dimension $4$. He also proved, together with Paulin \cite{legall08slb}, that every Brownian map is almost surely homeomorphic to the two-dimensional sphere. Miermont \cite{miermont08sphericity} later gave a variant proof of this fact.

In positive genus, Chapuy, Marcus, and Schaeffer \cite{chapuy07brm} extended the bijective approaches known for the planar case, leading Chapuy \cite{chapuy08sum} to establish the convergence of the rescaled profile of a uniform random bipartite quadrangulation of fixed genus. A different approach consists in using Boltzmann measures. The number of faces is then random: a quadrangulation is chosen with a probability proportional to a certain fixed weight raised to the power of its number of faces. Conditionally given the number of faces, a quadrangulation chosen according to this probability is then uniform. Miermont \cite{miermont09trm} showed the relative compactness of a family of these measures, adapted in the right scaling, as well as the uniqueness of typical geodesics in the limiting spaces.

The present work generalizes a part of the above results to any positive genus: we will show the tightness of the laws of rescaled uniform random bipartite quadrangulations of genus~$g$ with~$n$ faces in the sense of the Gromov-Hausdorff topology. These results may be seen as a conditioned version of some of Miermont's results appearing in \cite{miermont09trm}. We will also prove that the Hausdorff dimension of every possible limiting space is almost surely $4$.

\subsection{Main results}

We will work in fixed genus $g$. On the whole, we will not let it figure in the notations, in order to lighten them. As the case $g=0$ has already been studied, we suppose $g \ge 1$.

We use the classic formalism for maps, which we briefly remind here. For any map $\m$, we denote by $V(\m)$ and $E(\m)$ respectively its sets of vertices and edges. We also call $\vec E(\m)$ its set of half-edges. By convention, we will note $\e_*\in \vec E(\m)$ the root of $\m$. For any half-edge $\e$, we write $\bar\e$ its reverse---so that $E(\m) = \{ \{ \e,\bar\e \}\,:\, \e\in\vec E \}$---as well as $\e_-$ and $\e_+$ its origin and end. Finally, we say that $\ori E(\m) \subset \vec E(\m)$ is an orientation of the half-edges if for every edge $\{ \e,\bar\e \} \in E(\m)$ exactly one of $\e$ or $\bar\e$ belongs to $\ori E(\m)$.

Recall that the Gromov-Hausdorff distance between two compact metric spaces $(\S,\delta)$ and $(\S',\delta')$ is defined by
$$\dgh\lp(\S,\delta),(\S',\delta')\rp \de \inf \big\{ d_{Haus}\big(\varphi(\S),\varphi'(\S')\big)\!\big\},$$
where the infimum is taken over all embeddings $\varphi : \S \to \S''$ and $\varphi':\S'\to \S''$ of $\S$ and $\S'$ into the same metric space $(\S'', \delta'')$, and $d_{Haus}$ stands for the usual Hausdorff distance between compact subsets of $\S''$. This defines a metric on the set of isometry classes of compact metric spaces (\cite[Theorem 7.3.30]{burago01cmg}), making it a Polish space\footnote{This is a simple consequence of Gromov's compactness theorem \cite[Theorem 7.4.15]{burago01cmg}.}.

Any map $\m$ possesses a natural graph metric $d_\m$: for any $a,b\in V(\m)$, the distance $d_\m(a,b)$ is defined as the number of edges of any shortest path linking $a$ to $b$. Our main result is the following.

\begin{thm}\label{cvq}
Let $\q_n$ be uniformly distributed over the set $\Q_n$ of all bipartite quadrangulations of genus $g$ with $n$ faces. Then, from any increasing sequence of integers, we may extract a subsequence $(n_k)_{k\ge 0}$ such that there exists a random metric space $(\q_\infty,d_\infty)$ satisfying
$$\Big( V(\q_{n_k}),\frac 1 {\gamma n_k^{1/4}}\, d_{\q_{n_k}} \Big) \tolk (\q_\infty,d_\infty)$$
in the sense of the Gromov-Hausdorff topology, where
$$\gamma \de \lp\frac {8}9 \rp^{\frac 14}.$$

Moreover, the Hausdorff dimension of the limit space $(\q_\infty,d_\infty)$ is almost surely equal to~$4$, regardless of the choice of the sequence of integers.
\end{thm}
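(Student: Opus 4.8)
The strategy is to transfer the problem, via the Chapuy--Marcus--Schaeffer bijection, from quadrangulations to well-labeled $g$-trees, and then run a program parallel to the one Le Gall carried out in the planar case \cite{legall07tss}. First, I would recall the CMS bijection: a rooted bipartite quadrangulation $\q_n$ of genus $g$ with $n$ faces corresponds to a well-labeled $g$-tree with $n$ edges, that is, a one-face map $\t_n$ of genus $g$ with a labeling of its vertices by integers that varies by at most $1$ along each edge, together with a sign. Under this bijection, the labels essentially encode distances to the root in $\q_n$: there is a cyclic-sequence function $\Cc$ on the contour of $\t_n$ whose minimum-shift structure recovers $d_{\q_n}$, up to the multiplicative constant $\gamma$ and an additive $O(1)$. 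The first block of the argument is to show that the rescaled encoding functions of $\t_n$ --- a finite collection of contour-type processes coming from the (finitely many) segments into which a genus-$g$ one-face map decomposes, together with the rescaled label process --- are tight, and that any subsequential limit is a nice continuous object (a ``Brownian'' $g$-tree carrying Brownian-type labels). This is where most of the work lies and, per the abstract, is the part the paper singles out; I would assume it has been established in the earlier sections of the paper.

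Granting that, the proof of the \emph{convergence} half of Theorem~\ref{cvq} proceeds as follows. Define on $V(\t_n)$ the rescaled ``pseudo-distance'' $D_n(a,b) \de \gamma^{-1} n^{-1/4}\, d_{\q_n}(a,b)$, and observe that $D_n$ is a $1$-Lipschitz function of the rescaled label process. Using the tightness of the encoding processes and an equicontinuity estimate on $D_n$ --- of the form $D_n(a,b) \le \gamma^{-1} n^{-1/4}\big(\Cc(a) + \Cc(b) - 2\min_{[a,b]}\Cc\big)$, controlled by the modulus of continuity of the label process --- one gets that the laws of $(V(\q_n), D_n)$ are tight for the Gromov--Hausdorff topology: cover $V(\t_n)$ by finitely many $D_n$-balls of small radius by covering the contour by finitely many arcs on which $\Cc$ oscillates little. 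By Gromov's compactness theorem, from any increasing sequence of integers we extract a subsequence $(n_k)$ along which $(V(\q_{n_k}), D_{n_k})$ converges in distribution to some random compact metric space $(\q_\infty, d_\infty)$; a standard abstract-nonsense argument (Skorokhod representation plus passing the Lipschitz/triangle-type bounds to the limit) shows that $d_\infty$ is a genuine metric, i.e.\ does not degenerate. This gives the first assertion.

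For the Hausdorff dimension, I would prove the two bounds $\dH \q_\infty \le 4$ and $\dH \q_\infty \ge 4$ separately, almost surely and independently of the subsequence. The upper bound follows from a Hölder-continuity estimate: the canonical projection from the limiting Brownian $g$-tree (a space of Hausdorff dimension $2$, being a finite union of rescaled CRT-like pieces) onto $(\q_\infty, d_\infty)$ is, by the uniform Hölder continuity of the label process with any exponent $<1/2$ that survives in the limit, Hölder of any exponent $<1/2$; hence $\dH \q_\infty \le 2 / (1/2) = 4$. For the lower bound, the standard route is a uniform volume estimate: one shows that there is a constant $C$ such that for all $\delta>0$ small, the measure of a $d_\infty$-ball of radius $\delta$ is at most $C\delta^{4-\eps}$ with high enough probability, where the measure on $\q_\infty$ is the pushforward of the natural ``uniform'' mass on the limiting tree; by the mass-distribution principle this forces $\dH \q_\infty \ge 4-\eps$, and letting $\eps \to 0$ gives the claim. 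At the discrete level this volume bound amounts to the statement that, with overwhelming probability, no ball of radius $\delta\, \gamma n^{1/4}$ in $\q_n$ contains more than $n\,\delta^{4-\eps}$ vertices --- a uniform estimate on the profile / on the number of vertices with label close to a given value inside a contour subinterval.

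**Main obstacle.** Assuming the scaling-limit analysis of well-labeled $g$-trees (tightness and characterization of subsequential limits of the encoding and label processes) is available from the earlier part of the paper, the delicate point is the Hausdorff-dimension lower bound: one needs a uniform upper bound on the volume of small balls in $\q_n$ that is robust enough to pass to \emph{every} subsequential limit. Because the limiting space is only known along subsequences, the estimate must be quantitative and uniform in $n$ rather than phrased on the limit; the genus-$g$ topology complicates the combinatorial counting of vertices at a given label level inside a contour arc relative to the planar case, since the one-face map is no longer a plane tree and one must handle the finitely many ``chords'' gluing the segments together. Getting the exponent $4$ (rather than something weaker) out of this uniform estimate is the technical heart of the argument.
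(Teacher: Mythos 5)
Your plan for the convergence half is essentially the paper's: code $(\q_n,v_\bullet)$ by a well-labeled $g$-tree via the Chapuy--Marcus--Schaeffer bijection, use the upper bound on $d_{\q_n}$ in terms of the label process to get tightness, extract a subsequence, and realize the limit as a quotient of the contour interval. (The paper proves tightness at the level of the two-parameter process $d_{(n)}$ in $\C([0,1]^2,\R)$ and then deduces Gromov--Hausdorff convergence via correspondences, rather than invoking Gromov's compactness theorem for the GH topology directly, but this is the same idea; note that explicit access to the parametrization by $[0,1]$ is needed anyway for the dimension bounds.) The H\"older upper bound on the dimension is also the same in spirit, though the paper projects directly from $([0,1],|\cdot|)$ with exponent $\alpha<1/4$ instead of factoring through a two-dimensional continuum tree with exponent $<1/2$; the former avoids having to identify the limiting $g$-tree and compute its dimension.

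The genuine gap is in the lower bound on the Hausdorff dimension. You propose a \emph{uniform} discrete volume estimate (with overwhelming probability no ball of radius $\delta\gamma n^{1/4}$ in $\q_n$ contains more than $n\delta^{4-\eps}$ vertices), you correctly flag it as the technical heart, and you do not prove it. This is not the paper's route, and no uniform estimate is needed. The missing ingredient is a \emph{lower} bound on distances coming from the bijection: if $\lin_n(i,j)$ denotes the ancestral lineage of $\rr\t_n(i)$ between contour times $i$ and $j$, then $d_n(i,j)\ge \L_n(i)-\min_{\lin_n(i,j)}\L_n$, because a geodesic from $\rr\q_n(i)$ to $\rr\q_n(j)$ must leave the component of $\t_n$ containing $\rr\t_n(i)$ through an arc incident to a lineage vertex. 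Passing to the limit gives $d_\infty(s,t)\ge \L_\infty(s)-\min_{\lin_\infty(s,t)}\L_\infty$ for \emph{every} subsequential limit $d_\infty$, since the right-hand side involves only $\L_\infty$ and $\CC_\infty$, which converge along the full sequence. The rest is continuum analysis: the labels read along an ancestral line form a Brownian motion, so the law of the iterated logarithm (applied to it and to the contour process) yields, for each fixed non-junction $s$, almost surely $\lambda\big(B_\infty(\pi_\infty(s),\delta)\big)\le 2\,\delta^{4-\eta'}$ for small $\delta$; Fubini upgrades this to $\lambda$-almost every point, and Federer's density theorem for Hausdorff measures needs only this a.e.\ statement. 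Without the ancestral-lineage lower bound (or the uniform discrete estimate you defer), the lower bound $\dH(\q_\infty,d_\infty)\ge 4$ does not follow from what you have written.
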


The limiting spaces $(\q_\infty,d_\infty)$ appearing in Theorem~\ref{cvq} are expected to have similar properties as in the case $g=0$. For instance, they are expected to have the same topology as the torus with~$g$ holes, and to possess the property of uniqueness of their geodesic paths. In an upcoming work, we will show that the topology is indeed that of the $g$-torus.

\bigskip

We call \textit{$g$-tree} a map of genus $g$ with only one face. This generalizes the notion of tree: note that a $0$-tree is merely a (plane) tree. In order to show Theorem~\ref{cvq}, we will code quadrangulations by $g$-trees via a bijection introduced by Chapuy, Marcus, and Schaeffer \cite{chapuy07brm}, which we expose in Section~\ref{seccms}. We then study the scaling limits of $g$-trees: we first decompose them in Section~\ref{decomp} and study their convergence in Section~\ref{seccv} and~\ref{secmf}. Finally, Section~\ref{secpro} is dedicated to the proof of Theorem~\ref{cvq}.

\bigskip

Along the way, we will recover an asymptotic expression, already known from \cite{chapuy07brm}, for the cardinality of the set $\Q_n$ of all rooted bipartite quadrangulations of genus~$g$ with~$n$ faces. Following \cite{chapuy07brm}, we call \textit{dominant scheme} a $g$-tree whose vertices all have degree exactly~$3$. We write $\Sg^*$ the (finite) set of all dominant schemes of genus~$g$. It is a well-known fact that there exists a constant $t_g$ (only depending on~$g$) such that $|\Q_n|\sim t_g\, n^{\frac 52 (g-1)}\,12^n$ (see for example \cite{bender91number, chapuy07brm, miermont09trm}). This constant plays an important part in enumeration of many classes of maps \cite{bender91number,gao93number} .

\begin{thm}[\cite{chapuy07brm}]\label{tgprop}
The following expression holds
\begin{equation}\label{tg}
t_g = \frac{3^g}{2^{11g-7} \, (6g-3) \Gamma \lp \frac{5g-3}2 \rp}\, \sum_{\s\in\Sg^*} \sum_{\lambda\in \O_\s} \prod_{i=1}^{4g-3} \frac 1 {d(\lambda,i)},
\end{equation}
where the second sum is taken over all $(4g-2)!$ orderings $\lambda$ of the vertices of a dominant scheme $\s\in \Sg^*$, i.e. bijections from $\ent 0 {4g-3}$ onto $V(\s)$, and
\begin{equation}\label{dlk}
d(\lambda, k) \de \lm \lb \e\in\vec E(\s),\ \lambda^{-1}_{\e^-} < k \le  \lambda^{-1}_{\e^+} \rb \rm.
\end{equation}
\end{thm}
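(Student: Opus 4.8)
The plan is to obtain \eqref{tg} as a by-product of the combinatorial analysis of $g$-trees, by carefully tracking all multiplicative constants through the Chapuy--Marcus--Schaeffer bijection and the decomposition of $g$-trees. By the CMS bijection of Section~\ref{seccms}, the cardinality $|\Q_n|$ is, up to an explicit elementary factor encoding the pointing/rooting conventions (recall that a genus-$g$ quadrangulation with $n$ faces has $n+2-2g$ vertices), the number $L_n$ of well-labeled $g$-trees with $n$ edges; so it suffices to prove $L_n\asymp c\,n^{(5g-1)/2}\,12^n$ with $c$ explicit, i.e. that the associated series behaves like a constant times $(1-12z)^{-(5g-1)/2}$ near its dominant singularity $z=1/12$. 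Using the decomposition of Section~\ref{decomp}, every $g$-tree is reconstructed from its \emph{scheme} $\s$ --- the genus-$g$, one-faced multigraph with all vertices of degree $\ge 3$ obtained by pruning pendant subtrees and smoothing degree-two vertices --- by choosing a positive length $\ell_e$ for each $e\in E(\s)$, grafting a well-labeled planar forest along the resulting paths, and assigning an integer label to each vertex of $\s$ so that the labels along each path form a discrete bridge between the endpoint labels. Euler's relation gives $|E(\s)|-|V(\s)|=2g-1$ and $2|E(\s)|\ge 3|V(\s)|$, whence $|V(\s)|\le 4g-2$ and $|E(\s)|\le 6g-3$, with equality exactly for the dominant schemes; a count using the generating series $W$ of well-labeled planar forests (which has a square-root singularity at $1/12$, producing the $12^n$) shows that the singular exponent contributed by a scheme is strictly increasing in $|E(\s)|$, so only dominant schemes contribute at leading order.

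For a fixed dominant scheme $\s$, I would write its contribution as a sum over the length vector $(\ell_e)_{e\in E(\s)}$, over the labels $(x_v)_{v\in V(\s)}$ modulo a global shift, and, for each edge, over the discrete bridges of length $\ell_e$ between the prescribed endpoint labels, each weighted by $W$. Since $zW(z)$ times the number of admissible label-steps is critical (equals $1$) at $z=1/12$, the geometric sum over $\ell_e$ produces a factor $\asymp(1-12z)^{-1/2}$ per edge of $\s$; a local limit theorem for the bridges supplies the accompanying Gaussian factors in the endpoint labels together with $\ell_e^{-1/2}$ corrections, and the lattice sum over $(x_v)$ supplies the remaining polynomial factor. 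Extracting coefficients and rescaling $\ell_e\sim\sqrt n$, $x_v\sim n^{1/4}$ then expresses the leading constant of $L_n$ as an explicit numerical prefactor (collecting the square-root constant of $W$, the step weights, and the rooting conventions) times an integral over the simplex $\{\sum_e s_e=1\}$ of normalized edge-lengths and over $\R^{|V(\s)|-1}$ of a Gaussian density built from the weighted graph Laplacian of $\s$ with conductances $1/s_e$.

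It then remains to evaluate that integral. The Gaussian integral over the scheme-vertex labels is proportional to the inverse square root of the determinant of the reduced weighted Laplacian of $\s$; expanding this determinant combinatorially and carrying out the remaining integral over the length simplex turns the contribution of $\s$ into $\sum_{\lambda\in\O_\s}\prod_{i=1}^{4g-3}d(\lambda,i)^{-1}$, the ordering $\lambda$ recording the order in which the vertices of $\s$ are incorporated and $d(\lambda,i)$ the size of the corresponding edge-cut of $\s$, exactly as in \eqref{dlk}. The $\Gamma\big(\tfrac{5g-3}{2}\big)$ arises as a Gamma integral of index $\tfrac{5g-3}{2}$ from the residual scale integral over the total length (the index being dictated by the $10g-7$ real degrees of freedom of the scaling picture), while the factors $3^g$, $2^{-(11g-7)}$ and $(6g-3)^{-1}$ collect the remaining numerical constants together with the $|E(\s)|$-fold ambiguity in placing the root edge. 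Summing over $\s\in\Sg^*$ and dividing by $n+2-2g$ to pass from pointed to rooted quadrangulations yields \eqref{tg}.

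The conceptual architecture --- CMS bijection, scheme decomposition, singularity analysis, local limit theorem --- is routine, so the main obstacle is entirely one of bookkeeping: getting every multiplicative constant right, namely the precise singular expansion of $W$, the exact pointing/rooting factor of the bijection, the automorphism and relabeling factors of the schemes, and above all recognizing the Gaussian-over-Laplacian integral, \emph{with its correct power of $n$}, as precisely the combinatorial quantity $\sum_{\lambda}\prod_i d(\lambda,i)^{-1}$ and pinning down $\Gamma\big(\tfrac{5g-3}{2}\big)$. A useful consistency check is the case $g=1$: there $\Sg^*$ reduces to the single scheme consisting of two vertices joined by three edges, $(4g-2)!=2$, each ordering gives $\prod_i d(\lambda,i)^{-1}=1/3$, and one recovers the expected form $|\Q_n|\sim t_1\,12^n$.
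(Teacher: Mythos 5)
Your high-level architecture (CMS bijection, scheme decomposition, only dominant schemes surviving at the leading singularity or scaling order) is the same as the paper's, but the paper does not use generating-function singularity analysis: it reuses the probabilistic machinery of Proposition~\ref{cvint} (the local limit theorem, which already produced the asymptotic \eqref{tg2}) to reduce everything to evaluating the explicit integral $\Upsilon$ in \eqref{upsi}. It then integrates the variables in a specific order --- $u$ trivially (producing a factor $m^{\e_*}$), then the $m^\e$'s via the Gaussian convolution identity of Lemma~\ref{lemag}, $\int_0^t \ga_{t-m}(a)\lp-\ga'_m(b)\rp\,dm=\ga_t(a+b)$, then the $\sigma^\e$'s via the iterated antiderivatives $\ga^{[n]}$ defined in \eqref{pn}, and only \emph{last} the $l^v$'s. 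At that point the integrand is a function of $\sum_{\e\in\ori E(\s)}|l^\e|$ alone, and the orderings $\lambda$ appear because the domain $\R^{V(\s)\setminus\{\e_*^-\}}$ is partitioned into chambers $\{l^{\lambda_0}<\dots<l^{\lambda_{4g-3}}\}$ on which $\sum_\e|l^\e|=\sum_i d(\lambda,i)\,(l^{\lambda_i}-l^{\lambda_{i-1}})$ is an honest linear form with nonnegative coefficients; the $1/d(\lambda,i)$'s then fall out of $4g-3$ one-dimensional integrations, and a separate averaging over rootings of the unrooted scheme handles the $|l^{\e_*}|$ term and produces the $(6g-3)^{-1}$.

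Your proposal instead integrates the Gaussian over $(l^v)$ \emph{first}, obtaining $(\det\mathcal L(\sigma))^{-1/2}$ for a reduced weighted Laplacian with conductances $1/\sigma^\e$, and then asserts that "expanding this determinant combinatorially" and integrating over the $\sigma$-simplex yields $\sum_{\lambda}\prod_i d(\lambda,i)^{-1}$. This is the gap. The natural expansion of a reduced weighted Laplacian is the Matrix--Tree theorem, a sum over \emph{spanning trees} of $\s$, not over \emph{vertex orderings} with cut-statistics $d(\lambda,\cdot)$; you give no argument converting one into the other, and the resulting $\sigma$-simplex integral of an algebraic function is not obviously tractable. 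The paper never forms a determinant --- the whole point of the Lemma~\ref{lemag} / $\ga^{[n]}$ pipeline is to eliminate $m$ and $\sigma$ before touching $l$, precisely so that the $l$-integral becomes a piecewise-linear-argument integral that decomposes over orderings. Absent a worked derivation of the ordering sum from the Laplacian determinant (and of the root-asymmetry correction), the proposal identifies the right objects but does not establish the formula.
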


As the proof of this expression is more technical, we postpone it to the last section. By convention, we will suppose that all the random variables we consider are defined on a common probability space $(\Omega,\mathcal F,\P)$.


\section{The Chapuy-Marcus-Schaeffer bijection}\label{seccms}

The first main tool we use consists in the Chapuy-Marcus-Schaeffer bijection \cite[Corollary~2 to Theorem~1]{chapuy07brm}, which allows us to code (rooted) quadrangulations by so-called well-labeled (rooted) $g$-trees.

It may be convenient to represent a $g$-tree $\t$ with $n$ edges by a $2n$-gon whose edges are pairwise identified (see Figure \ref{gon}). We note $\e_1\de \e_*$, $\e_2$, \dots, $\e_{2n}$ the half-edges of $\t$ sorted according to the clockwise order around this $2n$-gon. The half-edges are said to be sorted according to the \textbf{facial order} of $\t$. Informally, for $2 \le i \le 2n$, $\e_i$ is the ``first half-edge to the left after $\e_{i-1}$.'' We call \textbf{facial sequence} of $\t$ the sequence $\t(0)$, $\t(1)$, \dots, $\t(2n)$ defined by $\t(0)=\t(2n) = \e_1^- = \e_{2n}^+$ and for $1 \le i \le 2n-1$, $\t(i)=\e_i^+=\e_{i+1}^-$. Imagine a fly flying along the boundary of the unique face of $\t$. Let it start at time $0$ by following the root $\e_*$ and let it take one unit of time to follow each half-edge, then $\t(i)$ is the vertex where the fly is at time~$i$.

\tfig{gon}{On the left, the facial order and facial sequence of a $g$-tree. On the right, its representation as a polygon whose edges are pairwise identified.}{
\psfrag{0}[][][.62]{$\e_{10}$}
\psfrag{1}[][][.62]{$\e_1$}
\psfrag{2}[][][.62]{$\e_2$}
\psfrag{3}[][][.62]{$\e_3$}
\psfrag{4}[][][.62]{$\e_4$}
\psfrag{5}[][][.62]{$\e_5$}
\psfrag{6}[][][.62]{$\e_6$}
\psfrag{7}[][][.62]{$\e_7$}
\psfrag{8}[][][.62]{$\e_8$}
\psfrag{9}[][][.62]{$\e_9$}
\psfrag{w}[l][l][.8]{$\t(0)$, $\t(2)$}
\psfrag{x}[r][r][.8]{$\t(1)$}
\psfrag{c}[][][.8]{$\t(3)$}
\psfrag{v}[r][r][.8]{$\t(4)$, $\t(7)$, $\t(9)$}
\psfrag{h}[][][.8]{$\t(5)$}
\psfrag{n}[][][.8]{$\t(6)$}
\psfrag{j}[l][l][.8]{$\t(8)$}
}

Let $\t$ be a $g$-tree. The two vertices $u,v\in V(\t)$ are said to be \textbf{neighbors}, and we write $u\sim v$, if there is an edge linking them.

\begin{defi}
A \bf{well-labeled $g$-tree} is a pair $(\t,\l)$ where $\t$ is a $g$-tree and $\l:V(\t) \to \Z$ is a function (thereafter called \textbf{labeling function}) satisfying:
\begin{enumerate}[i.]
 \item $\l(\e_*^-) = 0$, where $\e_*$ is the root of $\t$,
 \item if $u \sim v$, then $|\l(u) - \l(v)| \le 1$.
\end{enumerate}
\end{defi}

We call $\T_n$ the set of all well-labeled $g$-trees with $n$ edges.

\bigskip

A \textbf{pointed quadrangulation} is a pair $(\q,v_\bullet)$ consisting in a quadrangulation $\q$ together with a distinguished vertex $v_\bullet \in V(\q)$. We call
$$\Qb_n \de \lb (\q,v_\bullet):\, \q\in \Q_n,\, v_\bullet \in V(\q) \rb$$
the set of all pointed bipartite quadrangulations of genus $g$ with $n$ faces.

\bigskip

The Chapuy-Marcus-Schaeffer bijection is a bijection between the sets $\T_n \times \{-1,+1\}$ and $\Qb_n$. As a result, because every quadrangulation $\q \in \Q_n$ has exactly $n+2-2g$ vertices, we obtain the relation
\begin{equation}\label{qt}
(n+2-2g)\,|\Q_n|=2 \, |\T_n|.
\end{equation}

Let us now briefly describe the mapping from $\T_n \times \{-1,+1\}$ onto $\Qb_n$. We refer the reader to \cite{chapuy07brm} for a more precise description. Let $(\t,\l) \in \T_n$ be a well-labeled $g$-tree with $n$ edges and $\eps\in \{-1,+1\}$. As above, we write $\t(0)$, $\t(1)$, \dots, $\t(2n)$ its facial sequence. The pointed quadrangulation $(\q,v_\bullet)$ corresponding to $((\t,\l),\eps)$ is then constructed as follows. First, shift all the labels in such a way that the minimal label is~$1$. Let us call $\tilde\l \de \l-\min\l +1$ this shifted labeling function. Then, add an extra vertex $v_\bullet$ carrying the label $\tilde\l(v_\bullet)\de 0$ inside the only face of $\t$. Finally, following the facial sequence, for every $0\le i \le 2n-1$, draw an arc---without crossing any edge of $\t$ or arc already drawn---between $\t(i)$ and its successor, defined as follows:
\begin{itemize}
	\item if $\tilde\l(\t(i))=1$, then its successor is the extra vertex $v_\bullet$,
	\item if $\tilde\l(\t(i))\ge 2$, then its successor is the first following vertex whose shifted label is $\tilde\l(\t(i)) - 1$, that is $\t(j)$, where
	$$j=\lb \begin{array}{cl}
		\inf\{k\ge i\,:\, \tilde\l(\t(k))= \tilde\l(\t(i)) - 1\} &\text{ if } \{k\ge i\,:\, \tilde\l(\t(k))= \tilde\l(\t(i)) - 1\} \neq \varnothing,\\
		\inf\{k\ge 1\,:\, \tilde\l(\t(k))= \tilde\l(\t(i)) - 1\} &\text{ otherwise}.
		\end{array}\rno$$
\end{itemize}

The quadrangulation $\q$ is then defined as the map whose set of vertices is $V(\t) \cup \{v_\bullet\}$, whose edges are the arcs we drew and whose root is the first arc drawn, oriented \textit{from} $\t(0)$ if $\eps=-1$ or \textit{toward} $\t(0)$ if $\eps=+1$ (see Figure \ref{cms}).

\tfig{cms}{The Chapuy-Marcus-Schaeffer bijection. In this example, $\eps = 1$. On the bottom-left picture, the vertex $v_\bullet$ has a thicker (red) borderline.}{
\psfrag{t}[][]{$(\t,\l)$}
\psfrag{u}[][]{$(\t,\tilde\l)$}
\psfrag{q}[][]{$\q$}
\psfrag{7}[][][.62]{-$1$}
\psfrag{8}[][][.62]{$0$}
\psfrag{9}[][][.62]{$1$}
\psfrag{0}[][][.62]{$2$}
\psfrag{1}[][][.62]{$3$}
\psfrag{2}[][][.62]{$4$}
\psfrag{d}[][][.62]{\textcolor{gris}{$1$}}
\psfrag{b}[][][.62]{\textcolor{gris}{$3$}}
}

Because of the way we drew the arcs of $\q$, we see that for any vertex $v\in V(\q)$, $\tilde\l(v)=d_\q(v_\bullet,v)$. When seen as a vertex in $V(\q)$, we write $\q(i)$ instead of $\t(i)$. In particular,
$$\{\q(i),0 \le i \le 2n\} = V(\q)\bs\{v_\bullet\}.$$

\bigskip

We end this section by giving an upper bound for the distance between two vertices $\q(i)$ and $\q(j)$, in terms of the labeling function $\l$:
\begin{equation}\label{lemmed}
d_\q(\q(i),\q(j)) \le \l(\t(i)) + \l(\t(j)) - 2 \max \lp \min_{ k \in \overrightarrow{\ent i j}} \l(\t(k)),\min_{ k \in \overrightarrow{\ent j i}} \l(\t(k)) \rp +2
\end{equation}
where we note, for $i\le j$, $\ent i j \de [i,j] \cap \Z = \{ i, i+1, \dots, j \}$, and
$$\overrightarrow{\ent i j} \de \lb \begin{array}{cll}
					\ent i j			&\text{ if } & i \le j,\\
					\ent i {2n} \cup \ent 0 j	&\text{ if } & j < i.
				    \end{array}\rno$$

We refer the reader to \cite[Lemma 4]{miermont09trm} for a detailed proof of this bound. The idea is the following: we consider the paths starting from $\t(i)$ and $\t(j)$ and made of the successive arcs going from vertices to their successors without crossing the $g$-tree. They are bound to meet at a vertex with label $m-1$, where
$$m \de \displaystyle\min_{ k \in \overrightarrow{\ent i j}} \l(\t(k)).$$
On Figure \ref{bound}, we see that the (red) plain path has length $\l(\t(i))-m+1$ and that the (purple) dashed one has length $\l(\t(j))-m+1$.

\tfig{bound}{Visual proof for \eqref{lemmed}. Both paths are made of arcs constructed as explained above.}{
\psfrag{i}[][][.8]{$\l(\t(i))$}
\psfrag{j}[][][.8]{$\l(\t(j))$}
\psfrag{u}[l][l][.8]{$\l(\t(i))-1$}
\psfrag{v}[][][.8]{$\l(\t(j))-1$}
\psfrag{m}[][][.8]{$m$}
\psfrag{1}[l][l][.8]{$m-1$}
}


\section{Decomposition of a $g$-tree}\label{decomp}

We investigate here more closely the structure of a $g$-tree $\t$. We call \textbf{scheme} a $g$-tree with no vertices of degree $1$ or $2$. Roughly speaking, a $g$-tree is a scheme in which every half-edge is replaced by a forest.

\subsection{Forests}\label{secfor}

\subsubsection{Formal definitions}

We adapt the standard formalism for plane trees---as found in \cite{neveu86apg} for instance---to forests. Let
$$\U \de \bigcup_{n=1}^{\infty} \N^n$$
where $\N \de \{1,2,\dots\}$. If $u \in \N^n$, we write $|u| \de n$. For $u=(u_1,\dots,u_n)$, $v=(v_1,\dots,v_p) \in \U$, we let $uv \de (u_1,\dots,u_n,v_1,\dots,v_p)$ be the concatenation of $u$ and $v$. If $w=uv$ for some $u,v\in\U$, we say that $u$ is a \bf{ancestor} of $w$ and that $w$ is a \bf{descendant} of $u$. In the case where $v \in \N$, we may also use the terms \bf{parent} and \bf{child} instead.

\begin{defi}
A \bf{forest} is a finite subset $\f \subset \U$ satisfying:
\begin{enumerate}[i.]
 \item there is an integer $t(\f) \ge 1$ such that $\f\cap \N = \ent 1 {t(\f)+1}$,
 \item if $u \in \f$, $|u| \ge 2$, then its parent belongs to $\f$,
 \item for every $u \in \f$, there is an integer $c_u(\f) \ge 0$ such that $ui \in \f$ \iff $1 \le i \le c_u(\f)$,
 \item $c_{t(\f)+1}(\f) =0$.
\end{enumerate}

The integer $t(\f)$ encountered in $i.$ and $iv.$ is called the \textbf{number of trees} of $\f$.
\end{defi}

We will see in a moment why we require $t(\f)+1$ to lie in $\f$. For $u=(u_1,\dots,u_p) \in \f$, we call $\a(u) \de u_1$ its oldest ancestor. A \textbf{tree} of~$\f$ is a level set for $\a$: for $1\le j \le t(\f)$, the $j$-th tree of~$\f$ is the set $\{u\in \f\,:\, \a(u)=j\}$. The integer $\a(u)$ hence
records which tree $u$ belongs to.
We call $\f\cap \N = \lb \a(u),\, u \in \f \rb$ the \bf{floor} of the forest $\f$.

\bigskip

For $u,v \in \f$, we write $u \sim v$ if either
\begin{itemize}
 \item $u$ is a parent or child of $v$, or
 \item $u,v \in \N$ and $|u-v| =1$.
\end{itemize}

It is convenient, when representing a forest, to draw edges between parents and their children, as well as between $i$ and $i+1$, for $i=1,2,\dots,t(\f)$, that is between $u$'s and $v$'s such that $u \sim v$ (see Figure \ref{wlforest}). We say that an edge drawn between a parent and its child is a \bf{tree edge} whereas an edge drawn between an $i$ and an $i+1$ will be called a \bf{floor edge}.

We call $\F_\sigma^m$ the set of all forests with $\sigma$ trees and $m$ tree edges, that is
$$\F_\sigma^m \de \lb \f\,:\, t(\f)=\sigma,\,|\f|= m+\sigma +1 \rb.$$

\begin{defi}
A \bf{well-labeled forest} is a pair $(\f,\l)$ where $\f$ is a forest and $\l:\f \to \Z$ is a function satisfying:
\begin{enumerate}[i.]
 \item for all $u\in \f \cap \N$, $\l(u) = 0$,
 \item if $u \sim v$, $|\l(u) - \l(v)| \le 1$.
\end{enumerate}
\end{defi}

Let
$$\fF_\sigma^m \de \lb (\f,\l):\, \f \in \F_\sigma^m \rb$$
be the set of well-labeled forests with $\sigma$ trees and $m$ tree edges.

\tfig{wlforest}{An example of well-labeled forest from $\fF_7^{20}$.}{
\psfrag{1}[][][.8]{$1$}
\psfrag{2}[][][.8]{$2$}
\psfrag{3}[][][.8]{$3$}
\psfrag{4}[][][.8]{$4$}
\psfrag{0}[][][.8]{$0$}
\psfrag{9}[][][.8]{-$1$}
\psfrag{8}[][][.8]{-$2$}
}

\begin{rem}
For every forest in $\F_\sigma^m$, there are exactly $3^m$ admissible ways to label it: for all tree edges, one may choose any increment in $\{-1,0,1\}$. As a result, $|\fF_\sigma^m| = 3^m |\F_\sigma^m|$.
\end{rem}


\subsubsection{Encoding by contour and spatial contour functions}

There is a very convenient way to code forests and well-labeled forests. Let $\f \in \F_\sigma^m$ be a forest. Let us begin by defining its \textbf{facial sequence} $\f(0),\f(1),\dots,\f(2m+\sigma)$ as follows (see Figure \ref{dfs}): $\f(0) \de 1$, and for $0\le i \le 2m+\sigma-1$,
\begin{itemize}
 \item if $\f(i)$ has children that do not appear in the sequence $\f(0),\f(1),\dots,\f(i)$, then $\f(i+1)$ is the first of these children, that is $\f(i+1)\de \f(i)j_0$ where
 $$j_0 = \min\lb j\ge 1\,:\, \f(i)j \notin \lb \f(0),\f(1),\dots,\f(i) \rb \rb,$$
 \item otherwise, if $\f(i)$ has a parent (that is $|\f(i)| \ge 2$), then $\f(i+1)$ is this parent,
 \item if neither of these cases occur, which implies that $|\f(i)|=1$, then $\f(i+1) \de \f(i)+1$. 
\end{itemize}

\tfig{dfs}{The facial sequence associated with the well-labeled forest from Figure \ref{wlforest}.}{
\psfrag{1}[][][.8]{$1$}
\psfrag{2}[][][.8]{$2$}
\psfrag{3}[][][.8]{$3$}
\psfrag{4}[][][.8]{$4$}
\psfrag{0}[][][.8]{$0$}
\psfrag{9}[][][.8]{-$1$}
\psfrag{8}[][][.8]{-$2$}
\psfrag{a}[r][r][.8]{$\f(0)$, $\f(8)$}
\psfrag{b}[r][r][.8]{$\f(1)$, $\f(5)$, $\f(7)$}
\psfrag{c}[r][r][.8]{$\f(2)$, $\f(4)$}
\psfrag{d}[r][r][.8]{$\f(3)$}
\psfrag{e}[][][.8]{$\f(6)$}
\psfrag{f}[][][.8]{$\f(9)$}
\psfrag{g}[][][.8]{$\f(10)$}
}

It is easy to see that each tree edge is visited exactly twice---once going from the parent to the child, once going the other way around---whereas each floor edge is visited only once---from some~$i$ to $i+1$. As a result, $\f(2m+\sigma) = t(\f) + 1$.

\bigskip

The \textbf{contour function} of $\f$ is the function $C_\f:[0,2m+\sigma]\to \R_+$ defined, for $0 \le i \le 2m+\sigma$, by
$$C_\f(i) \de |\f(i)| + t(\f) -\a\lp \f(i) \rp$$
and linearly interpolated between integer values (see Figure \ref{wlf}).

We can easily check that the function $C_\f$ entirely determines the forest $\f$. We see that $C_\f$ ranges in the set of paths of a simple random walk starting from $t(\f)$ and conditioned to hit~$0$ for the first time at $2m+\sigma$. This allows us to compute the cardinality of $\F_\sigma^m$:

\begin{lem}\label{forests}
Let $\sigma \ge 1$ and $m \ge 0$ be two integers. The number of forests with $\sigma$ trees and $m$ tree edges is:
$$\big| \F_\sigma^m \big| = \frac \sigma {2m+\sigma} \ 2^{2m+\sigma} \ \P(S_{2m+\sigma}=\sigma) = \frac \sigma {2m+\sigma} \choo {2m+\sigma} m,$$
where $(S_i)_{i \ge 0}$ is a simple random walk on $\Z$.
\end{lem}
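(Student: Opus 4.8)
The plan is to exploit the bijection between forests in $\F_\sigma^m$ and certain lattice paths that was announced just before the statement: the contour function $C_\f$ of a forest $\f \in \F_\sigma^m$ is a nearest-neighbour path on $\Z$ of length $2m+\sigma$, starting at $\sigma = t(\f)$, staying nonnegative, and first hitting $0$ at time $2m+\sigma$. First I would make this correspondence precise: check that $C_\f$ has increments in $\{-1,+1\}$ (each step of the facial exploration either descends a tree edge, climbs back up a tree edge, or moves one unit to the right along the floor, and in all three cases $|\f(i)| - \a(\f(i))$ changes by $\pm 1$ once we account for the $+t(\f)$ offset), that it takes value $0$ exactly at the final time (since the exploration ends at the ``phantom'' vertex $t(\f)+1$, which is why that extra floor vertex was built into the definition), and that $\f \mapsto C_\f$ is injective with image exactly the set of such paths. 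The key counting input is then the cycle lemma (also known as the Kemperman or ballot-type formula): the number of nearest-neighbour paths of length $N$ from $\sigma$ to $0$ that stay strictly positive before time $N$ equals $\frac{\sigma}{N}$ times the total number of unconstrained nearest-neighbour paths of length $N$ from $\sigma$ to $0$.

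With $N = 2m+\sigma$, an unconstrained path from $\sigma$ to $0$ of length $N$ must take $m$ up-steps and $m+\sigma$ down-steps, so there are $\binom{2m+\sigma}{m}$ of them, and the probability that a simple random walk satisfies $S_{2m+\sigma} = \sigma$ (equivalently $S_N - S_0 = -\sigma$, by symmetry the same as $= +\sigma$) is $2^{-(2m+\sigma)}\binom{2m+\sigma}{m}$. Combining the bijection with the cycle lemma gives
$$\big|\F_\sigma^m\big| = \frac{\sigma}{2m+\sigma}\binom{2m+\sigma}{m} = \frac{\sigma}{2m+\sigma}\,2^{2m+\sigma}\,\P(S_{2m+\sigma} = \sigma),$$
which is exactly the claimed formula. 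I would also dispose of the degenerate case $m=0$ by hand (then $\F_\sigma^0$ has a single element, the forest with $\sigma$ trivial trees plus the phantom vertex, and the formula reads $\frac{\sigma}{\sigma}\binom{\sigma}{0} = 1$), so that the cycle lemma is only invoked for $m \ge 1$.

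The main obstacle is not the enumeration — the cycle lemma is standard — but rather the bookkeeping in establishing the bijection: one must verify carefully that the exploration visits each tree edge exactly twice and each floor edge exactly once (so the length is indeed $2m+\sigma$), that the ``strictly positive before time $N$'' condition matches the forest constraints (in particular that the walk does not return to $0$ prematurely, which corresponds to the floor being explored monotonically from left to right and the phantom vertex $t(\f)+1$ carrying no children), and that every admissible path arises from a unique forest. Once that structural correspondence is nailed down, the rest is the short computation above.
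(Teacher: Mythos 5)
Your proposal is correct and takes essentially the same route as the paper: encode the forest by its contour function, recognize the image as nearest-neighbour paths that first hit their terminal level at the final step, and count those with the cycle lemma. The only cosmetic difference is that the paper shifts the contour to start at $0$ and hit $-\sigma$ rather than starting at $\sigma$ and hitting $0$, and it dispatches the bijection in one phrase where you sketch the bookkeeping; the counting step is identical.
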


\begin{pre}
Shifting the contour functions, we see that $\lm \F_\sigma^m \rm$ is the number of different paths of a simple random walk starting from $0$ and conditioned to hit $-\sigma$ for the first time at $2m+\sigma$. We have
\begin{align*}
\big| \F_\sigma^m \big|  &= 2^{2m+\sigma} \, \P \lp S_{2m+\sigma}=-\sigma \,;\, \forall i \in \ent 0 {2m+\sigma-1}, \, S_i > -\sigma \rp\\
		&= \frac \sigma {2m+\sigma} \ 2^{2m+\sigma} \ \P(S_{2m+\sigma}=\sigma),
\end{align*}
where the second equality is an application of the so-called cycle lemma (see for example \cite[Lemma 2]{bertoin03ptf}). The second equality of the lemma is obtained by seeing that $S_{2m+\sigma}=\sigma$ if and only if the walk goes exactly $m+\sigma$ times up and $m$ times down.
\end{pre}

Now, if we have a well-labeled forest $(\f,\l)$, the contour function $C_\f$ enables us to recover~$\f$. To record the labels, we use the \textbf{spatial contour function} $L_{\f,\l}:[0,2m+\sigma]\to \R$ defined, for $0 \le i \le 2m+\sigma$, by
$$L_{\f,\l}(i) \de \l(\f(i))$$
and linearly interpolated between integer values (see Figure \ref{wlf}). The \textbf{contour pair} $(C_\f,L_{\f,\l})$ then entirely determines $(\f,\l)$.

\tfig{wlf}{The contour pair of the well-labeled forest appearing in Figures \ref{wlforest} and \ref{dfs}. The paths are dashed on the intervals corresponding to floor edges.}{
\psfrag{C}[][]{\textcolor{red}{$C_{\f}$}}
\psfrag{L}[][]{\textcolor{blue}{$L_{\f,\l}$}}
}


\subsection{Scheme}

\subsubsection{Extraction of the scheme out of a $g$-tree}

\begin{defi}
We call \bf{scheme} of genus $g$ a $g$-tree with no vertices of degree one or two. A scheme is said to be \bf{dominant} when it only has vertices of degree exactly three.
\end{defi}

\begin{rem}
The Euler characteristic formula easily shows that the number of schemes of genus~$g$ is finite. We call $\Sg$ the set of all schemes of genus $g$ and $\Sg^*$ the set of all dominant schemes of genus~$g$.
\end{rem}

It was explained in \cite{chapuy07brm} how to extract the scheme out of a $g$-tree $\t$. Let us recall now this operation. By iteratively deleting all its vertices of degree $1$, we are left with a---non-necessarily rooted---$g$-tree. If the root has been removed, we root this new $g$-tree on the first remaining half-edge following the actual root in the facial order of $\t$.

The vertices of degree $2$ in the new $g$-tree are organized into maximal chains connected together at vertices of degree at least $3$. We replace each of these maximal chains by a single new edge. The edge replacing the chain containing the root is chosen to be the final root (with the same orientation).

By construction, the map $\s$ we obtain is a scheme of genus $g$, which we call the scheme of the $g$-tree~$\t$. The vertices of $\t$ that remain vertices in the scheme $\s$ are called the \bf{nodes} of~$\t$. See Figure~\ref{scheme}.

\tfig{scheme}{Extraction of the scheme $\s$ out of the $g$-tree $\t$.}{
\psfrag{t}[][]{$\t$}
\psfrag{s}[][]{$\s$}
}


\subsubsection{Decomposition of a $g$-tree}

When iteratively removing vertices of degree $1$, we actually remove whole trees. Let $\ccc_1$, $\ccc_2$, \dots, $\ccc_k$ be one of the maximal chains of half-edges linking two nodes. The trees that we remove, appearing on the left side of this chain, connected to one of the $\ccc_i^-$'s, form a forest---with~$k$ trees---as defined in Section \ref{secfor}. Beware that the tree connected to $\ccc_k^+$ is not a part of this forest; it will be the first tree of some other forest. Remember that the forests we consider always end by a single vertex not considered to be a tree. This chain being later replaced by a single half-edge of the scheme, we see that a $g$-tree $\t$ can be decomposed into its scheme $\s$ and a collection of forests $(\f^\e)_{\e\in \vec E(\s)}$. Recall that $\vec E(\s)$ is the set of all half-edges of $\s$. 

For $\e\in \vec E(\s)$, let us define the integers $m^\e \ge 0$ and $\sigma^\e \ge 1$ by
\begin{equation}\label{msig}
\f^{\e} \in \F_{\sigma^\e}^{m^\e},
\end{equation}
so that $m^\e$ records the ``size'' of the forest attached on the half-edge $\e$ and $\sigma^\e$ its ``length.''

In order to recover $\t$ from $\s$ and these forests, we need to record the position its root. It may be seen as a half-edge of the forest $\f^{\e_*}$ corresponding to the root $\e_*$ of $\s$. We code it by the integer
\begin{equation}\label{conu}
u \in \llbracket 0, \, 2m^{\e_*} + \sigma^{\e_*} \llbracket
\end{equation}
for which this half-edge links $\f^{\e_*}(u)$ to $\f^{\e_*}(u+1)$.

For every half-edge $\e\in \vec E(\s)$, if we call $\bar\e$ its reverse, we readily obtain the relation:
\begin{equation}\label{consig}
\sigma^{\bar\e}=\sigma^{\e}.
\end{equation}

\bigskip

This decomposition may be inverted. Let us suppose that we have a scheme $\s$ and a collection of forests $(\f^\e)_{\e\in \vec E(\s)}$. Let us define the integers $m^\e$'s and $\sigma^\e$'s by \eqref{msig} and suppose they satisfy \eqref{consig}. Let again $0 \le u < 2m^{\e_*} + \sigma^{\e_*}$ be an integer. Then we may construct a $g$-tree as follows. First, we replace every edge $\{\e,\bar\e\}$ by a chain of $\sigma^\e = \sigma^{\bar\e}$ edges. Then, for every half-edge $\e\in \vec E(\s)$, we replace the chain of half-edges corresponding to it by the forest $\f^\e$, in such a way that its floor\footnote{The floor of a forest $\f$ is naturally oriented from $1$ to $t(\f)+1$. The forest $\f^\e$ is then grafted ``to the left side'' of $\e$.} matches with the chain. Finally, we find the root inside $\f^{\e_*}$ thanks to the integer~$u$.

This discussion is summed up by the following proposition. The factor $1/2$ in the last statement comes from the fact that the floor of $\f^\e$ and that of $\f^{\bar\e}$ are overlapping in the $g$-tree, thus their edges should be counted only once. 

\begin{prop}\label{decompnolab}
The above construction provides us with a bijection between the set of all $g$-trees and the set of all triples $\lp \s,(\f^\e)_{\e\in \vec E(\s)},u\rp$ where $\s \in \Sg$ is a scheme (of genus $g$), the forests $\f^\e\in \F_{\sigma^\e}^{m^\e}$ satisfy \eqref{consig} and $u$ satisfies \eqref{conu}.

Moreover, $g$-trees with $n$ edges correspond to triples satisfying $\sum_{\e\in \vec E(\s)} \lp m^\e + \frac 1 2 \sigma^\e \rp = n$.
\end{prop}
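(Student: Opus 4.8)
The plan is to verify that the construction described informally in the paragraphs preceding the statement is genuinely a bijection, by exhibiting explicit forward and backward maps and checking they are mutually inverse. I would organize this in two directions. First, given a $g$-tree $\t$, I would show that the extraction procedure (iteratively delete degree-$1$ vertices to obtain a non-rooted $g$-tree, re-root it on the first remaining half-edge in the facial order if the root was deleted, contract each maximal chain of degree-$2$ vertices to a single edge) produces a well-defined scheme $\s \in \Sg$, together with a well-defined collection $(\f^\e)_{\e \in \vec E(\s)}$ of forests and an integer $u$. The key points to check here are: (a) the object obtained is a scheme of genus $g$ — the genus is unchanged because deleting degree-$1$ vertices and contracting degree-$2$ chains are both operations that preserve the underlying surface up to homeomorphism, and no vertices of degree $1$ or $2$ remain by construction; (b) for each half-edge $\e$ of $\s$, the trees hanging off the left side of the corresponding chain, together with the terminal vertex (the endpoint $\ccc_k^+$ of the last chain-half-edge, included as the non-tree final vertex), assemble into a legitimate forest in the sense of the Definition in Section~\ref{secfor}, in particular satisfying condition~$iv.$ ($c_{t(\f)+1}(\f)=0$); (c) the constraint \eqref{consig} holds, which is exactly \eqref{consig} proved earlier; and (d) the root of $\t$, being a half-edge of the chain carrying the root of $\s$, corresponds to a unique arc of $\f^{\e_*}$ and hence to a unique $u \in \ent 0 {2m^{\e_*}+\sigma^{\e_*}} \setminus \{2m^{\e_*}+\sigma^{\e_*}\}$, i.e. $u$ satisfying \eqref{conu}.

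Second, in the reverse direction, given a triple $\lp \s, (\f^\e)_{\e \in \vec E(\s)}, u\rp$ satisfying \eqref{consig} and \eqref{conu}, I would describe the reconstruction: subdivide each edge $\{\e,\bar\e\}$ of $\s$ into a chain of $\sigma^\e = \sigma^{\bar\e}$ edges, then graft the forest $\f^\e$ to the left of the half-edge $\e$ so that its floor $\ent 1 {t(\f^\e)+1}$ is identified with the successive vertices of that chain (and symmetrically $\f^{\bar\e}$ to the left of $\bar\e$, which is the right of $\e$; the two floors overlap along the chain, which is consistent since the floors have the same length by \eqref{consig}), and finally place the root at the arc of $\f^{\e_*}$ indexed by $u$. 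I would check that this yields a $g$-tree: the surface is built by gluing, the genus is that of $\s$ (grafting forests to the boundary of the unique face does not change the face count or genus), and there is a single face. Then the two maps are seen to be inverse to each other by a direct inspection: starting from a triple, reconstructing $\t$, and re-extracting recovers the scheme (the grafted forests consist precisely of degree-$\le 2$ material that gets deleted/contracted), the forests (the left-side trees along each chain), and the integer $u$ (the root position); and conversely. Here one must be slightly careful with the re-rooting convention: if $u$ indexes an arc that is not in the original chain position, one checks that the "first remaining half-edge in facial order" convention applied after deletion lands on the correct half-edge of $\s$, and that the facial order of $\s$ is the one induced by $\t$.

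For the final sentence, the edge count: in the reconstruction, each edge $\{\e,\bar\e\}$ of $\s$ becomes a chain of $\sigma^\e$ edges, and then the half-edge $\e$ (resp. $\bar\e$) contributes its $m^\e$ (resp. $m^{\bar\e}$) tree edges from the grafted forest; the floor edges of $\f^\e$ and $\f^{\bar\e}$ are identified with the chain edges, so they are not counted separately. Hence the total number of edges of $\t$ is $\sum_{\{\e,\bar\e\} \in E(\s)} \sigma^\e + \sum_{\e \in \vec E(\s)} m^\e$. Writing $\sum_{\{\e,\bar\e\} \in E(\s)} \sigma^\e = \frac12 \sum_{\e \in \vec E(\s)} \sigma^\e$ using \eqref{consig}, this equals $\sum_{\e \in \vec E(\s)} \lp m^\e + \frac12 \sigma^\e \rp$, as claimed. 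I expect the main obstacle to be the bookkeeping in direction two — precisely formalizing the grafting of a forest "to the left of a half-edge" so that the floor matches the chain and the resulting combinatorial map is unambiguous and planar-embeddable on the right surface, and verifying that extraction and reconstruction are exact mutual inverses including the re-rooting convention; the genus and face-count invariances and the edge-counting identity are comparatively routine once the setup is nailed down.
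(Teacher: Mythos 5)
Your proposal is correct and matches the paper's intended argument: the paper does not give a formal proof of this proposition at all, but presents it explicitly as a summary of the preceding discussion (the paragraphs describing extraction of the scheme, the forests hanging to the left of each chain, the integer $u$, and the reverse construction by subdivision and grafting), with the $\tfrac12$ factor in the edge count justified in the sentence immediately before the statement by the overlap of the two floors along each chain. Your forward/backward maps, the check that \eqref{consig} and \eqref{conu} are exactly the constraints that arise, and the edge bookkeeping $\sum_{\text{edges}}\sigma^\e+\sum_\e m^\e=\sum_\e(m^\e+\tfrac12\sigma^\e)$ all reproduce this; the re-rooting subtlety you flag is real but is exactly what the paper's convention (``first remaining half-edge following the root in the facial order'') is designed to handle.
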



\subsubsection{Decomposition of a well-labeled $g$-tree}

We now deal with a well-labeled $g$-tree. We will need the following definitions:

\begin{defi}
We call \textbf{Motzkin path} a sequence of the form $(M_n)_{0 \le n \le \sigma}$ for some $\sigma \ge 0$ such that $M_0=0$ and for $0 \le i \le \sigma -1$, $M_{i+1} - M_i \in \{-1,0,1\}$. We write $\sigma(M) \de \sigma$ its lifetime, and $\hat M \de M_{\sigma(M)}$ its final value.
\end{defi}

A \textbf{Motzkin bridge} of lifetime $\sigma$ from $l_1 \in \Z$ to $l_2 \in \Z$ is an element of the set
$$\M_{[0,\sigma]}^{l_1 \to l_2} \de \lb l_1 + M\,:\, M \text{ Motzkin path such that }\sigma(M)=\sigma,\, \hat M=l_2-l_1 \rb.$$
We say that $(M_n)_{n\ge 0}$ is a \bf{simple Motzkin walk} if it is defined as the sum of i.i.d. random variables with law $\frac 13 (\delta_{-1} + \delta_{0} + \delta_{1})$.

\begin{rem}
We then have 
$$\big| \M_{[0,\sigma]}^{l_1 \to l_2} \big| = 3^\sigma \, \P(M_\sigma = l_2 - l_1 )$$
where $(M_i)_{i \ge 0}$ is a simple Motzkin walk.
\end{rem}

When decomposing a well-labeled $g$-tree $(\t,\l)$ into a triple $( \s,(\f^\e),u)$ according to Proposition~\ref{decompnolab}, every forest $\f^\e$ naturally inherits a labeling function noted $\tilde \l^\e$ from $\l$. In general, the forest $(\f^\e,\tilde\l^\e)$ is not well-labeled, because the labels of its floor have no reason to be equal to $0$. We will transform it into a Motzkin bridge $\mM^\e$ starting from $0$ and a well-labeled forest $(\f^\e,\l^\e)$. The Motzkin bridge records the floor labels shifted in order to start from $0$: for $0\le i \le t(\f^\e)$, $\mM^\e(i) \de \tilde\l^\e(i+1)-\tilde\l^\e(1)$, where, on the right-hand side, we used the notation $\{1, 2, \dots, t(\f^\e)+1\}$ for the floor of $\f^\e$. The well-labeled forest is obtained by shifting all the labels tree by tree in such a way that the root label of any tree is $0$: for all $w\in\f^\e$, $\l^\e(w)\de \tilde\l^\e(w)-\tilde\l^\e(\a(w))$.

We thus decompose the well-labeled $g$-tree $(\t,\l)$ into its scheme $\s$, a collection $(\mM^\e)_{\e\in\vec E(\s)}$ of Motzkin bridges started at $0$, a collection $(\f^{\e},\l^\e)_{\e \in \vec E(\s)}$ of well-labeled forests and an integer~$u$, as shown on Figure \ref{structure}.

\tfig{structure}{Decomposition of a well-labeled $g$-tree $\t$ into its scheme $\s$, the collection of Motzkin bridges $(\mM^\e)_{\e\in\vec E(\s)}$,  and the collection of well-labeled forests $(\f^{\e},\l^\e)_{\e \in \vec E(\s)}$. In this example, the integer $u=50$. The two nodes of $\t$ are more thickly outlined.}{
\psfrag{6}[][][.62]{-$4$}
\psfrag{7}[][][.62]{-$3$}
\psfrag{8}[][][.62]{-$2$}
\psfrag{9}[][][.62]{-$1$}
\psfrag{0}[][][.62]{$0$}
\psfrag{1}[][][.62]{$1$}
\psfrag{2}[][][.62]{$2$}
\psfrag{3}[][][.62]{$3$}
\psfrag{4}[][][.62]{$4$}
\psfrag{a}[][][.62]{\textcolor{gris}{$1$}}
\psfrag{b}[][][.62]{\textcolor{gris}{$2$}}
\psfrag{c}[][][.62]{\textcolor{gris}{$3$}}
\psfrag{d}[][][.62]{\textcolor{gris}{$4$}}
\psfrag{t}[][]{$\t$}
\psfrag{s}[][]{$\s$}
\psfrag{f}[r][r]{$\f^{\e},\l^\e$}
\psfrag{M}[r][r]{$\mM^{\e}$}
}

For $\e\in \vec E(\s)$, we define the integer $l^\e\in \Z$ to be such that
\begin{equation}\label{msigl}
\mM^\e \in \M_{[0,\sigma^\e]}^{0\to l^\e}.
\end{equation}
It records the spatial displacement made along the half-edge $\e$. Because the floor of $\f^\e$ overlaps the floor of $\f^{\bar \e}$ in the $g$-tree, $\mM^\e$ and $\mM^{\bar\e}$ read the same labels in opposite direction:
\begin{equation}\label{mm}
\mM^{\bar\e}(i) = \mM^\e(\sigma^\e-i) - l^\e.
\end{equation}

In particular, $l^{\bar\e}= -l^\e$. But this is not the only constraints on the family $(l^\e)_{\e\in \vec E(\s)}$. These will be easier to understand while looking at vertices instead of edges. For every vertex $v \in V(\s)$, we let $l^v$ be the label of the corresponding node shifted in such a way that $l^{\e_*^-}=0$. We have the following relation between $(l^\e)_{\e\in \vec E(\s)}$ and $(l^v)_{v\in V(\s)}$: for all $\e\in \vec E(\s)$,
\begin{equation}\label{ev}
l^\e = l^{\e^+}-l^{\e^-},
\end{equation}
so that the family $(l^v)_{v\in V(\s)}$ entirely determines $(l^\e)_{\e\in \vec E(\s)}$. Because of the choice we made, $l^{\e_*^-}=0$, it is easy to see that $(l^\e)_{\e\in \vec E(\s)}$ determines $(l^v)_{v\in V(\s)}$ as well.

It now becomes clear that the only constraint on $(l^\e)_{\e\in \vec E(\s)}$ is to be obtained from a family $(l^v)_{v\in V(\s)}$ by the relations \eqref{ev}.

\bigskip

Let $\s$ be a scheme, $(\mM^\e)_{\e\in \vec E(\s)}$ be a family of Motzkin bridges started from $0$, $(\f^{\e},\l^\e)_{\e \in \vec E(\s)}$ be a family of well-labeled forests, and $u$ be an integer. Let the integers $m^\e$'s, $\sigma^\e$'s and $l^\e$'s be defined by \eqref{msig} and \eqref{msigl}. We will say that the quadruple $\big( \s, (\mM^\e)_{\e\in \vec E(\s)}, (\f^{\e},\l^\e)_{\e \in \vec E(\s)},u \big)$ is \textit{compatible} if the integers $\sigma^\e$'s satisfy the constraints \eqref{consig}, the Motzkin bridges $\mM^\e$'s satisfy \eqref{mm}, the integers $l^\e$'s can be obtained from a family $(l^v)_{v\in V(\s)}$ by the relations \eqref{ev}, and $u$ satisfies \eqref{conu}.

Let suppose now that we have a compatible quadruple $\big( \s, (\mM^\e)_{\e\in \vec E(\s)}, (\f^{\e},\l^\e)_{\e \in \vec E(\s)},u \big)$. We may reconstruct a well-labeled $g$-tree as follows. We begin by suitably relabeling the forests. For every half-edge $\e$, first, we shift the labels of $\mM^\e$ by $l^{\e^-}$ so that it becomes a bridge from $l^{\e^-}$ to $l^{\e^+}$. Then, we shift all the labels of $(\f^\e,\l^\e)$ tree by tree according to the Motzkin bridge: precisely, we change $\l^\e$ into $w\in\f^\e \mapsto l^{\e^-} + \mM^\e(\a(w)-1) + \l^\e(w)$. Then, we replace the half-edge~$\e$ by this forest, as in the previous section. As before, we find the root thanks to $u$. Finally, we shift all the labels for the root label to be equal to $0$. This discussion is summed up by the following proposition.

\begin{prop}
The above construction provides us with a bijection between the set of all well-labeled $g$-trees and the set of all compatible quadruples $\big( \s,(\mM^\e)_{\e\in \vec E(\s)}, (\f^{\e},\l^\e)_{\e \in \vec E(\s)},u \big)$.

Moreover, $g$-trees with $n$ edges correspond to quadruples satisfying $\sum_{\e\in \vec E(\s)} \lp m^\e + \frac 1 2 \sigma^\e \rp = n$.
\end{prop}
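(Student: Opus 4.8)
The plan is to check that the two maps described in the preceding paragraphs — "decompose a well-labeled $g$-tree into a compatible quadruple" and "reconstruct a well-labeled $g$-tree from a compatible quadruple" — are mutually inverse, and that each sends the size parameter to the claimed value. Most of the work has already been done: Proposition~\ref{decompnolab} provides the bijection at the level of (unlabeled) $g$-trees and triples $(\s,(\f^\e)_\e,u)$, so all that remains is to add the labels. I would therefore treat the statement as a refinement of Proposition~\ref{decompnolab}, and argue that once the underlying $g$-tree $\t$ and its scheme $\s$ are fixed, a labeling function $\l$ on $\t$ corresponds bijectively to the data $\big((\mM^\e)_\e,(\l^\e)_\e\big)$ subject to the stated compatibility constraints.

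First I would verify that the decomposition map is well defined, i.e.\ that the quadruple it produces is compatible. Constraint~\eqref{consig} is inherited from Proposition~\ref{decompnolab}; \eqref{mm} holds because the floors of $\f^\e$ and $\f^{\bar\e}$ are literally the same chain of vertices in $\t$ read in opposite directions, so the two Motzkin bridges record the same sequence of labels reversed and re-centred, giving $\mM^{\bar\e}(i)=\mM^\e(\sigma^\e-i)-l^\e$; and the constraint on $(l^\e)_\e$ holds by construction, since setting $l^v$ to be the ($\e_*^-$-centred) label of the node $v$ gives a family $(l^v)_{v\in V(\s)}$ from which $(l^\e)_\e$ is recovered through $l^\e=l^{\e^+}-l^{\e^-}$, exactly~\eqref{ev}. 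Condition~\eqref{conu} on $u$ is again from Proposition~\ref{decompnolab}. Conversely, I would check that the reconstruction map, starting from a compatible quadruple, produces a genuine well-labeled $g$-tree: the relabeling $w\mapsto l^{\e^-}+\mM^\e(\a(w)-1)+\l^\e(w)$ is consistent across the overlapping floors precisely because of~\eqref{mm} and~\eqref{ev} (so the two prescriptions coming from $\e$ and $\bar\e$ agree on the shared floor vertices), the $\pm1$-Lipschitz condition along edges of $\t$ follows from the corresponding conditions for $\mM^\e$ (along floor edges) and for $\l^\e$ (along tree edges, with a telescoping check across the forest/floor junctions), and the final global shift enforces root label $0$.

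Then I would check the two compositions are the identity. In one direction: decomposing the $\t$ obtained by reconstruction returns the original $\s$ and $(\f^\e)_\e$ and $u$ by Proposition~\ref{decompnolab}, and the labels $\mM^\e$, $\l^\e$ are recovered because the centring operations (floor labels shifted to start at $0$; each tree's labels shifted so its root is $0$) invert exactly the shifts applied during reconstruction. In the other direction: reconstructing from the quadruple obtained by decomposing $(\t,\l)$ gives back $\t$ (again by Proposition~\ref{decompnolab}), and the label at each vertex is reassembled as (node label) $+$ (Motzkin increment along the chain) $+$ (within-tree displacement), which is a telescoping identity that returns $\l$ up to the overall additive constant fixed by the $\l(\e_*^-)=0$ normalization. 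The final size assertion $\sum_{\e\in\vec E(\s)}(m^\e+\tfrac12\sigma^\e)=n$ is identical to the one in Proposition~\ref{decompnolab}, since adding labels changes neither the number of edges of $\t$ nor of any forest.

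The main obstacle — really the only subtle point — is the bookkeeping around the constraint on $(l^\e)_\e$ and the gluing of labels along the shared floors: one must be careful that "obtained from a family $(l^v)_{v\in V(\s)}$ via~\eqref{ev}" is equivalent to the cocycle condition that the sum of the $l^\e$ around every cycle of $\s$ vanishes, and that this is exactly what makes the node labels (hence the whole relabeling) well defined and independent of choices. Verifying that~\eqref{mm} together with this cocycle condition makes the reconstructed labeling single-valued on each overlapping floor is where a careless argument could go wrong; everything else is a routine unwinding of the definitions on top of Proposition~\ref{decompnolab}.
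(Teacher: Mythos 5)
Your proposal is correct and follows essentially the same route as the paper: the paper does not give a separate formal proof but treats the proposition as a summary of the preceding discussion, which builds on Proposition~\ref{decompnolab} (the unlabeled bijection) and then explains how the labels are centred into the $\mM^\e$'s and $\l^\e$'s and how the reconstruction shifts them back. Your elaboration of the compatibility check, the inverse composition, and the identification of the constraint on $(l^\e)_\e$ as a cocycle condition is a faithful and slightly more explicit version of what the paper sketches.
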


If we call $(C^\e,L^\e)$ the contour pair of $(\f^\e,\l^\e)$, then we may retrieve the oldest ancestor of $\f^\e(i)$ thanks to $C^\e$ by the relation
$$\a\big(\f^\e(i)\big) -1 = \sigma^\e - \underline C^\e(i),$$
where we use the notation
$$\underline{X}_s \de \inf_{[0, s]} X$$
for any process $(X_s)_{s\ge 0}$. The function
\begin{equation}\label{lele}
\L^\e \de \Big( L^\e(t) + \mM^\e\big( \sigma^\e - \underline C^\e(t) \big) \Big)_{0\le t \le 2m^\e +\sigma^\e}
\end{equation}
then records the labels of the forest $\f^\e$, once shifted tree by tree according to the Motzkin bridge~$\mM^\e$. This function will play an important part in Section \ref{secpro}.

\bigskip

Through the Chapuy-Marcus-Schaeffer bijection, a uniform random quadrangulation corresponds to a uniform random well-labeled $g$-tree. In order to investigate the scaling limit of the latter, we will proceed in two steps. First, we consider the scaling limit of its \textit{structure}, consisting in its scheme along with the integers $m^\e$'s, $\sigma^\e$'s, $l^v$'s and $u$ previously defined. Then, we deal with its Motzkin bridges and forests conditionally given its structure.


\section{Convergence of the structure of a uniform well-labeled $g$-tree}\label{seccv}
\subsection{Preliminaries}

We investigate here the convergence of the integers previously defined, suitably rescaled, in the case of a uniform random well-labeled $g$-tree with $n$ vertices. Let $(\t_n,\l_n)$ be uniformly distributed over the set $\T_n$ of well-labeled $g$-trees with $n$ vertices. We call its scheme $\s_n$ and we define
$$(\mM_n^{\e})_{\e \in \vec E(\s_n)},\
(\f_n^{\e},\l_n^\e)_{\e \in \vec E(\s_n)},\ 
(m_n^\e)_{\e\in \vec E(\s_n)},\ 
(\sigma_n^\e)_{\e\in \vec E(\s_n)},\ 
(l_n^\e)_{\e\in \vec E(\s_n)},\ 
(l_n^v)_{v\in V(\s_n)},\ \text{and}\ 
u_n$$
as in the previous section. We know that the right scalings are $2n$ for sizes, $\sqrt{2n}$ for distances in the $g$-tree, and $\g$ for spatial displacements\footnote{Recall that $\gamma \de \lp\frac {8}9 \rp^{\frac 14}$.}, so we set:
$$m_{(n)}^\e \de \frac{2m_n^\e + \sigma^\e_n}{2n}\scomp
\sigma_{(n)}^\e \de \frac{\sigma_n^\e}{\sqrt{2n}}\scomp
l_{(n)}^\e \de \frac{l_n^\e}{\g}\scomp
l_{(n)}^v \de \frac{l_n^v}{\g}\sand
u_{(n)} \de \frac{u_n}{2n}.$$

\begin{rem}
Throughout this paper, the notations with a parenthesized $n$ will always refer to suitably rescaled objects---as in the definitions above.
\end{rem}

As sensed in the previous section, it will be more convenient to work with $l^v$'s instead of $l^\e$'s. We use the notation $\Zp\de\{0,1,\dots\}$ for the set of non-negative integers. For any scheme $\s \in \Sg$, we define the set $\CC_n(\s)$ of quadruples $(m,\sigma,l,u)$ lying in $\Zp^{\vec E(\s)}\times\N^{\vec E(\s)}\times\Z^{V(\s)}\times\Zp$
such that:
\begin{itemize}
	\item $\forall \e \in \vec E(\s),\, \sigma^{\bar\e}=\sigma^{\e}$,
	\item $l^{\e_*^-}=0$,
	\item $0\le u \le 2m^{\e_*} + \sigma^{\e_*} -1$,
	\item $\sum_{\e\in \vec E(\s)} \lp m^\e + \frac 12 \sigma^\e\rp =n$.
\end{itemize}

This is the set of integers satisfying the constraints discussed in the previous section for a well-labeled $g$-tree with $n$ edges. For $(m,\sigma,l,u)\in \CC_n(\s)$, we will compute the probability that $\s_n=\s$ and $(m_n,\sigma_n,l_n,u_n)=(m,\sigma,l,u)$. A $g$-tree has such features if and only if its scheme is~$\s$ and, for every $\e\in\vec E(\s)$, the path $\mM^\e$ is a Motzkin bridge from $0$ to $l^\e=l^{\e^+}\!\!-\,l^{\e^-}$ on $[0,\sigma^\e]$, and the well-labeled forest $(\f^\e,\l^\e)$ lies in $\fF_{\sigma^\e}^{m^\e}$.

Moreover, because of the relation \eqref{mm}, the Motzkin bridges $(\mM^\e)_{\e\in\vec E(\s)}$ are entirely determined by $(\mM^\e)_{\e\in\ori E(\s)}$, where $\ori E(\s)$ is any orientation of $\vec E(\s)$.
Using Lemma \ref{forests}, we obtain
\begin{align}
\P\lp \s_n=\s,\,(m_n,\sigma_n,l_n,u_n)=(m,\sigma,l,u) \rp\notag\\
	&\hspace{-2cm}= \frac 1 {\lm \T_n \rm} \prod_{\e\in\ori{E}(\s)} \big| \M_{[0,\sigma^\e]}^{0\to l^\e} \big| \big| \fF_{\sigma^\e}^{m^\e} \big| \big|\fF_{\sigma^{\bar\e}}^{m^{\bar\e}}\big| \notag\\
	&\hspace{-2cm}= \frac {12^n} {\lm \T_n \rm} \prod_{\e\in{\vec E}(\s)} \frac {\sigma^\e} {2m^\e+\sigma^\e} \, \P(S_{2m^\e+\sigma^\e}=\sigma^\e) \prod_{\e\in\ori{E}(\s)} \P(M_{\sigma^\e}=l^\e)\label{proba}
\end{align}
where $(S_i)_{i\ge 0}$ is a simple random walk on $\Z$ and $(M_i)_{i \ge 0}$ is a simple Motzkin walk.

\bigskip

We will need the following local limit theorem (see \cite[Theorems VII.1.6 and VII.3.16]{petrov75sir}) to estimate the probabilities above. We call $\ga$ the density of a standard Gaussian random variable: $\ga(x) = \frac 1 {\sqrt{2\pi}} \,e^{-\frac {x^2} 2}$.

\begin{prop}\label{petrov}
Let $(X_i)_{i \ge 0}$ be a sequence of i.i.d. integer-valued centered random variables with a moment of order $r_0$ for some $r_0\geq 3$. Let $\eta^2 \de \var(X_1)$, $h$ be the maximal span\footnote{\label{maxspan}We call \textbf{maximal span} of an integer-valued random variable $X$ the greatest $h \in \N$ for which there exists an integer $a$ such that a.s. $X \in a + h\Z$.} of $X_1$ and the integer $a$ be such that a.s. $X_1 \in a + h\Z$. We define $\Sigma_k \de \sum_{i=0}^k X_i$, and we write $Q^\Sigma_k(i) \de \P(\Sigma_k = i)$.

\begin{enumerate}
	\item We have
	   $$\sup_{i \in ka + h\Z} \lm \frac \eta h \, \sqrt{k} \, Q^\Sigma_k(i) - \ga \lp \frac {i} {\eta \sqrt{k}} \rp \rm = o \lp k^{-1/2} \rp.$$
	\item For all $2 \le r\le r_0$, there exists a constant $C$ such that for all $i \in \Z$ and $k \ge 1$,
	   $$\lm \frac \eta h \, \sqrt{k} \, Q^\Sigma_k(i) \rm \le \frac C {1+\lm \frac {i} {\eta \sqrt{k}} \rm^r}.$$
\end{enumerate}
\end{prop}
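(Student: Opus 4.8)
The plan is to reduce both parts to classical local limit theory for lattice sums, as developed in \cite[Chapter~VII]{petrov75sir}: Part~1 to Gnedenko's local central limit theorem in its Edgeworth-refined form, and Part~2 to a power-type local bound, the only non-bookkeeping ingredient being an elementary two-block decomposition for the latter.

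For Part~1 I would invoke \cite[Theorem~VII.1.6]{petrov75sir} directly: under the stated assumptions one has, uniformly over $i\in ka+h\Z$,
$$\frac{\eta}{h}\,\sqrt{k}\;Q^\Sigma_k(i)=\ga\!\lp\frac{i}{\eta\sqrt{k}}\rp+\frac{\E{X_1^3}}{6\,\eta^3\,\sqrt{k}}\;H_3\!\lp\frac{i}{\eta\sqrt{k}}\rp\,\ga\!\lp\frac{i}{\eta\sqrt{k}}\rp+o\!\lp k^{-1/2}\rp,$$
where $H_3$ is the third Hermite polynomial. Since $H_3\,\ga$ is bounded, the Edgeworth term is $O(k^{-1/2})$ in general and vanishes identically when $\E{X_1^3}=0$; as the step laws to which we shall apply the proposition are symmetric---those of the simple random walk and of the simple Motzkin walk---this is the case here, and the remainder is genuinely $o(k^{-1/2})$.

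For Part~2 I would distinguish a bulk regime and a tail regime. In the bulk $|i|\le\eta\sqrt{k}$ one has $1+|i/(\eta\sqrt{k})|^{\,r}\asymp1$, so it is enough to use the crude uniform bound $\sup_j\sqrt{k}\,Q^\Sigma_k(j)\le C$, which is contained in Part~1 (and can also be obtained directly from a standard bound on the characteristic function of $X_1$). In the tail $i>\eta\sqrt{k}$, the case $i<-\eta\sqrt{k}$ being symmetric, I would write $\Sigma_k=B_1+B_2$ with $B_1,B_2$ independent, each a sum of $\Theta(k)$ of the variables $X_j$. On the event $\{\Sigma_k=i\}$ at least one of $B_1,B_2$ is $\ge i/2$, whence
$$Q^\Sigma_k(i)\;\le\;\sum_{\ell=1}^{2}\Big(\sup_{j}\P\!\lp B_{3-\ell}=j\rp\Big)\;\P\!\lp B_\ell\ge\tfrac{i}{2}\rp .$$
By Part~1 applied to each block, $\sup_j\P(B_{3-\ell}=j)\le C/\sqrt{k}$; by a Rosenthal-type moment inequality, $\E{|B_\ell|^{\,r}}\le C_r\,k^{\,r/2}$ for $2\le r\le r_0$ (this is where $\E{|X_1|^{\,r}}<\infty$ enters), so Markov's inequality gives $\P(B_\ell\ge i/2)\le C_r\,2^{\,r}\,k^{\,r/2}\,i^{-r}$. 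Altogether $\frac{\eta}{h}\sqrt{k}\,Q^\Sigma_k(i)\le C\,k^{\,r/2}\,i^{-r}$, which on the range $i>\eta\sqrt{k}$ is $\asymp C\big(1+|i/(\eta\sqrt{k})|^{\,r}\big)^{-1}$. Combining with the bulk estimate gives Part~2; this is essentially the content of \cite[Theorem~VII.3.16]{petrov75sir}.

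The delicate point is the exact power of $k$ in Part~2. The one-line estimate $Q^\Sigma_k(i)=\P(\Sigma_k=i)\le\P(|\Sigma_k|\ge i)\le\E{|\Sigma_k|^{\,r}}\,i^{-r}\le C\,k^{\,r/2}\,i^{-r}$ overshoots by a factor $\sqrt{k}$ (the target being $k^{(r-1)/2}\,i^{-r}$), and is already inconclusive at $r=2$, where Chebyshev's inequality alone cannot produce the missing $k^{-1/2}$. The two-block decomposition is exactly what recovers it: the polynomial decay in $i$ comes from the moment/tail bound on the large block, the extra $k^{-1/2}$ from the local limit theorem applied to the other block, and Rosenthal's inequality (rather than a cruder bound) is what keeps the block-moment estimate at the sharp order $k^{\,r/2}$.
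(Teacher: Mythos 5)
Your proof takes essentially the same route as the paper's, which simply cites Petrov's Chapter VII (Theorem VII.1.6 applied to the span-one variables $(X_k-a)/h$ for the first part, Theorem VII.3.16 for the second); your two-block decomposition for Part~2 is a correct reconstruction of what underlies that second citation rather than a new idea, and it does recover the sharp power of $k$ for the stated reason. The one genuinely useful thing you add concerns Part~1: you are right that, for a general centered lattice variable with only a finite third moment, Petrov's local theorem gives an Edgeworth correction of exact order $k^{-1/2}$ proportional to $\E{X_1^3}$, so the $o(k^{-1/2})$ rate claimed in the proposition (with the bare Gaussian density and no correction term) is not a consequence of the stated hypotheses alone --- it requires $\E{X_1^3}=0$. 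The paper's one-line proof does not flag this. In practice this is harmless: the proposition is only ever applied to the simple random walk and the simple Motzkin walk, both symmetric, where the correction vanishes; and in fact a glance at the places where Part~1 is invoked (the pointwise asymptotics in Proposition~\ref{cvint} and in the proofs of Lemmas~\ref{bn}, \ref{lembn}, \ref{fn}) shows that the Gnedenko rate $o(1)$, valid in full generality, would already suffice there. So the discrepancy you found is a slight overstatement in the proposition, not an error in the paper's arguments downstream of it.
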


\begin{pre}
The first part of this theorem is merely \cite[Theorem VII.1.6]{petrov75sir} applied to the variables $\frac 1 h (X_k - a)$, which have $1$ as maximal span. The second part is an easy consequence of \cite[Theorem VII.3.16]{petrov75sir}.
%
\end{pre}

In what follows, we will always use the notation $S$ for simple random walks, $M$ for simple Motzkin walks, and $\Sigma$ for any other random walks. We will use this theorem with $S$ and $M$: we find $(\eta,h) = (1,2)$ for $S$ and $(\eta,h) = (\sqrt{ 2 / 3},1)$ for $M$. In both cases, we may take $r$ as large as we want.


\subsection{Result}

Recall that $\Sg^*$ is the set of all dominant schemes of genus $g$, that is schemes with only vertices of degree $3$. We call $\ga_a$ the density of a centered Gaussian variable with variance $a$, as well as $\ga'_a$ its derivative:
$$\ga_a(x) \de \frac 1 {\sqrt{a}} \, \ga \lp \frac x {\sqrt a} \rp \sand \ga'_a(x) = - \frac x {a^{3/2}}\, \ga \lp \frac x {\sqrt a}\rp.$$

For any $\s\in \Sg$, we identify an element $(m,\sigma,l,u) \in \R_+^{\vec E(\s)\bs\{\e_*\}}\times (\R_+^*)^{\ori E(\s)}\times \R^{V(\s)\bs\{\e^-_*\}}\times \R_+$ with an element of $\R_+^{\vec E(\s)}\times (\R_+^*)^{\vec E(\s)}\times \R^{V(\s)}\times \R_+$ by setting:
\addtocounter{equation}{1}
\newcounter{eqit}
\setcounter{eqit}{\theequation}
\begin{itemize}
	\item $m^{\e_*} \de 1 - \sum_{\e\in \vec E(\s)\bs\{\e_*\}} m^\e$, \hspace*{\fill}(\theeqit.1)
	\item for every $\e \in \ori E(\s),\, \sigma^{\bar\e}\de \sigma^{\e}$, \hspace*{\fill}(\theeqit.2)
	\item $l^{\e_*^-}\de0$. \hspace*{\fill}(\theeqit.3)
\end{itemize}

We write
$$\Delta_\s \de \lb (x_\e)_{\e\in\vec E(\s)} \in [0,1]^{\vec E(\s)},\, \textstyle\sum_{\e\in\vec E(\s)} x_\e =1 \rb$$
the simplex of dimension $|\vec E(\s)|-1$. Note that the vector $m$ lies in the simplex $\Delta_\s$ as long as $m^{\e_*} \ge 0$. We define the probability $\mu$ by, for all non-negative measurable function $\varphi$ on $\bigcup_{\s\in\Sg} \{\s\}\times\Delta_\s\times (\R_+^*)^{\vec E(\s)}\times \R^{V(\s)}\times [0,1]$,
$$\mu(\varphi) = \frac1\Upsilon \sum_{\s\in\Sg^*} \int_{\mathcal{S}^\s} d\mathcal{L}^\s \ 
		\1{\lb m^{\e_*}\ge 0,\,u< m^{\e_*}\rb}\ \varphi\lp \s, m,\sigma, l,u \rp
		\prod_{\e\in{\vec E}(\s)} - \ga'_{m^\e} \lp \sigma^\e \rp
		\prod_{\e\in\ori{E}(\s)} \ga_{\sigma^\e} \lp l^\e \rp,$$
where $d\mathcal{L}^\s= d(m^\e)\, d(\sigma^\e)\, d(l^v)\, du$ is the Lebesgue measure on the set
$$\mathcal{S}^\s \de [0,1]^{\vec E(\s)\bs\{\e_*\}}\times(\R_+^*)^{\ori E(\s)}\times \R^{V(\s)\bs\{\e^-_*\}}\times[0,1]$$
and
\begin{equation}\label{upsi}
\Upsilon = \sum_{\s\in\Sg^*} \int_{\mathcal{S}^\s} d\mathcal{L}^\s \ \1{\lb m^{\e_*}\ge 0,\,u< m^{\e_*}\rb}\ 
		\prod_{\e\in{\vec E}(\s)} - \ga'_{m^\e} \lp \sigma^\e \rp
		\prod_{\e\in\ori{E}(\s)} \ga_{\sigma^\e} \lp l^\e \rp
\end{equation}
is a normalization constant. We may now state the main result of this section.

\begin{prop}\label{cvint}
The law of the random variable
$$\lp \s_n, \big( m_{(n)}^\e\big)_{\e\in \vec E(\s_n)}, \big(\sigma_{(n)}^\e\big)_{\e\in \vec E(\s_n)}, \big( l_{(n)}^v\big)_{v\in V(\s_n)}, u_{(n)} \rp$$
converges weakly toward the probability $\mu$.
\end{prop}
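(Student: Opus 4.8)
The plan is to establish the weak convergence by combining the exact formula \eqref{proba} for the probability of the structure with the local limit theorems from Proposition~\ref{petrov}, proceeding scheme by scheme. The first step is to observe that there are only finitely many schemes in $\Sg$, so it suffices to fix a scheme $\s$ and show that, for every bounded continuous test function, the contribution of $\{\s_n = \s\}$ with the rescaled integers converges to the corresponding $\s$-term in $\mu$. Since the rescaled quantities live in a fixed finite-dimensional space once $\s$ is fixed, weak convergence amounts to convergence of the ``density'' appearing in \eqref{proba} against Lebesgue measure on the relevant slices, i.e.\ a Riemann-sum type argument: sum over $(m,\sigma,l,u)\in\CC_n(\s)$ against a test function, rewrite the sum as an integral over a discretized version of $\S^\s$, and pass to the limit using dominated convergence.

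The second step is the pointwise asymptotics of the summand in \eqref{proba}. Writing $2m^\e + \sigma^\e \approx 2n\, m^\e_{(n)}$ and $\sigma^\e \approx \sqrt{2n}\,\sigma^\e_{(n)}$, apply part~(1) of Proposition~\ref{petrov} to the simple random walk $S$ (with $(\eta,h)=(1,2)$) to get
$$\frac{\sigma^\e}{2m^\e+\sigma^\e}\,\P(S_{2m^\e+\sigma^\e}=\sigma^\e) \;\sim\; \frac{1}{(2n)^{3/2}}\cdot\frac{\sigma^\e_{(n)}}{(m^\e_{(n)})^{3/2}\,m^\e_{(n)}}\cdot\text{(const)}\cdot\ga\!\Big(\frac{\sigma^\e_{(n)}}{\sqrt{m^\e_{(n)}}}\Big),$$
which one recognizes, after collecting constants, as $-\ga'_{m^\e}(\sigma^\e)$ up to the right power of $n$; and apply part~(1) again to the Motzkin walk $M$ (with $(\eta,h)=(\sqrt{2/3},1)$) to get $\P(M_{\sigma^\e}=l^\e)\sim (\text{const})\,(2n)^{-1/4}\,\ga_{\sigma^\e}(l^\e)$. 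Counting the powers of $n$ and of $12$ against the known asymptotics $|\T_n|$ (obtainable from \eqref{qt} together with $|\Q_n|\sim t_g n^{\frac52(g-1)}12^n$, or directly as a corollary of this very computation) shows that all factors of $n$ and $12$ cancel, leaving exactly the integrand defining $\mu$, with $\Upsilon$ playing the role of $|\T_n|$ suitably normalized; the indicator $\1{\{m^{\e_*}\ge0,\ u<m^{\e_*}\}}$ comes from the constraints $m^{\e_*}=1-\sum_{\e\ne\e_*}m^\e\ge0$ (after rescaling $\sum_\e(m^\e+\tfrac12\sigma^\e)=n$, the $\sigma$-part being of lower order) and $0\le u\le 2m^{\e_*}+\sigma^{\e_*}-1$.

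The third step is to justify the interchange of limit and summation, i.e.\ to produce an integrable dominating function; this is where part~(2) of Proposition~\ref{petrov} enters, giving polynomial decay $\P(S_k=j)\lesssim k^{-1/2}(1+|j|/\sqrt k)^{-r}$ and likewise for $M$, with $r$ as large as we please. Using these bounds uniformly in $(m,\sigma,l,u)$, together with the fact that the sum over $\sigma$ of $\ga'$-type terms and the sum over $l$ of $\ga$-type terms converge, one bounds the discrete sum by a convergent integral independent of $n$; the simplex constraint keeps the $m$-variables bounded. Finally, one must check that the non-dominant schemes contribute negligibly: a scheme with a vertex of degree $\ge 4$ has strictly fewer edges $|\vec E(\s)|<4g-2$, hence strictly fewer forest-and-bridge factors, so its total contribution is a lower order of $n$ and vanishes in the limit — this is exactly why only $\Sg^*$ appears in $\mu$.

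The main obstacle I expect is the bookkeeping in the second step: correctly matching every power of $n$, every combinatorial constant (the $2^{2m+\sigma}$, the $12^n$, the $3^\sigma$ hidden in the Motzkin counts, the factors $\eta/h$ from Proposition~\ref{petrov}), and the normalizations so that the limit is genuinely the probability measure $\mu$ with the stated constant $\gamma=(8/9)^{1/4}$ — in particular verifying that the same normalization $\Upsilon$ that makes $\mu$ a probability is forced by $|\T_n|$, rather than having to be computed independently. The analytic content (local limit theorem plus dominated convergence) is routine; it is the exact-constant accounting, and the clean identification of the indicator region for $(m^{\e_*},u)$, that require care.
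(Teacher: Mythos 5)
Your proposal follows essentially the same route as the paper: the exact formula \eqref{proba}, a Riemann-sum rewriting of the sum over $\CC_n(\s)$, Proposition~\ref{petrov} for the pointwise limits, dominated convergence, the Euler-characteristic argument selecting the dominant schemes, and the normalization obtained by taking $\varphi\equiv1$ (which is indeed how the paper gets \eqref{tg2} as a by-product rather than importing the asymptotics of $|\T_n|$). The one step you understate is the dominating function: integrating the bound on the Motzkin factor over each $l^v$ produces a factor of order $\sqrt{\sigma^{\e}}$ per vertex, and the paper must injectively assign a half-edge to each vertex of $V(\s)\bs\{\e_*^-\}$ to absorb these before integrating in $\sigma$ and then $m$ — this bookkeeping (together with the small slip $|\vec E(\s)|<4g-2$, which should read $|E(\s)|<6g-3$) is the only substantive gap, and it does not change the conclusion.
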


\begin{pre}
Let $\varphi$ be a bounded continuous function on the set
$$\bigcup_{\s\in\Sg} \{\s\}\times\Delta_\s\times (\R_+^*)^{\vec E(\s)}\times \R^{V(\s)}\times [0,1].$$
We need to look at the convergence of
$$\mathbb{E}_n \de \E{\varphi\lp \s_n, \big( m_{(n)}^\e\big)_{\e\in \vec E(\s_n)}, \big(\sigma_{(n)}^\e\big)_{\e\in \vec E(\s_n)}, \big( l_{(n)}^v\big)_{v\in V(\s_n)}, u_{(n)} \rp}.$$

1) Let $n\in \N$. For the time being, we identify $(m,\sigma,l,u) \in \Zp^{\vec E(\s)\bs\{\e_*\}}\times \N^{\ori E(\s)}\times \Z^{V(\s)\bs\{\e^-_*\}}\times \Zp$ with an element of $\Zp^{\vec E(\s)}\times \N^{\vec E(\s)}\times \Z^{V(\s)}\times \Zp$ by (\theeqit.2), (\theeqit.3), and
\begin{itemize}
	\item $m^{\e_*}(n) \de n - \sum_{\e\in \vec E(\s)\bs\{\e_*\}} m^\e - \sum_{\e\in \ori E(\s)}\sigma^\e$, \hspace*{\fill}(\theeqit.1')
\end{itemize}
instead of (\theeqit.1), which may be seen as its discrete counterpart. This is an element of $\CC_n(\s)$ provided that $m^{\e_*}(n)\ge 0$ and $0\le u < 2m^{\e_*}(n) + \sigma^{\e_*}$. Beware that here the definition of $m^{\e_*}(n)$ actually depends on $n$. It also depends on $\sigma$ but we chose not to let it figure in the notation for space reasons.

For any vector $x =(x_1,x_2,\dots, x_k) \in \R^k$, we note $\lf x \rf$ the vector $\lp \lf x_1 \rf, \lf x_2 \rf,\dots, \lf x_k \rf\rp \in \Z^k$. Note that for $m\in \R_+^{\vec E(\s)\bs\{\e_*\}}$, $\lf m \rf^{\e_*}\!(n)$ is well defined through (\theeqit.1'). Until further notice, we will write $\lf m\rf^{\e_*}$ for $\lf m\rf^{\e_*}\!(n)$, which $n$ being implicit. So when we write $\lf m\rf$, we mean the vector such that $\lf m\rf^\e = \lf m^\e\rf$ for $\e \neq \e_*$ and $\lf m\rf^{\e_*} = \lf m\rf^{\e_*}\!(n)$. 
Using \eqref{proba}, we find
\begin{align*}
\mathbb{E}_n &= \frac{12^n}{\lm \T_n \rm}\, \sum_{\s\in\Sg}\ \sum_{(m,\sigma,l,u) \in \CC_n(\s)}
		\varphi\lp \s,\frac {2m+\sigma} {2n},\frac \sigma {\sqrt{2n}},\frac l \g,\frac u {2n}\rp
		\prod_{\e\in{\vec E}(\s)}\! h_1(m^\e,\sigma^\e)
		\prod_{\e\in\ori{E}(\s)}\! h_2(\sigma^\e,l^\e),
\end{align*}
where
$$h_1(m^\e,\sigma^\e) \de \frac {\sigma^\e} {2m^\e+\sigma^\e} \, Q^S_{2m^\e+\sigma^\e}(\sigma^\e) \, \1{\lb \sigma^\e\ge 1 \rb} \sand
  h_2(\sigma^\e,l^\e) \de Q^M_{\sigma^\e}(l^\e) \, \1{\lb \sigma^\e\ge 1 \rb}.$$
Writing the sum over $\CC_n(\s)$ in the form of an integral, we obtain
$$\mathbb{E}_n	= \frac{12^n}{\lm \T_n \rm} \sum_{\s\in\Sg} \int_{\tilde{\mathcal{S}}^\s} \! d\tilde{\mathcal{L}}^\s \ \1{\mathcal{E}_n^\s}(m,\sigma,u)\ \varphi_{\lf  \cdot \rf}
		\prod_{\e\in{\vec E}(\s)} h_1({\lf m \rf}^\e, \lf \sigma \rf^\e) \prod_{\e\in\ori{E}(\s)} h_2\big(\lf \sigma \rf^\e,\lf l \rf^{\e^+}\!\!- {\lf l \rf}^{\e^-}\big),$$
where $\varphi_{\lf \cdot \rf}$ stands for
$$\varphi\bigg( \s,\frac {2\lf m \rf + \lf \sigma \rf} {2n},\frac {\lf \sigma \rf} {\sqrt{2n}},\frac {\lf l \rf} \g, \frac {\lf u \rf} {2n}\bigg),$$
$d\tilde{\mathcal{L}}^\s$ is the Lebesgue measure on the set
$\tilde{\mathcal{S}}^\s \de \R_+^{\vec E(\s)\bs\{\e_*\}}\times(\R_+^*)^{\ori E(\s)}\times \R^{V(\s)\bs\{\e^-_*\}}\times\R_+$
and
$$\mathcal{E}_n^\s \de \lb (m,\sigma,u)\in \R_+^{\vec E(\s)\bs\{\e_*\}}\times(\R_+^*)^{\ori E(\s)}\times\R_+ \,:\, 
				\lf m\rf^{\e_*}\!(n)\ge 0,\, u < 2\lf m \rf^{\e_*}\!(n) + \lf \sigma \rf^{\e_*}\rb.$$
Finally, the changes of variables $m \mapsto nm$, $\sigma \mapsto \sqrt{2n}\,\sigma$, $l \mapsto \g \,l$, and $u \mapsto 2nu$ yields
\begin{align}
\mathbb{E}_n	&= \frac{12^n}{\lm \T_n \rm} \sum_{\s\in\Sg} n^{\frac{|E(\s)|-g}2}\, 2^{\frac{|E(\s)|-3g +2}2}\, 3^g
		\int_{\mathcal{S}^\s} d\mathcal{L}^\s \ 
		A_n^\s \prod_{\e\in{\vec E}(\s)} B_n^{\s,\e} \prod_{\e\in\ori{E}(\s)} C_n^{\s,\e}\label{intg}
\end{align}
where
\begin{align*}
A_n^\s		&= \1{\mathcal{E}_n^\s}(nm,\sqrt{2n}\,\sigma,2n\,u)\ 
		\varphi\bigg( \s,\frac{2\lf n m \rf+ \lfloor\sqrt{2n}\,\sigma\rfloor}{2n},\frac{\lfloor\sqrt{2n}\,\sigma\rfloor}{\sqrt{2n}},
					\frac{\lfloor\g l\rfloor} \g,\frac {\lf 2nu \rf} {2n}\bigg),\\
B_n^{\s,\e}	&= n\, h_1\lp{\lf nm \rf}^\e, \lfloor \sqrt{2n}\,\sigma \rfloor^\e\rp,\\
C_n^{\s,\e}	&= \g\,h_2\Big(\lfloor \sqrt{2n}\,\sigma \rfloor^\e,\lfloor \g l \rfloor^{\e^+}\!\!- {\lfloor \g l \rfloor}^{\e^-}\Big).   		
\end{align*}

\bigskip

2) We are now going to see that every integral term of the sum appearing in the equation \eqref{intg} converges, by dominated convergence. We no longer use (\theeqit.1') but (\theeqit.1) in the identification (\theeqit). Because
$$\frac {2 \lf nm \rf^{\e_*}\!(n)+\lfloor \sqrt{2n} \, \sigma^\e \rfloor}{2n} = 1 - \hspace{-3mm} \sum_{\e\in \vec E(\s)\bs\{\e_*\}}\hspace{-3mm} \frac {2 \lfloor nm^\e \rfloor + \lfloor \sqrt{2n} \, \sigma^\e \rfloor}{2n}
			\ton 1 - \hspace{-3mm} \sum_{\e\in \vec E(\s)\bs\{\e_*\}} \hspace{-3mm} m^\e = m^{\e_*},$$
we see that
$A_n^\s \to \1{\lb m^{\e_*}\!\ge\, 0,\,u< m^{\e_*}\rb}\ \varphi\lp \s, m,\sigma, l,u \rp$. Thanks to Proposition \ref{petrov}, we then obtain
$$B_n^{\s,\e} \to -\ga'_{m^\e}(\sigma^\e) \sand C_n^{\s,\e} \to \ga_{\sigma^\e}(l^\e).$$

It remains to prove that the convergence is dominated. To that end, we use the second part of Proposition \ref{petrov}. In the remainder of the proof, $C$ will denote a constant in $(0,\infty)$, the value of which may differ from line to line. First, notice that
$$A_n^\s \le \| \varphi \|_\infty.$$
Then, applying Proposition \ref{petrov} with $r =3$, we obtain, for $n \ge 2$,
\small
\begin{align*}
B_n^{\s,\e} &= \frac {2n \, \lfloor\sqrt{2n}\,\sigma\rfloor^\e} {\big( 2{\lf nm \rf}^\e+{\lfloor\sqrt{2n}\,\sigma\rfloor}^\e\big)^{\frac32}}\ 
		\frac12 \big( 2{\lf nm \rf}^\e+{\lfloor\sqrt{2n}\,\sigma\rfloor}^\e \big)^{\frac12}\,
		Q^S_{2{\lf nm \rf}^\e+{\lfloor\sqrt{2n}\,\sigma\rfloor}^\e} \big( \lfloor\sqrt{2n}\,\sigma\rfloor^\e\big)\ 
		\1{\lb \sqrt{2n}\,\sigma^\e\ge 1\rb}\\
	&\le C \, \lp \frac {\lfloor\sqrt{2n}\sigma\rfloor^\e} {\sqrt {2n}} \rp^{-2}
		\frac {\lp \frac {\lp\lfloor\sqrt{2n}\sigma\rfloor^\e\rp^2} {2 \lf n m \rf^\e + \lfloor\sqrt{2n}\sigma\rfloor^\e}\rp^{3/2}}
		  {1 + \lp \frac {\lp\lfloor\sqrt{2n}\sigma\rfloor^\e\rp^2} {2 \lf n m \rf^\e + \lfloor\sqrt{2n}\sigma\rfloor^\e}\rp^{3/2}}\ \1{\lb \sqrt{2n}\,\sigma^\e\ge 1\rb}\\
%
%
	& \le C \, (m^\e)^{-1} \wedge \lp \sigma^\e \rp^{-2},
\end{align*}
\normalsize
where we used the fact that for $x \ge 1$, $\lf x \rf^{-1} \le 2 /x$. The case $\lf nm \rf =0$ is to be treated separately, and is left to the reader. Applying now Proposition \ref{petrov} with $r =2$, we find that, for $n \ge 2$,
\begin{align*}
C_n^{\s,\e} &= \frac {(2n)^{\frac 14}}{\big({\lfloor\sqrt{2n}\sigma\rfloor}^\e \big)^{\frac12}}\ 
	\sqrt{\frac23}\big({\lfloor\sqrt{2n}\sigma\rfloor}^\e \big)^{\frac12} \, Q^M_{{\lfloor\sqrt{2n}\sigma\rfloor}^\e}\Big({\lfloor\g l\rfloor}^{\e^+}\!\!-{\lfloor\g l\rfloor}^{\e^-}\Big)\ \1{\lb \sqrt{2n}\,\sigma^\e\ge 1\rb}\\
	&\le \frac C{\sqrt {\sigma^\e}} \lp{1+ \frac 3 2 \frac {\Big({\lfloor\g l\rfloor}^{\e^+}\!\!-{\lfloor\g l\rfloor}^{\e^-} \Big)^2} {{\lfloor\sqrt{2n}\sigma\rfloor}^\e}}\rp^{-1}\\
	&\le \frac C{\sqrt {\sigma^\e}} \lp{1+ \frac {\big( \big| l^{\e^+}\!\!-l^{\e^-} \big| - 1 \big)^2 } {\sigma^\e}\1{\lb\lm l^{\e^+}\!-l^{\e^-} \rm > 1\rb}}\rp^{-1}.
\end{align*}

Any integrand in the equation \eqref{intg} is then bounded by
\begin{equation}
\label{integrand}
C \prod_{\e\in\vec{E}(\s)} (m^\e)^{-1} \wedge \lp \sigma^\e \rp^{-2} \prod_{\e\in\ori{E}(\s)}\lp \sigma^\e \rp^{-1/2} \lp{1+ \frac {\big( \big| l^{\e^+}\!\!-l^{\e^-} \big| - 1 \big)^2 } {\sigma^\e}\1{\lb\lm l^{\e^+}\!-l^{\e^-} \rm > 1\rb}}\rp^{-1}.
\end{equation}
We have to see that this expression is integrable. First, note that we integrate with respect to~$u$ on a compact set.
Moreover,
\begin{align*}
\int_{\R} dl^{\e^-}\lp{1+ \frac {\big( \big| l^{\e^+}\!\!-l^{\e^-} \big| - 1 \big)^2 } {\sigma^\e}\1{\lb\lm l^{\e^+}\!-l^{\e^-} \rm > 1\rb}}\rp^{-1} 
		&= 2 + \pi \sqrt{\sigma^\e}\\ &\le C \, 1 \vee \sqrt {\sigma^\e},
\end{align*}
and we have the same bound if we integrate with respect to $l^{\e^+}$ instead of $l^{\e^-}$.

It is possible to injectively associate with every vertex $v\in V(\s)\bs\{\e_*^-\}$ a half-edge $\e_v\in \ori E(\s)$ such that $v$ is an extremity of $\e_v$. Let us call $E_V$ the range of such an injection.
The integral of the expression \eqref{integrand} with respect to $u$ and $l$ is then bounded by
$$C \prod_{\e\in\vec{E}(\s)} (m^\e)^{-1} \wedge \lp \sigma^\e \rp^{-2}
		\prod_{\e\in E_V} 1 \vee \lp \sigma^\e \rp^{-1/2}
		\!\prod_{\e\in\ori{E}(\s)\bs E_V}\!\! \lp \sigma^\e \rp^{-1/2}.$$
Finally, it is easy to see that this expression, once integrated\footnote{Be careful that, when integrating with respect to $\sigma^\e$ for some $\e\in\ori E(\s)$, both half-edges $\e$ and $\bar\e$ are to be considered.} with respect to $\sigma$, is bounded by $C \prod_{\e\in\vec{E}(\s)} (m^\e)^{-7/8}$, which is integrable with respect to $m$.

\bigskip

3) We just saw that the integral expression in \eqref{intg} converges toward
$$\int_{\mathcal{S}^\s} d\mathcal{L}^\s \ 
		\1{\lb m^{\e_*}\!\ge\, 0,\,u< m^{\e_*}\rb}\ \varphi\lp \s, m,\sigma, l,u \rp
		\prod_{\e\in{\vec E}(\s)} - \ga'_{m^\e} \lp \sigma^\e \rp
		\prod_{\e\in\ori{E}(\s)} \ga_{\sigma^\e} \lp l^\e \rp.$$
		
The dominant terms in the equation \eqref{intg} are the ones for which $|E(\s)|$ is the largest. The corresponding schemes are exactly the dominant ones: for a scheme, $2\,|E(\s)| = \sum_{v\in V(\s)} \text{deg}(v) \ge 3\,|V(\s)|$ and the Euler characteristic formula gives $|E(\s)|\le 6g-3$, the equality being reached when $2\,|E(\s)| = 3\,|V(\s)|$, that is when $\s$ is dominant. Note that this situation is exactly the same as the one encountered in \cite{chapuy08sum,chapuy07brm,miermont09trm}.

Hence, if $\varphi$ is momentarily chosen to be constantly equal to $1$, we obtain that
\begin{equation}\label{tg2}
\lm \T_n \rm \sim 12^n \, n^{\frac{5g-3}2}\, 2^{\frac{3g -1}2}\, 3^g \ \Upsilon
\end{equation}
where $\Upsilon$ is defined by \eqref{upsi}. Finally,
$$\mathbb{E}_n \ton \frac1\Upsilon \sum_{\s\in\Sg^*} \int_{\mathcal{S}^\s} d\mathcal{L}^\s \ 
		\1{\lb m^{\e_*}\ge 0,\,u< m^{\e_*}\rb}\ \varphi\lp \s, m,\sigma, l,u \rp
		\prod_{\e\in{\vec E}(\s)} - \ga'_{m^\e} \lp \sigma^\e \rp
		\prod_{\e\in\ori{E}(\s)} \ga_{\sigma^\e} \lp l^\e \rp,$$
which is the result we sought.
\end{pre}


\section{Convergence of the Motzkin bridges and the forests}\label{secmf}

Conditionally given the vector
$$\lp \s_n, \lp m_{n}^\e\rp_{\e\in \vec E(\s_n)}, \lp\sigma_{n}^\e\rp_{\e\in \vec E(\s_n)}, \lp l_{n}^v\rp_{v\in V(\s_n)} \rp,$$
the Motzkin bridges $\mM_n^\e$, $\e\in\ori E (\s_n)$ and the well-labeled forests $(\f_n^\e,\l_n^\e)$, $\e \in \vec E(\s_n)$ are independent and
\begin{itemize}
	\item for every $\e\in \ori E(\s_n)$, $\mM_n^\e$ is uniformly distributed over the set $\M_{[0,\sigma_n^\e]}^{0\to l_n^\e}$ of Motzkin bridges on $[0,\sigma_n^\e]$ from $0$ to $l_n^\e = l_n^{\e^+} - l_n^{\e^-}$,
	\item for every $\e\in \vec E(\s_n)$, $(\f_n^\e,\l_n^\e)$ is uniformly distributed over the set $\fF_{\sigma_n^\e}^{m_n^\e}$ of well-labeled forests with $\sigma_n^\e$ trees and $m_n^\e$ tree edges. 
\end{itemize}

The convergence of Motzkin bridges is already known. We will properly state the result we need in Lemma \ref{bn}.

The convergence of a uniform well-labeled tree with $n$ edges is well-known, see \cite{chassaing04rpl}, for example. We will need a conditioned version of this result: roughly speaking, instead of looking at one large tree with~$n$ edges uniformly labeled such that the root label is $0$, we look at a forest with~$n$ edges, a number of trees growing like $\sqrt n$, that are uniformly labeled provided the root label of every tree is $0$.
For that matter, we will adapt the arguments provided in \cite[Chapter 6]{legall94mbp}.

\bigskip

Let us define the space $\K$ of continuous real-valued functions on $\R_+$ killed at some time:
$$\K \de \bigcup_{x \in \R_+} \C([0, x],\R).$$

For an element $f \in \K$, we will define its lifetime $\sigma(f)$ as the only $x$ such that $f \in \C([0, x],\R)$. We endow this space with the following metric:
$$d_\K (f,g) \de |\sigma(f) - \sigma(g) | + \sup_{y \ge 0} \lm f\big(y\wedge \sigma(f)\big)-g\big(y\wedge\sigma(g)\big)\rm.$$

\bigskip

Recall that we use the notation $\underline{X}(s)$ for the infimum up to time $s$ of any process $X \in \K$. Throughout this section, $m$ and $\sigma$ will denote positive real numbers and $l$ will be any real number.

\subsection{Brownian bridge and first-passage Brownian bridge}

The results we show in this section are part of the probabilistic folklore. Because of the scarceness of the references, we will give complete proofs for the sake of self-containment.

We define here the Brownian bridge $B_{[0,m]}^{0\to l}$ on $[0,m]$ from $0$ to $l$ and the first-passage Brownian bridge $F_{[0,m]}^{0\to -\sigma}$ on $[0,m]$ from $0$ to $-\sigma$. Informally, $B_{[0,m]}^{0 \to l}$ and $F_{[0,m]}^{0 \to -\sigma}$ are a standard Brownian motion $\beta$ on $[0,m]$ conditioned respectively on the event $\{\beta_m = l\}$ and on hitting $-\sigma$ for the first time at time $m$. Of course, both theses events occur with probability $0$ so we need to define these objects properly. There are several equivalent ways do do so (see for example \cite{billingsley68cpm, revuz99cma, bertoin03ptf}).

Remember that we call $\ga_a$ the Gaussian density with variance $a$ and mean $0$, as well as $\ga_a'$ its derivative. Let $(\beta_t)_{0\le t\le m}$ be a standard Brownian motion. As explained in \cite[Proposition~1]{fitzsimmons93mbc}, the law of the Brownian bridge is characterized by $B_{[0,m]}^{0\to l}(m) = l$ and the formula
\begin{equation}\label{b}
\E{f \lp \lp B_{[0,m]}^{0\to l}(t)\rp_{0 \le t \le m'} \rp} = \E{f \lp \lp\beta_t\rp_{0 \le t \le m'} \rp \frac {\ga_{m - m'}(l-\beta_{m'})}{\ga_m(l)}}
\end{equation}
for all bounded measurable function $f$ on $\K$, for all $0\le m' < m$.

We define the law of the first-passage Brownian bridge in a similar way, by letting
\begin{equation}\label{f}
\E{f \lp \lp F_{[0,m]}^{0\to-\sigma}(t)\rp_{0 \le t \le m'} \rp} = \E{f \lp \lp \beta_t\rp_{0 \le t \le m'} \rp \frac {\ga_{m - m'}'(-\sigma - \beta_{m'})}{\ga_m'(-\sigma)}\,
\1{\lb {\rlap{\scriptsize \hspace{0.2mm}{\underline{\hspace{1.2mm}}}}\beta}_{m'} > - \sigma\rb}}
\end{equation}
for all bounded measurable function $f$ on $\K$, for all $0\le m' < m$, and
$$F_{[0,m]}^{0\to -\sigma}(m) = -\sigma.$$

These formulae set the finite-dimensional laws of the first-passage Brownian bridge. It remains to see that it admits a continuous version. Because its law is absolutely continuous with respect to the Wiener measure on every $[0,m']$, $m' < m$, the only problem arises at time $m$. We will, however, use the Kolmogorov lemma \cite[Theorem 1.8]{revuz99cma} to obtain the continuity of the whole trajectory. We will see during the proof of Lemma \ref{fn} that, as for the Brownian motion, the trajectories of the first-passage bridge are $\alpha$-H\"older for every $\alpha < 1/2$.

\bigskip
The motivation of these definitions may be found in the following lemma:

\begin{lem}
Let $(\beta_t)_{0\le t \le m}$ be a standard Brownian motion. Let $(B^\epsilon_t)_{0\le t \le m}$ and $(F^\epsilon_t)_{0\le t \le m}$ have the law of $\beta$ conditioned respectively on the events
$$\lb |\beta_m - l| < \eps \rb \sand \lb \beta_m < - \sigma + \eps,\, {\rlap{\hspace{0.3mm}\underline{\hspace{1.4mm}}}\beta}_{m} > - \sigma - \eps \rb.$$

Then, as $\epsilon$ goes to $0$,
$$B^\eps \to B_{[0,m]}^{0\to l} \sand F^\eps \to F_{[0,m]}^{0\to -\sigma}$$
in law in the space $\lp\C([0,m], \R), \| \cdot \|_\infty \rp$.
\end{lem}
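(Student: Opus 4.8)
The plan is to treat the two assertions separately, since the Brownian bridge part is completely standard and the first-passage bridge part is the one requiring care. For $B^\eps$, I would fix $0 \le m' < m$ and a bounded continuous functional $f$ depending only on the trajectory restricted to $[0,m']$; by conditioning a standard Brownian motion $\beta$ on $\{|\beta_m - l| < \eps\}$ and writing the conditional expectation by disintegrating on $\mathcal{F}_{m'}$, one gets $\E{f((\beta_t)_{t\le m'}) \, g_\eps(\beta_{m'})}$ where $g_\eps(x) = \P(|\beta_m - l| < \eps \mid \beta_{m'} = x) / \P(|\beta_m - l|<\eps)$ and, as $\eps \to 0$, $g_\eps(x) \to \ga_{m-m'}(l - x)/\ga_m(l)$ uniformly on compacts (ratio of Gaussian densities, continuity and positivity at $l$). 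A uniform-integrability argument — the $g_\eps$ are dominated by a fixed integrable function against the law of $\beta_{m'}$, using that $\ga_{m-m'}$ is bounded and $\beta_{m'}$ has a Gaussian tail — gives convergence of these finite-dimensional-type functionals to the right-hand side of \eqref{b}; together with tightness in $(\C([0,m],\R),\|\cdot\|_\infty)$, which holds because $B^\eps$ is absolutely continuous with respect to the Wiener measure on $[0,m']$ with densities bounded uniformly in $\eps$ for $m'$ slightly less than $m$ (hence the moment bounds needed for Kolmogorov's tightness criterion are uniform on $[0, m']$, and a separate small argument handles the last sliver $[m', m]$ near the pinned endpoint), one concludes $B^\eps \to B_{[0,m]}^{0 \to l}$.

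For $F^\eps$ the same strategy applies but the conditioning event involves the running infimum as well. Here I would again fix $0 \le m' < m$ and a functional $f$ of the path on $[0,m']$, and disintegrate on $\mathcal{F}_{m'}$: writing $A_\eps = \{\beta_m < -\sigma + \eps,\ \ll\beta_m > -\sigma - \eps\}$, one has $\P(A_\eps \mid \mathcal{F}_{m'}) = \1{\ll\beta_{m'} > -\sigma - \eps}\,\psi_\eps(\beta_{m'})$ where $\psi_\eps(x) = \P(\beta_{m-m'} + x < -\sigma + \eps,\ \inf_{[0,m-m']}(\beta + x) > -\sigma - \eps)$. By the reflection principle / the classical first-passage density formula, $\psi_\eps(x)/\eps \to c\,\ga'_{m-m'}(-\sigma - x)\1{x > -\sigma}$ as $\eps \to 0$ (up to a constant absorbed in normalisation), uniformly on compacts away from $x = -\sigma$; and on the event $\{\ll\beta_{m'} > -\sigma\}$ we have $\beta_{m'} > -\sigma$, so the indicator $\1{\ll\beta_{m'} > -\sigma - \eps}$ stabilises to $\1{\ll\beta_{m'} > -\sigma}$. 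Normalising by $\P(A_\eps)$ and passing to the limit — with domination coming from the second part of Proposition~\ref{petrov}-type Gaussian tail estimates (here just the explicit form of $\ga'$, which is bounded, times the Gaussian tail of $\beta_{m'}$) — yields exactly \eqref{f}, hence convergence of the finite-dimensional laws of $F^\eps$ to those of $F_{[0,m]}^{0 \to -\sigma}$.

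It then remains to upgrade finite-dimensional convergence to convergence in law on $(\C([0,m],\R),\|\cdot\|_\infty)$, which requires tightness. On $[0,m']$ with $m' < m$ this is immediate from absolute continuity with respect to the Wiener measure (densities bounded uniformly in $\eps$), so Kolmogorov's tightness criterion applies with constants not depending on $\eps$. The one genuinely delicate point — and I expect this to be the main obstacle — is controlling the trajectories near the terminal time $m$, where the density with respect to the Wiener measure blows up: one must show $\lim_{m' \uparrow m}\limsup_{\eps \to 0}\P(\sup_{m' \le s \le m}|F^\eps_s - F^\eps_{m'}| > \delta) = 0$ for every $\delta > 0$. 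For the first-passage bridge this is controlled by the fact that, near $m$, the process is squeezed between its running infimum (which is close to $-\sigma$) and a barrier at $-\sigma + \eps$, forcing small oscillations; quantitatively one uses the explicit transition densities of \eqref{f} together with a maximal inequality, analogous to the treatment of the Brownian bridge endpoint. Once this near-endpoint tightness is in hand, Prohorov's theorem plus the identification of finite-dimensional limits gives the two claimed convergences, completing the proof.
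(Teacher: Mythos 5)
Your overall plan — condition on $\mathcal F_{m'}$ for $m'<m$, show that the ratio of conditional to unconditional densities converges to the right-hand sides of \eqref{b} and \eqref{f} by dominated convergence, then upgrade to functional convergence via tightness on $[0,m']$ plus a separate control of the endpoint sliver — is exactly the spirit of what the paper does for Lemmas~\ref{bn} and~\ref{fn}, and the paper itself declines to spell out this lemma, referring to ``similar methods.'' Your treatment of $B^\eps$ is fine: uniform boundedness of the conditional density ratio on $[0,m']$ gives Kolmogorov tightness with $\eps$-uniform constants, and the endpoint sliver can be handled by the same three-term splitting the paper uses (Hölder bound on a high-probability compact set, plus the $[0,m']$ convergence) or by time-reversal.

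The one genuine weakness is your proposed endpoint control for $F^\eps$. The ``squeezing'' heuristic is not correct as stated: on $[m',m]$ the path is only constrained from below by $-\sigma-\eps$, and its \emph{terminal} value is pinned near $-\sigma$, but there is no upper barrier at $-\sigma+\eps$ and the running infimum $\underline\beta_s$ for $m'\le s<m$ need not be close to $-\sigma$ at all. The conditioned path may well make a large upward excursion on $[m',m]$ before returning; ruling this out requires a quantitative estimate, not a pinching argument. The paper handles the analogous issue in Lemma~\ref{fn} in a way you do not mention: via the Bertoin--Chaumont--Pitman transformation (Proposition~\ref{bcp}), which represents the discrete first-passage bridge as a cyclic shift of a bridge and thereby bounds its Hölder norm by twice that of the bridge; this imports the Kolmogorov-criterion tightness of Lemma~\ref{lembn} wholesale, including at the terminal point. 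A continuous analogue of this transfer (or, equivalently, moment bounds for increments of $F^\eps$ uniform in $\eps$, in the style of Lemma~\ref{lemb}) is what you actually need to replace the squeezing claim. Your fallback phrase about ``explicit transition densities together with a maximal inequality'' points in the right direction, but as written it is a placeholder rather than an argument, and the part you did try to justify concretely is the part that fails.
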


The proof of this lemma uses similar methods as those we will use for Lemma \ref{bn} so we let the details to the reader. In what follows, we will use the following lemma, which is a consequence of the Rosenthal Inequality \cite[Theorem 2.9 and 2.10]{petrov95ltp}:

\begin{lem}\label{rosenthal}
Let $X_1$, $X_2$,\dots $X_k$ be independent centered random variables and $q \ge 2$. Then, there exists a constant $c(q)$ depending only on $q$ such that 
$$\E{\lm \sum_{i=1}^k X_i \rm^{q}} \le c(q) \, k^{\frac q 2 -1} \sum_{i=1}^k \E{\lm X_i \rm^{q}}.$$

In particular, if $X_1$, $X_2$,\dots $X_k$ are i.i.d.,
$$\E{\lm \sum_{i=1}^k X_i \rm^{q}} \le c(q) \, k^{\frac q 2} \,\E{\lm X_1 \rm^{q}}.$$
\end{lem}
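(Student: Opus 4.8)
The statement to prove is Lemma~\ref{rosenthal}: the Rosenthal inequality for sums of independent centered random variables, namely that for $q\ge 2$ there is $c(q)$ with
$$\E{\lm \sum_{i=1}^k X_i \rm^{q}} \le c(q)\, k^{\frac q2-1}\sum_{i=1}^k \E{\lm X_i\rm^q},$$
with the i.i.d.\ specialization following immediately by taking $\E{\lm X_i\rm^q}=\E{\lm X_1\rm^q}$ for all $i$.

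The plan is to deduce this from the standard Rosenthal inequality as cited in \cite[Theorems 2.9 and 2.10]{petrov95ltp}, which states that there is a constant $A(q)$ such that
$$\E{\lm \sum_{i=1}^k X_i\rm^q}\le A(q)\lp \sum_{i=1}^k \E{\lm X_i\rm^q} + \lp\sum_{i=1}^k \E{X_i^2}\rp^{q/2}\rp.$$
So the only work is to bound the second term on the right by the first (up to a $k^{q/2-1}$ factor). First I would apply the power-mean (or H\"older) inequality to the $k$-term sum $\sum_{i=1}^k \E{X_i^2}$: since $x\mapsto x^{q/2}$ is convex for $q\ge 2$,
$$\lp\sum_{i=1}^k \E{X_i^2}\rp^{q/2} = k^{q/2}\lp\frac1k\sum_{i=1}^k \E{X_i^2}\rp^{q/2}\le k^{q/2}\cdot\frac1k\sum_{i=1}^k \lp\E{X_i^2}\rp^{q/2} = k^{q/2-1}\sum_{i=1}^k \lp\E{X_i^2}\rp^{q/2}.$$
Then, since $q/2\ge 1$, Jensen's inequality applied to $\E{\lm X_i\rm^2}\le \lp\E{\lm X_i\rm^q}\rp^{2/q}$ gives $\lp\E{X_i^2}\rp^{q/2}\le \E{\lm X_i\rm^q}$, so the second term is at most $k^{q/2-1}\sum_{i=1}^k \E{\lm X_i\rm^q}$. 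Since $k^{q/2-1}\ge 1$, the first term $\sum_{i=1}^k\E{\lm X_i\rm^q}$ is also bounded by $k^{q/2-1}\sum_{i=1}^k\E{\lm X_i\rm^q}$, and combining the two yields the claim with $c(q) = 2A(q)$ (or simply $c(q)=A(q)$ after absorbing the factor, whichever the cited constant allows).

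There is essentially no obstacle here: the lemma is a textbook consequence of the cited inequality, and the proof is the two elementary convexity estimates above. The only thing to be mildly careful about is the direction of Jensen/H\"older — one wants $\E{X^2}^{q/2}\le\E{|X|^q}$, which holds because $q/2\ge1$, and the power-mean step which converts the $q/2$-th power of a sum into a sum of $q/2$-th powers at the cost of $k^{q/2-1}$. For the final ``in particular'' clause, substituting $\E{\lm X_i\rm^q}=\E{\lm X_1\rm^q}$ collapses $\sum_{i=1}^k$ into a factor $k$, giving $c(q)\,k^{q/2-1}\cdot k\,\E{\lm X_1\rm^q} = c(q)\,k^{q/2}\E{\lm X_1\rm^q}$, exactly as stated. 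I would present the argument in this order: cite the Petrov form, do the power-mean bound on $\sum\E{X_i^2}$, do the Jensen bound $\E{X_i^2}^{q/2}\le\E{|X_i|^q}$, assemble, and note the i.i.d.\ corollary.
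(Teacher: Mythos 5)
Your proof is correct and takes the same route the paper intends: the paper simply states that the lemma ``is a consequence of the Rosenthal Inequality \cite[Theorems 2.9 and 2.10]{petrov95ltp}'' and leaves the deduction to the reader, and your two convexity steps (power-mean on $\sum_i \E{X_i^2}$ and Lyapunov/Jensen to get $(\E{X_i^2})^{q/2}\le\E{|X_i|^q}$) are exactly the omitted manipulations.
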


\subsubsection*{Discrete bridges}

We will see in this paragraph two lemmas showing that these two objects are the limits of their discrete analogs. These lemmas, in themselves, motivate our definitions of bridges and first-passage bridges. Let us begin with bridges.

We consider a sequence $(X_k)_{k \ge 0}$ of i.i.d. centered integer-valued random variables with a moment of order $q_0$ for some $q_0\geq 3$. We write $\eta^2 \de \var(X_1)$ its variance and $h$ its maximal span.
We define $\Sigma_i \de \sum_{k=0}^i X_k$ and still write $\Sigma$ its linearly interpolated version. Let $(m_n)\in \Zp^\N$ and $(l_n) \in \Z^\N$ be two sequences of integers such that 
$$m_{(n)} \de \frac{m_n}{n} \ton m \sand l_{(n)} \de \frac{l_n}{\eta \sqrt{n}} \ton l.$$

Let $(B_n(i))_{0\le i \le m_n}$ be the process whose law is the law of $(\Sigma_i)_{0\le i \le m_n}$ conditioned on the event
$$\{ \Sigma_{m_n}=l_n \},$$
which we suppose occurs with positive probability. We let
$$B_{(n)} \de \lp \frac {B_n(ns)}{\eta \sqrt{n}} \rp_{0\le s \le m_{(n)}}$$
be its rescaled version.

\begin{lem}\label{bn}
As $n$ goes to infinity, the process $B_{(n)}$ converges in law toward the process $B^{0 \to l}_{[0,m]}$, in the space $\lp \K, \, d_\K \rp$.
\end{lem}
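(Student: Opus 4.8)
The statement is a conditioned invariance principle (a Donsker theorem for bridges), and the natural strategy is the classical two-step argument: prove tightness of the laws of $B_{(n)}$ in $(\K, d_\K)$, then identify all subsequential limits via convergence of finite-dimensional distributions. The only genuinely delicate point is that we are conditioning on an event $\{\Sigma_{m_n} = l_n\}$ of probability $\asymp n^{-1/2}$, so every estimate must absorb a factor $Q^\Sigma_{m_n}(l_n)^{-1} \asymp \sqrt n$ coming from Bayes' formula; the local limit theorem (Proposition~\ref{petrov}) is precisely the tool that makes this controllable.

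\textbf{Finite-dimensional convergence.} Fix $0 \le s_1 < s_2 < \dots < s_p < m$ and a bounded continuous $g$ on $\R^p$. Writing $m_n' \de \lf n s_p \rf$ and using the Markov property of $(\Sigma_i)$ at time $m_n'$, one has
$$\E{g\Big( \tfrac{\Sigma_{\lf n s_1\rf}}{\eta\sqrt n},\dots,\tfrac{\Sigma_{\lf n s_p\rf}}{\eta\sqrt n}\Big)\,\Big|\,\Sigma_{m_n}=l_n} = \E{g\Big(\tfrac{\Sigma_{\lf ns_1\rf}}{\eta\sqrt n},\dots\Big)\,\frac{Q^\Sigma_{m_n - m_n'}\big(l_n - \Sigma_{m_n'}\big)}{Q^\Sigma_{m_n}(l_n)}}.$$
By Proposition~\ref{petrov}(1), $\sqrt{n}\,Q^\Sigma_{m_n}(l_n) \to \frac h\eta\,\ga_{m}(\eta l)\,\eta = \frac h\eta \ga_m(l)\cdot(\text{const})$ — more precisely $\frac\eta h\sqrt{m_n}\,Q^\Sigma_{m_n}(l_n)\to \ga(l/\sqrt m)$, so $\sqrt n\,Q^\Sigma_{m_n}(l_n)\to \frac h\eta\ga_m(\eta\sqrt{\cdot})$, and likewise $\sqrt n\,Q^\Sigma_{m_n-m_n'}(l_n-\Sigma_{m_n'})\to \frac h\eta\ga_{m-s_p}(l - \eta\sqrt{\cdot}\,\cdot)$ uniformly when $\Sigma_{m_n'}/(\eta\sqrt n)$ stays in a compact set, which it does with overwhelming probability by Donsker's theorem applied to the unconditioned walk. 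Combining this with the ordinary Donsker convergence $(\Sigma_{\lf nt\rf}/(\eta\sqrt n))_{t\le s_p}\Rightarrow(\beta_t)_{t\le s_p}$ and the uniform domination in Proposition~\ref{petrov}(2) to justify passing the ratio of densities through the limit, the right-hand side converges to
$$\E{g\big(\beta_{s_1},\dots,\beta_{s_p}\big)\,\frac{\ga_{m-s_p}(l-\beta_{s_p})}{\ga_m(l)}},$$
which is exactly the finite-dimensional law of $B^{0\to l}_{[0,m]}$ prescribed by \eqref{b}. The convergence $\sigma(B_{(n)}) = m_{(n)} \to m$ is immediate.

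\textbf{Tightness.} Since lifetimes converge deterministically, tightness in $(\K, d_\K)$ reduces to tightness in $\C([0,m],\R)$ of the (suitably reparametrised) trajectories, for which I would use the Kolmogorov–Chentsov criterion: it suffices to bound $\E{\big|B_{(n)}(t)-B_{(n)}(t')\big|^q}\le C|t-t'|^{q/2}$ for some $q$ with $q/2 - 1 > 0$, uniformly in $n$. Passing to the conditioned law via Bayes as above introduces the factor $Q^\Sigma_{m_n}(l_n)^{-1}\le C\sqrt n$; bounding the remaining transition probability $Q^\Sigma_{m_n - \lf nt\rf}(\cdot)$ by its uniform $O(n^{-1/2})$ bound from Proposition~\ref{petrov}(2) cancels this $\sqrt n$, reducing the problem to $\E{\big|\Sigma_{\lf nt\rf}-\Sigma_{\lf nt'\rf}\big|^q}\le C\,(n|t-t'|)^{q/2}$ for the unconditioned walk, which is the Rosenthal inequality of Lemma~\ref{rosenthal} (valid up to $q = q_0 \ge 3$, giving $q/2 - 1 \ge 1/2 > 0$; the increments have a moment of order $q_0$ by hypothesis). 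One must handle $t'$ near $m$ with a little care since the conditioning pins $B_{(n)}(m_{(n)}) = l_{(n)}\to l$, but this only helps. This yields $\alpha$-Hölder tightness for every $\alpha < 1/2 - 1/q_0$, hence tightness in $\C([0,m],\R)$ and a fortiori in $(\K,d_\K)$.

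\textbf{Conclusion and main obstacle.} Tightness plus the identification of finite-dimensional limits gives $B_{(n)}\Rightarrow B^{0\to l}_{[0,m]}$ in $(\K,d_\K)$. The principal obstacle is purely technical: carefully tracking the $\sqrt n$ normalisation through Bayes' formula and verifying that the ratio $Q^\Sigma_{m_n-\lf nt\rf}(l_n-\Sigma_{\lf nt\rf})/Q^\Sigma_{m_n}(l_n)$ is both uniformly integrable (for the fdd step, so the limit of a product is the product of limits) and uniformly bounded by something integrable against $\E{|\Sigma_{\lf nt\rf}-\Sigma_{\lf nt'\rf}|^q}$ (for the tightness step) — in both cases Proposition~\ref{petrov}, which provides the matching lower bound $Q^\Sigma_{m_n}(l_n)\ge c\,n^{-1/2}$ (as $l_n/\sqrt n$ is bounded) and the Gaussian-type upper bounds, is exactly what closes the argument.
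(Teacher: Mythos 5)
Your plan rests on exactly the same two technical pillars as the paper's proof: (i)~the local limit theorem (Proposition~\ref{petrov}) applied through the Markov property to identify and dominate the Bayes ratio $Q^\Sigma_{m_n-\lfloor nt\rfloor}(l_n-\Sigma_{\lfloor nt\rfloor})/Q^\Sigma_{m_n}(l_n)$, and (ii)~the Rosenthal inequality to obtain moment estimates $\E{|B_{(n)}(t)-B_{(n)}(s)|^q}\le C|t-s|^{q/2}$ feeding Kolmogorov's criterion. The superstructure is organised a little differently: you prove finite-dimensional convergence on $[0,m)$ plus tightness and invoke Prokhorov, whereas the paper uses a Skorokhod coupling of the \emph{unconditioned} walk to get weak convergence of the full restriction $(B_{(n)}(s))_{s\le m'}$ for each $m'<m$ in one stroke (part 1 of the proof), proves the moment bound separately (Lemma~\ref{lembn}), and then closes the gap between $[0,m']$ and $[0,m]$ by a three-term triangle inequality with bounded uniformly continuous test functions (part 2). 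This is a reorganisation, not a genuinely different route; the effort is concentrated in the same places.

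The one point you gloss over too quickly is the behaviour near the right endpoint. You write that the conditioning pins $B_{(n)}(m_{(n)})=l_{(n)}$ and so the endpoint ``only helps,'' but the uniform bound $Q^\Sigma_k(\cdot)\le Ck^{-1/2}$ from Proposition~\ref{petrov}(2) degenerates as $k=m_n-\lfloor nt\rfloor\to 0$, so the Bayes ratio is \emph{not} uniformly bounded when $t$ approaches $m_{(n)}$ and the naive cancellation of $\sqrt n$ factors fails there. The paper's Lemma~\ref{lembn} handles this concretely: it first proves the moment estimate for $s\le t\le m_{(n)}/2$, and then uses the exact time-reversal identity
$\lp B_{(n)}(s)\rp_{0\le s\le m_{(n)}}\law\lp l_{(n)}-B_{(n)}(m_{(n)}-s)\rp_{0\le s\le m_{(n)}}$
for the discrete bridge to transfer the bound to $m_{(n)}/2\le s\le t$. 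Some such device is genuinely needed; the pinning at $m_{(n)}$ does not by itself produce a usable moment bound for increments ending close to $m_{(n)}$.
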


\begin{pre}
We note $\mathcal{F}_i \de \sigma( \Sigma_k, 0 \le k \le i)$ the natural filtration associated with $\Sigma$.
Applying the Skorokhod theorem, we may and will assume that 
$$\lp \frac {\Sigma_{ns}} {\eta \sqrt {n}} \rp_{0\le s\le m}$$
converges a.s. toward a standard Brownian motion $(\beta_s)_{0\le s\le m}$ for the uniform topology.

\medskip

1) Let $m' < m$. We begin by looking at $B_{(n)}$ on $[0,m']$. For $n$ large enough, $\lc nm' \rc < m_n$. Let $f$ be continuous bounded from $\K$ to $\R$. We have
\begin{align}
\E{f \lp (B_{(n)}(s))_{0 \le s \le m'} \rp} & = \E{f \lp \lp \frac {\Sigma_{ns}} {\eta \sqrt {n}} \rp_{0 \le s \le m'} \rp \,\Big| \ \Sigma_{m_n} = l_n}\notag\\
&= \E{f \lp \lp \frac {\Sigma_{ns}} {\eta \sqrt {n}} \rp_{0 \le s \le m'} \rp \frac
	{\P\lp \Sigma_{m_n} = l_n \, \lm \ \mathcal{F}_{\lc nm' \rc} \rno  \rp}
	{\P\lp \Sigma_{m_n} = l_n \rp}}.\label{bb}
\end{align}
Recall the notation $Q^\Sigma_{k} (i)=\P\lp \Sigma_{k} = i \rp$. Using the Markov property, we obtain
\begin{align}
\P\lp \Sigma_{m_n} = l_n \, \lm \ \mathcal{F}_{\lc nm' \rc} \rno \rp &= Q^\Sigma_{m_n - \lc n m' \rc} (l_n -\Sigma_{\lc nm' \rc}) \notag\\
&\sim \frac h {\eta\sqrt n} \, \ga_{m-m'}(l -\beta_{m'}).\label{eqbb}
\end{align}
where the second line comes from Proposition \ref{petrov}. Note that the denominator of the fractional term in \eqref{bb} is the same as the numerator when $m'$ is chosen to be $0$.
So the fractional term in \eqref{bb} converges a.s. toward
$$\frac{\ga_{m-m'}(l -\beta_{m'})}{\ga_{m}(l)},$$
the convergence being dominated---by Proposition \ref{petrov}. Finally,
\begin{align*}
\E{f \lp (B_{(n)}(s))_{0 \le s \le m'} \rp}
		&\ton \E{f \lp (\beta_s)_{0 \le s \le m'} \rp \frac {\ga_{m - m'}(l - \beta_{m'})}{\ga_m(l)}}\\
		&\hspace{7mm} = \E{f \lp (B_{[0,m]}^{0\to l}(s))_{0 \le s \le m'} \rp}.
\end{align*}

\medskip
2) We will use the following lemmas, the proofs of which we postpone right after the end of this proof.

\begin{lem}\label{lembn}
There exists an integer $n_0 \in \N$ such that, for every $2 \le q \le q_0$, there exists a constant $C_q$ satisfying, for all $n \ge n_0$ and $0 \le s \le t \le m_{(n)}$,
$$\E{\lm B_{(n)}(t) - B_{(n)}(s) \rm^{q}} \le C_q \lm t -s \rm^{\frac q 2}.$$
\end{lem}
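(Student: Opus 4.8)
The plan is to reduce the conditional moment to an \emph{unconditional} one by a Radon--Nikodym change of measure, and then combine Rosenthal's inequality (Lemma~\ref{rosenthal}) with the local limit theorem (Proposition~\ref{petrov}). First I would fix $n_0$ large enough that, for all $n\ge n_0$, one has $\tfrac m2\,n\le m_n\le 2m\,n$, $\lm l_{(n)}-l\rm\le 1$, and --- by Proposition~\ref{petrov}(1), using that $l_n/(\eta\sqrt{m_n})\to l/\sqrt m$ and that the Gaussian density is positive at $l/\sqrt m$ --- $\P(\Sigma_{m_n}=l_n)\ge c\,n^{-1/2}$ for some $c>0$. Next, by the standard argument reducing a Kolmogorov-type increment bound for a linearly interpolated process to its values at the mesh points (the interpolation and the increments involving at most two of the $X_i$'s being controlled by one- and two-step versions of the estimate below), it is enough to prove the inequality when $ns=k$ and $nt=k'$ are integers with $k<k'$; then $B_{(n)}(t)-B_{(n)}(s)=W/(\eta\sqrt n)$, where $W\de\Sigma_{k'}-\Sigma_k$ is a sum of $k'-k=n\lm t-s\rm$ i.i.d.\ centered terms.

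I would then split according to whether $k'-k\le m_n/2$ or not. In the first (main) case, set $R\de\Sigma_k+(\Sigma_{m_n}-\Sigma_{k'})=\Sigma_{m_n}-W$; then $R$ is independent of $W$ and has the law of a sum of $N\de m_n+1-(k'-k)\ge m_n/2\ge mn/4$ i.i.d.\ copies of $X_1$, so $\P(R=j)=Q^\Sigma_{N-1}(j)$. Conditioning on $W$ and using this independence,
\begin{align*}
\E{\lm B_{(n)}(t)-B_{(n)}(s)\rm^q}
 &= \frac{1}{\P(\Sigma_{m_n}=l_n)}\;\E{\lm \tfrac{W}{\eta\sqrt n}\rm^{q}\,\1{R=l_n-W}}\\
 &= \frac{1}{\P(\Sigma_{m_n}=l_n)}\;\E{\lm \tfrac{W}{\eta\sqrt n}\rm^{q}\,Q^\Sigma_{N-1}(l_n-W)}.
\end{align*}
By Proposition~\ref{petrov}(2) with $r=2$, $Q^\Sigma_{N-1}(j)\le C\,N^{-1/2}\le C'\,n^{-1/2}$ uniformly in $j$, while $\P(\Sigma_{m_n}=l_n)\ge c\,n^{-1/2}$; the two powers of $n$ cancel and we are left with $\E{\lm B_{(n)}(t)-B_{(n)}(s)\rm^q}\le C\,\E{\lm W/(\eta\sqrt n)\rm^q}$. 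Finally, Lemma~\ref{rosenthal} applied to the sum $W$ of $k'-k$ i.i.d.\ centered variables gives $\E{\lm W\rm^q}\le c(q)\,(k'-k)^{q/2}\,\E{\lm X_1\rm^q}$, and since $k'-k=n\lm t-s\rm$ this yields $\E{\lm B_{(n)}(t)-B_{(n)}(s)\rm^q}\le C_q\,\lm t-s\rm^{q/2}$.

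For the complementary case $k'-k>m_n/2$, note that then $\lm t-s\rm\ge m_n/(2n)\ge m/4$ for $n\ge n_0$, so $\lm t-s\rm^{q/2}$ is bounded below by a positive constant and it suffices to bound $\E{\lm B_{(n)}(t)-B_{(n)}(s)\rm^q}$ by a constant. Putting $a\de s$ and $b\de m_{(n)}-t$, one has $a+b=m_{(n)}-\lm t-s\rm<m_{(n)}/2$, hence $a,b<m_{(n)}/2$; and since $B_{(n)}(0)=0$ and $B_{(n)}(m_{(n)})=l_{(n)}$,
$$B_{(n)}(t)-B_{(n)}(s)=l_{(n)}-\big(B_{(n)}(a)-B_{(n)}(0)\big)-\big(B_{(n)}(m_{(n)})-B_{(n)}(m_{(n)}-b)\big).$$
The two bracketed increments span intervals of length $<m_{(n)}/2$, hence fall under the main case and have $q$-th moment at most $C_q(m_{(n)}/2)^{q/2}\le C_q\,m^{q/2}$, while $\lm l_{(n)}\rm\le\lm l\rm+1$; Minkowski's inequality then gives the required constant bound, and we take $C_q$ to be the larger of the two constants produced.

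The delicate point is the main case: one must pull a factor of order $n^{-1/2}$ out of the ``outer'' part $R$ of the bridge in order to cancel the normalisation $\P(\Sigma_{m_n}=l_n)^{-1}$, and this requires $N\asymp n$, i.e.\ that the central block $\ent k{k'}$ be not too long --- which is precisely why the dichotomy at $m_n/2$ is needed, the long-block case being then reduced back to the short-block one. The interpolation issue and the $O(1/n)$-scale increments are minor and are absorbed by applying the same change-of-measure estimate to one or two of the $X_i$'s.
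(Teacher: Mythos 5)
Your proof is correct, and its core is the same as the paper's: remove the conditioning by writing the conditional moment as an unconditional one weighted by a ratio of point probabilities, bound the numerator uniformly by $Cn^{-1/2}$ via the local limit theorem (Proposition~\ref{petrov}), bound the denominator below by $cn^{-1/2}$, and finish with Rosenthal's inequality. Where you differ is in the case analysis. The paper splits by \emph{position} (the increment lies in $[0,m_{(n)}/2]$ or in $[m_{(n)}/2,m_{(n)}]$), conditions on the full past $\mathcal F_{\lc nt\rc}$ so that the density ratio is $Q^\Sigma_{m_n-nt}(l_n-\Sigma_{nt})/Q^\Sigma_{m_n}(l_n)$, and disposes of the second half by the exact time-reversal identity \eqref{btpn}. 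You split by \emph{length}, condition on the central increment $W$ itself (exploiting the independence of $W$ from the two outer blocks $R$), and handle long increments by decomposing them into the total displacement $l_{(n)}$ minus two short end increments. Both routes are valid; yours avoids invoking time reversal here (which the paper still needs for Lemma~\ref{lemb}), at the cost of the extra Minkowski step for long blocks, and your conditioning on $W$ rather than on $\mathcal F_{nt}$ is what lets a single estimate cover all short increments regardless of where they sit in $[0,m_{(n)}]$. The only soft spot is the interpolation reduction, which you wave at rather than carry out; the paper's one-line deterministic bound $|B_{(n)}(t)-B_{(n)}(s)|\le n(t-s)/(\eta\sqrt n)\le\sqrt{t-s}/\eta$ for $|t-s|<1/n$ is the cleaner way to dispatch it, but this is minor.
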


\begin{lem}\label{lemb}
We note $B\de B_{[0,m]}^{0\to l}$. For any $q \ge 2$, there exists a constant $C_q$ such that, for all $0 \le s \le t \le m$,
$$\E{\lm B(t) - B(s) \rm^{q}} \le C_q \lm t -s \rm^{\frac q 2}.$$
\end{lem}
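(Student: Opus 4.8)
The plan is to reduce Lemma~\ref{lemb} to the standard moment estimates for Brownian motion, by first putting $B \de B_{[0,m]}^{0\to l}$ into its classical explicit form. Concretely, I would first check that the process defined through \eqref{b} has the same law as
$$\lp \beta_t - \frac t m \, \beta_m + \frac t m \, l \rp_{0\le t\le m},$$
where $(\beta_t)_{0\le t\le m}$ is a standard Brownian motion. The point is that the density $\ga_{m-m'}(l-\beta_{m'})/\ga_m(l)$ appearing in \eqref{b} is, up to a positive multiplicative constant, the exponential of a quadratic polynomial in $\beta_{m'}$; hence reweighting Wiener measure on $[0,m']$ by it keeps the law of $(\beta_t)_{0\le t\le m'}$ Gaussian, and a completion of the square shows that under the new law $\beta_{m'}\sim \mathcal{N}\big(m'l/m,\, m'(m-m')/m\big)$ while the conditional law of $(\beta_t)_{0\le t\le m'}$ given $\beta_{m'}$ stays unchanged. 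This matches the law of the explicit process above on each interval $[0,m']$, $m'<m$; letting $m'\uparrow m$ and using $B(m)=l$ then identifies the two laws on all of $[0,m]$. (Alternatively, one may simply invoke this representation as probabilistic folklore, e.g.\ \cite{revuz99cma}.)

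Granting the representation, for $0\le s\le t\le m$ one has
$$B(t)-B(s) = (\beta_t-\beta_s) - \frac{t-s}{m}\,\beta_m + \frac{t-s}{m}\,l,$$
so, using the elementary convexity bound $|a+b+c|^q\le 3^{q-1}\lp|a|^q+|b|^q+|c|^q\rp$ and the identity $\E{\lm \mathcal{N}(0,a)\rm^q}=c_q\,a^{q/2}$, with $c_q \de \E{\lm\mathcal{N}(0,1)\rm^q}$,
$$\E{\lm B(t)-B(s)\rm^q} \le 3^{q-1}\lp c_q\,(t-s)^{q/2} + \frac{(t-s)^q}{m^q}\,c_q\,m^{q/2} + \frac{(t-s)^q}{m^q}\,|l|^q\rp.$$
Since $0\le t-s\le m$ forces $(t-s)^q\le (t-s)^{q/2}\,m^{q/2}$, each of the last two summands is at most a constant times $(t-s)^{q/2}$, and one concludes with
$$C_q = 3^{q-1}\lp 2\,c_q + \frac{|l|^q}{m^{q/2}}\rp,$$
which depends only on $q$, since $m$ and $l$ are fixed throughout this section.

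I do not expect a genuine obstacle here: the only step requiring more than a one-line justification is the identification of the law of $B_{[0,m]}^{0\to l}$ with the explicit bridge, and even that is routine once one notices the exponential-of-a-quadratic structure of the density in \eqref{b}. Note, moreover, that only the scaling exponent $q/2$ of $|t-s|$ matters downstream: Lemma~\ref{lemb}, together with Lemma~\ref{lembn}, is meant to feed a Kolmogorov-type tightness argument in the proof of Lemma~\ref{bn}, so the precise constant $C_q$ is irrelevant. An analogous bound for the first-passage Brownian bridge $F_{[0,m]}^{0\to-\sigma}$ would follow from the same scheme, replacing the bridge representation by a corresponding explicit description built from \eqref{f}.
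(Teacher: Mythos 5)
Your proof is correct, but it follows a genuinely different route than the paper. The paper deduces Lemma~\ref{lemb} from its discrete analogue (Lemma~\ref{lembn}): for fixed $s\le t<m$, point 1) of the proof of Lemma~\ref{bn} gives $(B_{(n)}(s),B_{(n)}(t))\tol (B(s),B(t))$, so one passes to the limit in the bound $\E{|B_{(n)}(t)-B_{(n)}(s)|^q\wedge M}\le C_q|t-s|^{q/2}$ and lets $M\to\infty$; the endpoint $t=m$ requires a small extra argument via the time-reversal \eqref{btpn}, and arbitrary $q\ge2$ is recovered by observing that the approximating walk $\Sigma$ may be chosen to have moments of any order. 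Your argument instead uses the classical explicit representation $B(t)=\beta_t-\frac tm\beta_m+\frac tm l$ (which one does need to check matches the density definition \eqref{b}, but this is an exercise in completing the square and is standard), then computes the $q$-th moment of the increment directly from the Gaussian moment formula together with $(t-s)^q\le (t-s)^{q/2}m^{q/2}$. The computation is correct and yields the explicit constant $C_q = 3^{q-1}(2c_q + |l|^q m^{-q/2})$, which as you note only involves the fixed parameters $m$, $l$. What the paper's route buys is that it never requires identifying the law of $B$ explicitly, and the same ``take limits from the discrete side'' philosophy is reused for the first-passage bridge (via Proposition~\ref{bcp}), where an explicit representation is less pleasant; what your route buys is directness, independence from Lemma~\ref{lembn}, and validity for all $q\ge2$ without the rechoosing-the-walk trick. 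Both are sound.
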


By the Portmanteau theorem \cite[Theorem 2.1]{billingsley68cpm}, we can restrict ourselves to bounded uniformly continuous functions from $\K$ to $\R$. Let $f$ be such a function. Let $\eps >0$, and $\delta >0$ be such that $d_\K(X,Y) < \delta$ implies $|f(X)-f(Y)| < \eps$.

Let $0 < \alpha < 1/2-1/q_0$. Thanks to Lemmas \ref{lembn} and \ref{lemb}, Kolmogorov's criterion \cite[Theorem~3.3.16]{stroock99pta} provides us with some constant $C$ such that
$$\sup_n \P\lp B_{(n)} \notin K\rp \vee \P\lp B \notin K\rp < \frac \eps{\|f\|_\infty},$$
where
$$K \de \lb X \in \K\,:\, \sup_{s\neq t} \frac {\lm X(t) - X(s)\rm}{\lm t-s\rm^\alpha}\le C \rb.$$

%
%

We take $m'$ satisfying
$$|m-m'| + C\,|m-m'|^\alpha < \frac \delta 2,$$
so that, for $n$ sufficiently large,
$$|m_{(n)}-m'| + C\,|m_{(n)}-m'|^\alpha < \delta.$$

For any function $X = (X(s))_{0\le s \le x} \in \K$, we define $X_{|y} \de (X(s))_{0\le s \le y}\in \K$. Hence
\begin{align}
\E{\lm f\lp B_{(n)}\rp - f \lp B \rp \rm} \le \E{\lm f\lp B_{(n)}\rp - f\lp {B_{(n)}}_{|m'} \rp \rm} + \E{\lm f\lp {B_{(n)}}_{|m'} \rp - f\lp B_{|m'} \rp \rm}\notag\\
&\hspace{-3.2cm} + \E{\lm f\lp B_{|m'} \rp - f \lp B \rp \rm}\!.\label{cvb}
\end{align}
Thanks to point 1), for $n$ large enough, the second term of the right-hand side of \eqref{cvb} is less than~$\eps$. The first and third terms are treated in the same way (for the third term, just remove the $(n)$'s): on the set $\lb B_{(n)} \in K \rb$,
\begin{align*}
d_{\K} \lp B_{(n)},{B_{(n)}}_{|m'} \rp &= |m_{(n)}-m'| + \sup_{m'\le t \le m_{(n)}} \lm B_{(n)}(t)-B_{(n)}(m') \rm\\
				       &\le |m_{(n)}-m'| + C\,|m_{(n)}-m'|^\alpha\\
				       &< \delta,
\end{align*}
and
\begin{align*}
\E{\lm f\lp B_{(n)}\rp - f\lp {B_{(n)}}_{|m'} \rp \rm} &\le \E{\lm f\lp B_{(n)}\rp - f\lp {B_{(n)}}_{|m'} \rp \rm \, \1{\lb B_{(n)} \in K \rb}} +2 \|f\|_\infty\, \P\lp B_{(n)} \notin K\rp\\
							&< 3\eps.
\end{align*}

All in all, for $n$ large enough
$$\E{\lm f\lp B_{(n)}\rp - f \lp B \rp \rm} \le 7\eps,$$
and $B_{(n)}$ converges weakly toward $B$.
\end{pre}

It remains to prove Lemmas \ref{lembn} and \ref{lemb}.

\begin{preuv}{of Lemma \ref{lembn}}
If $|t-s| < 1/n$, the fact that $B_n$ is linear on every interval $[i,i+1]$ implies that
$$|B_{(n)}(t) - B_{(n)}(s)|\le \frac {n(t-s)}{\eta\sqrt n} \le \frac 1\eta \sqrt{t-s},$$
which gives the desired result. By the triangular inequality, we can restrict ourselves to the cases where $ns$ and $nt$ are integers, and either $t\le m_{(n)}/2$ or $m_{(n)}/2 \le s$.

First, let us suppose that $0\le s\le t\le m_{(n)}/2$. Applying \eqref{bb} with $m'=t$ and the proper function $f$, we obtain
\begin{equation}\label{bnq}
\E{\lm B_{(n)}(t) - B_{(n)}(s) \rm^{q}} = \eta^{-q} n^{-\frac q 2} \, \E{\lm \Sigma_{nt} - \Sigma_{ns} \rm^{q} \frac 
			{Q^\Sigma_{m_n-nt} (l_n - \Sigma_{nt})}{Q^\Sigma_{m_n}(l_n)}}.
\end{equation}

The asymptotic formula \eqref{eqbb} and the fact that $m_{(n)} \to m$ yield the existence of a positive constant $c$ and an integer $n_0$ such that for $n \ge n_0$,
$$\sqrt n \, Q^\Sigma_{m_n}(l_n) \ge c \sand m_{(n)} > \frac m 2.$$
Then Proposition \ref{petrov} ensures us that for $n \ge n_0$,
\begin{align*}
\sqrt n\, Q^\Sigma_{m_n-nt} (l_n - \Sigma_{nt})
	&\le \sqrt n \, \sup_{x \in \R}\,\sup_{y > \frac m 4} Q^\Sigma_{ny}\lp x\sqrt n\rp\\
	&\le \frac {2h}{\eta\sqrt m}\, \sup_{x \in \R}\,\sup_{y > 0}\,\sup_{n\in \N} \lp \frac{\eta}{h} \sqrt{ny}\, Q^\Sigma_{ny}\lp x\sqrt n\rp\rp
<\infty.
\end{align*}
Thus, the fractional term in the equation \eqref{bnq} is uniformly bounded as soon as $n \ge n_0$, and 
\begin{align*}
\E{\lm B_{(n)}(t) - B_{(n)}(s) \rm^{q}} &\le C \, n^{-\frac q 2}\, \E{\lm \Sigma_{nt} - \Sigma_{ns} \rm^{q}}\\
				      &\le C \, n^{-\frac q 2}\, \E{\lm \Sigma_{n(t-s)} \rm^{q}}\\
				      &\le C_q \lm t - s\rm^{\frac q 2}
\end{align*}
by means of the Rosenthal Inequality (Lemma \ref{rosenthal}).

\medskip
Now, if $m_{(n)}/2\le s\le t \le m_{(n)}$, we use the following time reversal invariance:
\begin{equation}\label{btpn}
\lp B_{(n)}(s)\rp_{0\le s\le m_{(n)}} \law \lp l_{(n)} - B_{(n)}(m_{(n)} - s)\rp_{0\le s\le m_{(n)}}.
\end{equation}
We have
$$\E{\lm B_{(n)}(t) - B_{(n)}(s) \rm^{q}} = \E{\lm B_{(n)}(m_{(n)} - s) - B_{(n)}(m_{(n)} -t) \rm^{q}}$$
and we are back in the case we just treated. Note that it is important that $m_{(n)}$ be a deterministic time.
\end{preuv}

\begin{preuv}{of Lemma \ref{lemb}}
We show the inequality for $2 \le q \le q_0$. As $B$ appears as the limit of $B_{(n)}$ (in a certain sense), we may choose the $X_k$'s to have arbitrarily large moments, and we see that it actually holds for any value of $q \ge 2$.
For $0\le s\le t < m$, point 1) in the proof of Lemma~\ref{bn} shows that
$$\lp B_{(n)}(s),B_{(n)}(t) \rp \tol (B(s), B(t)),$$
and 
\begin{align*}
\E{\lm B(t) - B(s) \rm^{q}} & = \lim_{M \to \infty} \E{\lm B(t) - B(s) \rm^{q}\wedge M}\\
			    & = \lim_{M \to \infty} \lim_{n\to \infty} \E{\lm B_{(n)}(t) - B_{(n)}(s) \rm^{q}\wedge M}\\
			    &\le C_q \lm t -s \rm^{\frac q 2},
\end{align*}
where $C_q$ is the constant of Lemma \ref{lembn}. It only remains to see that $B_{(n)}(m\wedge m_{(n)}) \to B(m)$ in probability in order to obtain the same inequality for $t=m$. The time reversal invariance \eqref{btpn} implies that
$$B_{(n)}(m\wedge m_{(n)}) \law l_{(n)} - B_{(n)}\lp (m_{(n)}-m)\vee 0 \rp,$$
and, thanks to 1),
\begin{align*}
\lm B_{(n)}\lp (m_{(n)}-m)\vee 0 \rp \rm &\le \lm B_{(n)}\lp (m_{(n)}-m)\vee 0 \rp - B\lp (m_{(n)}-m)\vee 0 \rp \rm + \lm B\lp (m_{(n)}-m)\vee 0 \rp \rm\\
			  &\to 0
\end{align*}
in probability, so that $B_{(n)}(m\wedge m_{(n)}) \to l = B(m)$ in probability.
\end{preuv}

\subsubsection*{Discrete first-passage bridges}

We now see a lemma similar to Lemma \ref{bn} for first-passage bridges, in which we will only consider simple random walks.
Let $(m_n)\in \Zp^\N$ and $(\sigma_n) \in \N^\N$ be two sequences of integers such that 
$$m_{(n)} \de \frac{m_n}{n} \ton m \sand \sigma_{(n)} \de \frac{\sigma_n}{\sqrt{n}} \ton \sigma.$$

We consider a sequence $(X_k)_{k \ge 1}$ of i.i.d. random variables with law $(\delta_{-1} + \delta_1)/2$ and define $S_i \de \sum_{k=1}^i X_k$ (and, by convention, $S_0 =0$). We still write $S$ its linearly interpolated version. We call $(B_n(i))_{0\le i \le m_n}$ and $(F_n(i))_{0\le i \le m_n}$ the two processes whose laws are the law of $(S_i)_{0\le i \le m_n}$ conditioned respectively on the events
$$\{ S_{m_n}= -\sigma_n \} \sand \{ S_{m_n}= - \sigma_n,\, \underline S_{m_n-1} > - \sigma_n \},$$
which we suppose occur with positive probability.
Finally, we define
$$B_{(n)} \de \lp \frac {B_n(ns)}{\sqrt{n}} \rp_{0\le s \le m_{(n)}} \sand
	F_{(n)} \de \lp \frac {F_n(ns)}{\sqrt{n}} \rp_{0\le s \le m_{(n)}}$$
their rescaled versions.

\bigskip

There is actually a very convenient way to construct $F_n$ from $B_n$. For $0 \le k \le m_n$, the shifted path of $B_n$ is defined by
$$\varTheta_k(B_n)(x)=\lb \begin{array}{lcl}
                           B_n(k+x) - B_n(k)			& \text{ if} &\!\! 0\le x \le m_n-k,\\
                           B_n(k+x-m_n) + B_n(m_n) - B_n(k)	& \text{ if} &\!\! m_n-k\le x \le m_n.\\
                          \end{array}\rno$$
For $0\le k \le \sigma_n -1$, the first time at which $B_n$ reaches its minimum plus $k$ is noted
$$r_k(B_n) \de \inf \lb i\,:\, B_n(i) = \inf_{0\le j \le m_n}\!\! B_n(j) +k \rb.$$

The following proposition \cite[Theorem 1]{bertoin03ptf} gives a construction of $F_n$ from $B_n$.

\begin{prop}[Bertoin - Chaumont - Pitman]\label{bcp}
Let $\nu_n$ be a random variable independent of $S$ and uniformly distributed on $\{0,1,\dots,\sigma_n-1\}$. Then, the process $\varTheta_{r_{\nu_n}(B_n)}(B_n)$ has the same law as $F_n$.
\end{prop}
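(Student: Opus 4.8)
The plan is to reduce the statement to a purely combinatorial fact about cyclic shifts of $\pm1$ lattice paths, proved by comparing cardinalities. Write $\mathrm{Br}_n$ for the set of all $\pm1$ paths $\omega=(\omega(i))_{0\le i\le m_n}$ with $\omega(0)=0$ and $\omega(m_n)=-\sigma_n$, and $\mathrm{FP}_n\subset\mathrm{Br}_n$ for the subset of those that moreover satisfy $\omega(i)>-\sigma_n$ for all $0\le i\le m_n-1$; both are nonempty by assumption. Since the increments of $(S_i)$ are i.i.d.\ uniform on $\{\pm1\}$, conditioning on $\{S_{m_n}=-\sigma_n\}$, resp.\ on $\{S_{m_n}=-\sigma_n,\ \underline S_{m_n-1}>-\sigma_n\}$, makes every compatible path equally likely, so the first observation is that $B_n$ is uniform on $\mathrm{Br}_n$ and $F_n$ is uniform on $\mathrm{FP}_n$. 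It then suffices to show that the map $\Phi\colon\mathrm{Br}_n\times\ent 0{\sigma_n-1}\to\mathrm{FP}_n$, $\Phi(\omega,k)\de\varTheta_{r_k(\omega)}(\omega)$, is well defined and exactly $m_n$-to-one onto $\mathrm{FP}_n$: indeed $(B_n,\nu_n)$ is uniform on $\mathrm{Br}_n\times\ent0{\sigma_n-1}$ by independence, so then $\Phi(B_n,\nu_n)=\varTheta_{r_{\nu_n}(B_n)}(B_n)$ is uniform on $\mathrm{FP}_n$, which is the law of $F_n$.

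The first ingredient is the remark that $\varTheta_k$ is just the cyclic shift by $k$ of the increment sequence of a bridge---the two branches of its definition glue at $x=m_n-k$, and the increment sequence of $\varTheta_k(\omega)$ is $\bigl(\omega(k+1)-\omega(k),\dots,\omega(m_n)-\omega(m_n-1),\,\omega(1)-\omega(0),\dots,\omega(k)-\omega(k-1)\bigr)$---so that $\varTheta$ defines an action of $\Z/m_n\Z$ on $\mathrm{Br}_n$ with $\varTheta_0=\varTheta_{m_n}=\mathrm{id}$. Next I would check that $\Phi$ lands in $\mathrm{FP}_n$. Put $\mu\de\min_{0\le j\le m_n}\omega(j)\le-\sigma_n$ and $r\de r_k(\omega)$, so $\omega(r)=\mu+k\le-1$ and $\omega(j)>\mu+k$ for $j<r$. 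Then $\Phi(\omega,k)(m_n)=\omega(m_n)=-\sigma_n$; for $0\le x\le m_n-r$ one has $\Phi(\omega,k)(x)=\omega(r+x)-(\mu+k)\ge\mu-(\mu+k)=-k>-\sigma_n$; and for $m_n-r\le x<m_n$, writing $y=x-(m_n-r)\in\ent0{r-1}$, one has $\Phi(\omega,k)(x)=\omega(y)-\sigma_n-(\mu+k)$, which is $\ge1-\sigma_n>-\sigma_n$ when $y\ge1$ (because then $\omega(y)\ge\mu+k+1$) and equals $-\sigma_n-(\mu+k)\ge-k>-\sigma_n$ when $y=0$. Hence $\Phi(\omega,k)\in\mathrm{FP}_n$.

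The core step is the $m_n$-to-one property, which I would get from a cardinality count. Consider $\Psi\colon\mathrm{Br}_n\times\ent0{\sigma_n-1}\to\mathrm{FP}_n\times\ent0{m_n-1}$, $\Psi(\omega,k)\de\bigl(\Phi(\omega,k),\,r_k(\omega)\bmod m_n\bigr)$, which is well defined by the previous step. It is injective: from $(f,j)=\Psi(\omega,k)$ one recovers $\omega=\varTheta_{-r_k(\omega)}(f)=\varTheta_{(-j)\bmod m_n}(f)$ via the group action, then $r_k(\omega)$ as the representative of $j$ in $\ent1{m_n}$, then $\mu=\min\omega$, and finally $k=\omega(r_k(\omega))-\mu$. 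Since $\lvert\mathrm{Br}_n\times\ent0{\sigma_n-1}\rvert=\sigma_n\lvert\mathrm{Br}_n\rvert=m_n\lvert\mathrm{FP}_n\rvert=\lvert\mathrm{FP}_n\times\ent0{m_n-1}\rvert$, where the middle equality is the cycle lemma used in the proof of Lemma~\ref{forests} (with $2m+\sigma=m_n$ and $\sigma=\sigma_n$), the injection $\Psi$ is a bijection, so every $f\in\mathrm{FP}_n$ has exactly $m_n$ preimages under $\Phi$, and we conclude by the first paragraph.

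The step that needs genuine care is this last one: one must keep the $\Z/m_n\Z$-bookkeeping tight---in particular that $\Phi(\omega,k)=f$ forces $\omega$ to be the indicated cyclic shift of $f$---and one must notice that the cardinality identity $\sigma_n\lvert\mathrm{Br}_n\rvert=m_n\lvert\mathrm{FP}_n\rvert$ is essential, being exactly what turns the easy injectivity of $\Psi$ into the surjectivity required for the uniform pushforward. Alternatively one can avoid quoting the cycle lemma and prove surjectivity of $\Psi$ by hand: given $(f,j)$, take $\omega=\varTheta_t(f)$ with $t\equiv-j\pmod{m_n}$, $t\in\ent0{m_n-1}$, and $k=-\min_{0\le i\le t}f(i)$, which lies in $\ent0{\sigma_n-1}$ precisely because $f\in\mathrm{FP}_n$; checking $\Psi(\omega,k)=(f,j)$ then also shows, via the aperiodicity of $\pm1$ first-passage bridges, that the $m_n$ preimages of $f$ are distinct.
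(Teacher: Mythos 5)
The paper gives no proof of this proposition---it is stated as a citation of \cite[Theorem~1]{bertoin03ptf}---so your proposal fills a gap rather than duplicating an argument, and it does so correctly. Since the increments of $S$ are i.i.d.\ uniform on $\{\pm1\}$, $B_n$ is uniform over $\pm1$ bridges from $0$ to $-\sigma_n$ of length $m_n$ and $F_n$ is uniform over the first-passage bridges among them, as you say. The identification of $\varTheta_k$ as the cyclic shift by $k$ of the increment sequence, hence a $\Z/m_n\Z$-action with $\varTheta_0=\varTheta_{m_n}=\mathrm{id}$, is right, and your case check that $\varTheta_{r_k(\omega)}(\omega)$ is a first-passage bridge is correct (including the edge case $y=0$, which uses $\min\omega\le-\sigma_n$). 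The counting step is sound: the injectivity of $\Psi$ is elementary, and the cardinality identity $\sigma_n\cdot|\{\text{bridges}\}|=m_n\cdot|\{\text{first-passage bridges}\}|$ is precisely the cycle lemma \cite[Lemma~2]{bertoin03ptf} that the paper already invokes in the proof of Lemma~\ref{forests}, which upgrades your injection to a bijection and hence shows $\Phi$ is exactly $m_n$-to-one, so the uniform law on pairs pushes forward to the uniform law on first-passage bridges. The one genuinely delicate point, which you handle carefully, is that $r_k(\omega)$ takes values in $\ent 1{m_n}$---with $r_0(\omega)=m_n$ precisely when $\omega$ is already a first-passage bridge---so the reduction modulo $m_n$ inside $\Psi$ is necessary, and the inverse must take the representative of $j$ in $\ent 1{m_n}$ rather than in $\ent 0{m_n-1}$. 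In substance this is the original Bertoin--Chaumont--Pitman combinatorial argument, repackaged as a $\Z/m_n\Z$-equivariant bijection whose two sides have the same cardinality by the cycle lemma.
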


Using this construction, we may show that the first-passage Brownian bridge is the limit of its discrete analog:

\begin{lem}\label{fn}
As $n$ goes to infinity, the process $F_{(n)}$ converges in law toward the process $F^{0 \to -\sigma}_{[0,m]}$, in the space $\lp \K, \, d_\K \rp$.
\end{lem}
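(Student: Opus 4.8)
The plan is to transfer the convergence from the (already-established) discrete bridge $B_{(n)} \to B^{0\to -\sigma}_{[0,m]}$ of Lemma \ref{bn} through the Bertoin--Chaumont--Pitman construction of Proposition \ref{bcp}, and to identify the limit by the analogous continuous construction. First I would recall, or quickly establish, the continuous counterpart of Proposition \ref{bcp}: if $B\de B^{0\to -\sigma}_{[0,m]}$ is the Brownian bridge from $0$ to $-\sigma$ on $[0,m]$, $U$ is uniform on $[0,\sigma]$ independent of $B$, $R_U\de\inf\{t: B(t)=\underline B_m + U\}$, and $\varTheta_t$ denotes the cyclic shift $\varTheta_t(B)(x)=B(t+x)-B(t)$ for $x\le m-t$ and $B(t+x-m)+B(m)-B(t)$ for $m-t\le x\le m$, then $\varTheta_{R_U}(B)$ has the law of $F^{0\to-\sigma}_{[0,m]}$. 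This is the continuous version of the Vervaat-type identity used in \cite{bertoin03ptf}; since the Brownian bridge a.s. attains its minimum at a unique time and each level in $[\underline B_m,0)$ is hit for the first time at a well-defined time, the maps $B\mapsto \underline B_m$, $(B,u)\mapsto R_u$, $(B,t)\mapsto \varTheta_t(B)$ are a.s. continuous at $B$, so the claim follows from \eqref{f} by the same conditioning argument as in Proposition \ref{bcp}'s discrete proof (which I would cite rather than redo).

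Next I would set up the joint convergence. By Lemma \ref{bn} applied to the simple random walk (the case $\eta=1$, $q_0$ arbitrarily large), $B_{(n)}\tol B$ in $(\K,d_\K)$. Introduce $\nu_n$ uniform on $\{0,\dots,\sigma_n-1\}$ independent of $S$, so that $\nu_n/\sqrt n \tol U'$ with $U'$ uniform on $[0,\sigma]$, jointly with and independent of $B_{(n)}\tol B$; by Skorokhod representation we may assume $B_{(n)}\to B$ in $d_\K$ and $\nu_n/\sqrt n\to U$ a.s., with $U$ uniform on $[0,\sigma]$ and independent of $B$. It then suffices to prove the deterministic statement: if $f_n\to f$ in $(\K,d_\K)$ with $f$ a Brownian-bridge-type path (having a unique minimum and hitting each level below $0$ cleanly), and $u_n\to u\in[0,\sigma]$, then the rescaled version of $\varTheta_{r_{\nu_n}(B_n)}(B_n)$ converges in $d_\K$ to $\varTheta_{R_u}(f)$. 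Concretely one checks: $\underline{f_n}_{\,m_{(n)}} \to \underline f_m$; the first-passage time $r_{\nu_n}(B_n)/n$ converges to $R_u(f)$, using that $f$ crosses level $\underline f_m+u$ strictly (so small perturbations of the path and of the level move the first-passage time continuously, a standard fact for Brownian paths); and the cyclic shift map $(g,t)\mapsto \varTheta_t(g)$ is jointly continuous on $\K\times[0,m]$ in the $d_\K$ metric. Assembling these gives the a.s. convergence of the shifted rescaled walk to $\varTheta_{R_U}(B)$, and by Proposition \ref{bcp} the left side is a version of $F_{(n)}$, while by the continuous construction the limit is a version of $F^{0\to-\sigma}_{[0,m]}$, whence $F_{(n)}\tol F^{0\to-\sigma}_{[0,m]}$.

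Finally, I would discharge the two technical points flagged in the text. One is the a.s.\ continuity, as a function of the path, of the first-passage time to a given level: this can fail at paths that touch the level without crossing, but the Brownian bridge $B$ a.s.\ does not have such a tangency at the level $\underline B_m + U$ for $U$ uniform (for a.e.\ level the local behaviour at the first hitting time is that of a non-degenerate crossing), and one makes this rigorous by the usual argument that $\inf\{t: B(t)< \underline B_m + U - \delta\}$ and $\inf\{t: B(t)\le \underline B_m+U\}$ agree in the limit $\delta\downarrow0$ a.s. The other, promised just before the statement, is the continuity of the trajectories of $F^{0\to-\sigma}_{[0,m]}$ up to and including time $m$: away from $m$ its law is locally absolutely continuous with respect to Wiener measure by \eqref{f}, so Kolmogorov's criterion gives $\alpha$-Hölder continuity on $[0,m']$ for every $\alpha<1/2$, uniformly in $m'<m$ with the relevant moment bound coming from \eqref{f} combined with Lemma \ref{lemb}-type estimates; the behaviour at $m$ is then controlled exactly as in the $B$ case (Lemma \ref{lemb}) via the time-reversal identity $F^{0\to-\sigma}_{[0,m]}(m-\cdot)+\sigma$ being again a nice bridge, so $F_{(n)}(m\wedge m_{(n)})\to -\sigma$ in probability and the Hölder bound extends to all of $[0,m]$. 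The main obstacle is precisely this first-passage-time continuity step: making sure the level-$U$ first-passage functional is a.s.\ continuous at $B$ (and that the discrete $r_{\nu_n}$ converge to it) is where the real care is needed; everything else is bookkeeping with $d_\K$ and the cyclic shift.
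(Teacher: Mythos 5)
Your route is genuinely different from the paper's. The paper proves Lemma~\ref{fn} by the same two-step strategy as Lemma~\ref{bn}: (i) it computes directly, for each $m'<m$, the law of $F_{(n)}$ restricted to $[0,m']$ by conditioning, the Markov property, the cycle lemma and Proposition~\ref{petrov} (formula \eqref{fpbb}), obtaining convergence of these restricted processes to the restriction of $F^{0\to-\sigma}_{[0,m]}$ as specified by~\eqref{f}; (ii) it uses Proposition~\ref{bcp} \emph{only} to derive the pathwise bound $\ln F_{(n)}\rn_\alpha\le 2\ln B_{(n)}\rn_\alpha$, from which tightness and the endpoint control at $m$ follow. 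In contrast, you push the entire convergence through the Bertoin--Chaumont--Pitman construction: $B_{(n)}\to B$ by Lemma~\ref{bn}, Skorokhod, a.s.\ continuity of the (minimum, first-passage-time, cyclic-shift) composite at a Brownian-bridge path, and then a continuous Vervaat-type identity to recognize $\varTheta_{R_U}(B)$ as having the law~\eqref{f}. Both work, but each buys something: the paper's argument never needs the continuous Vervaat theorem nor any a.s.\ continuity of the first-passage functional, at the price of redoing the conditional-density computation; yours reuses Lemma~\ref{bn} and the discrete-to-continuum transfer in one stroke, at the price of having to (a) import the continuous BCP/Vervaat identity and verify it against the definition~\eqref{f}, and (b) rule out tangencies at the random level $\underline B_m+U$. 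Your treatment of (b) is correct in spirit (a.s.\ the set of levels with tangential first passage is Lebesgue-null, so $U$ independent-uniform avoids it), and you rightly flag it as the delicate step.

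One small inaccuracy worth fixing: the time-reversal you invoke at the very end does not give ``again a nice bridge.'' The reversed process $t\mapsto F^{0\to-\sigma}_{[0,m]}(m-t)+\sigma$ is a Brownian bridge from $0$ to $\sigma$ \emph{conditioned to stay positive} (a Bessel-type bridge), not a Brownian bridge, so Lemma~\ref{lemb} does not apply to it directly. This does not matter for the endpoint control, because no time-reversal is needed: by construction $F_{(n)}(m_{(n)})=-\sigma_{(n)}\to-\sigma$ exactly, and the Hölder bound $\ln F_{(n)}\rn_\alpha\le 2\ln B_{(n)}\rn_\alpha$ (already supplied by Proposition~\ref{bcp}, just as the paper does) gives $|F_{(n)}(m\wedge m_{(n)})+\sigma|\le|\sigma_{(n)}-\sigma|+C|m_{(n)}-m|^\alpha$, which is what both you and the paper ultimately use.
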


\begin{pre}
We begin as in the proof of Lemma \ref{bn}. We note $\mathcal{F}_i \de \sigma( S_k, 0 \le k \le i)$ the natural filtration associated with $S$, and by the Skorokhod theorem, we may and will assume that 
$$\lp \frac {S_{ns}} {\sqrt {n}} \rp_{0\le s\le m}$$
converges a.s. toward a standard Brownian motion $(\beta_s)_{0\le s\le m}$ for the uniform topology.

\medskip

1) Let $m' < m$. For $n$ large enough, $\lc nm' \rc < m_n$. Let $f$ be continuous bounded from $\K$ to~$\R$. We have
\begin{align}
\E{f \lp (F_{(n)}(s))_{0 \le s \le m'} \rp}\notag\\
&\hspace{-2cm} = \E{f \lp \lp \frac {S_{ns}} { \sqrt {n}} \rp_{0 \le s \le m'} \rp \,\Big| \ 
			S_{m_n} = - \sigma_n,\, \underline S_{m_n-1} > - \sigma_n}\notag\\
&\hspace{-2cm} = \E{f \lp \lp \frac {S_{ns}} { \sqrt {n}} \rp_{0 \le s \le m'} \rp \frac
	{\P\lp S_{m_n} = - \sigma_n,\, \underline S_{m_n-1} > - \sigma_n \, \lm \ \mathcal{F}_{\lc nm' \rc} \rno  \rp}
	{\P\lp S_{m_n} = - \sigma_n,\, \underline S_{m_n-1} > - \sigma_n \rp}}.\label{fpbb}
%
\end{align}
Recall the notation $Q_k^S(i) = \P(S_k=i)$. We have to deal with terms of the form
$$\P(S_k=-i,\, \underline S_{k-1} > -i) = \frac i k\, \P(S_k=-i) = \frac i k\, Q_k^S(-i),$$
where the first equality is an application of the so-called cycle lemma (see e.g. \cite[Lemma~2]{bertoin03ptf}). Using the Markov property, we obtain
\begin{align*}
\P\lp S_{m_n} = - \sigma_n,\, \underline S_{m_n-1} > - \sigma_n \, \lm \ \mathcal{F}_{\lc nm' \rc} \rno  \rp \\
%
%
&\hspace{-3cm}= \frac{\sigma_n + S_{\lc nm' \rc}}{m_n - \lc n m' \rc} \, Q^S_{m_n - \lc n m' \rc} \lp -\sigma_n - S_{\lc nm' \rc}\rp \1{\lb\underline{S}_{\lc nm' \rc} > - \sigma_n\rb}.
\end{align*}

Here again, the denominator of the fractional term in \eqref{fpbb} is the same as the numerator when~$m'$ is chosen to be $0$.
The fractional term in \eqref{fpbb} converges a.s. toward
$$\frac{\ga'_{m-m'}(-\sigma -\beta_{m'})}{\ga'_{m}(-\sigma)}\,\1{\lb {\rlap{\scriptsize \hspace{0.2mm}{\underline{\hspace{1.2mm}}}}\beta}_{m'} > - \sigma\rb},$$
and Proposition \ref{petrov} ensures that this convergence is dominated. So,
\begin{align*}
\E{f \lp (F_{(n)}(s))_{0 \le s \le m'} \rp} &\ton \E{f \lp (\beta_s)_{0 \le s \le m'} \rp \frac {\ga_{m - m'}'(-\sigma - \beta_{m'})}{\ga_m'(-\sigma)}\,
\1{\lb {\rlap{\scriptsize \hspace{0.2mm}{\underline{\hspace{1.2mm}}}}\beta}_{m'} > - \sigma\rb}}\\
		&\hspace{7mm} = \E{f \lp (F_{[0,m]}^{0\to-\sigma}(s))_{0 \le s \le m'} \rp}.
\end{align*}

\medskip
2) For any $\alpha > 0$ and $X = (X(s))_{0\le s \le x} \in \K$, we write
$$\|X\|_\alpha \de \sup_{0 \le s < t \le x} \frac{\lm X(t) - X(s) \rm}{|t-s|^\alpha}$$
its $\alpha$-H\"older norm. Proposition \ref{bcp} gives a stochastic domination of the $\alpha$-H\"older norm of $F_{(n)}$ by that of $B_{(n)}$: we may assume that $F_n = \varTheta_{r_{\nu_n}(B_n)}(B_n)$. If $0\le s < t \le m_{(n)} - r_{\nu_n}(B_n)$,
\begin{align*}
\lm F_{(n)}(t) - F_{(n)}(s) \rm &= \frac 1 {\sqrt n} \lm \varTheta_{r_{\nu_n}(B_n)}(B_n)(nt) - \varTheta_{r_{\nu_n}(B_n)}(B_n)(ns) \rm\\
				&= \frac 1 {\sqrt n} \lm B_n\lp r_{\nu_n}(B_n) + nt\rp - B_n\lp r_{\nu_n}(B_n) + ns\rp \rm\\
				&= \lm B_{(n)}\lp \frac {r_{\nu_n}(B_n)} n + t\rp - B_{(n)}\lp \frac {r_{\nu_n}(B_n)} n + s\rp \rm\\
				&\le \ln B_{(n)}\rn_\alpha \,\lm t-s \rm^\alpha.
\end{align*}

We obtain the same inequality when $m_{(n)} - r_{\nu_n}(B_n) \le s < t \le m_{(n)}$, and by the triangular inequality, we find
$$\ln F_{(n)}\rn_\alpha \le 2 \ln B_{(n)}\rn_\alpha.$$

\medskip
3) We now suppose that $0 < \alpha < 1/2$. Let $\eps>0$. Thanks to Lemma \ref{lembn}---for which we now have~$q_0$ arbitrarily large---and Kolmogorov's criterion, we can find some constant $C$ such that
\begin{equation}\label{kolfn}
\sup_n \P\lp F_{(n)} \notin K\rp < \eps \text{\hspace{5mm}with\hspace{5mm}} K \de \lb X \in \K\,:\, \|X\|_\alpha \le C \rb.
\end{equation}

Ascoli's theorem \cite[Chapter XX]{schwartz70atg} shows that $K$ is a compact set, so that the laws of the $F_{(n)}$'s are tight.

\medskip
4) We almost have the convergence of the finite-dimensional marginals of $F_{(n)}$ toward those of $F \de F_{[0,m]}^{0\to-\sigma}$. Point 1) shows that for any $p\ge 1$, $0\le s_1 < s_2 < \dots < s_p < m$,
$$\lp F_{(n)}(s_1),F_{(n)}(s_2),\dots,F_{(n)}(s_p) \rp \to \lp F(s_1),F(s_2),\dots,F(s_p) \rp.$$
It only remains to deal with the point $m$. Let $\delta > 0$. For $n$ large enough, on $\lb F_{(n)} \in K \rb$,
$$\lm F_{(n)}(m\wedge m_{(n)}) + \sigma \rm \le \lm \sigma_{(n)} - \sigma \rm + C\lm m_{(n)} - m \rm^\alpha < \delta,$$
therefore
$$\P\lp | F_{(n)}\lp m\wedge m_{(n)}\rp + \sigma | > \delta \rp \le \P\lp F_{(n)} \notin K \rp < \eps.$$

We have shown that $F_{(n)}\lp m\wedge m_{(n)}\rp$ converges in law toward the deterministic value $-\sigma$ so Slutzky's lemma allows us to conclude that the finite-dimensional marginals of $F_{(n)}$ converge toward those of $F$. This, together with the tightness of the laws of the $F_{(n)}$'s, yields the result thanks to Prokhorov's lemma.
\end{pre}

\bigskip

For any real numbers $m_1$, $m_2$, $l_1$, $l_2$, we define the bridge on $[m_1,m_2]$ from $l_1$ to $l_2$ by
$$\lp B_{[m_1,m_2]}^{l_1 \to l_2}(s)\rp_{m_1 \le s \le m_2} \de l_1 + \lp B_{[0,m_2-m_1]}^{0 \to l_2-l_1}(s-m_1)\rp_{m_1 \le s \le m_2},$$
and for $\sigma_1 > \sigma_2$, we define the first-passage bridge on $[m_1,m_2]$ from $\sigma_1$ to $\sigma_2$ by
$$\lp F_{[m_1,m_2]}^{\sigma_1 \to \sigma_2}(s)\rp_{m_1 \le s \le m_2} \de \sigma_1 + \lp F_{[0,m_2-m_1]}^{0 \to \sigma_2-\sigma_1}(s-m_1)\rp_{m_1 \le s \le m_2}.$$

\subsection{The Brownian snake}

We need a version of the Brownian snake's head driven by a first-passage Brownian bridge. There are several ways to define such an object.

We may define it as a the head of a Brownian snake with lifetime process a first-passage Brownian bridge $F_{[0,m]}^{\sigma \to 0}$ and starting from the path $0_\sigma \de t \in [0,\sigma] \mapsto 0$ (see \cite[Chapter IV]{legall99sbp} or \cite[Chapter 4]{duquesne02rtl} for a proper definition).

Let $(F_s)_{0\le s\le m}$ be a first-passage Brownian bridge from $\sigma$ to $0$. The Brownian snake driven by $F$ and started at $0_\sigma$ is the path-valued process $(F_s,(W(s,t),\, 0\le t \le F_s))_{0\le s \le m}$ whose law is defined by:
\begin{itemize}
 \item for all $0\le t \le \sigma$, $W(0,t) = 0$,
 \item for all $0\le s \le m$, $W(s,0) = 0$,
 \item the conditional law of $W(s,\cdot)$ given $F$ is the law of an inhomogeneous Markov process whose transition kernel is described as follows: for $0\le s \le s' \le m$,
	\begin{itemize}
	 \item $W(s',t) = W(s,t)$ for all $0\le t \le \inf_{[s,s']} F$,
	 \item $\lp W(s', \inf_{[s,s']} F + t) \rp_{0\le t \le F_{s'} - \inf_{[s,s']} F}$ is independent of $W(s,\cdot)$ and distributed as a real Brownian motion started from $W(s, \inf_{[s,s']} F)$ and stopped at time $F_{s'} - \inf_{[s,s']} F$.
	\end{itemize}
\end{itemize}

The head of this process is then defined by
$$\lp F_{[0,m]}^{\sigma \to 0}, Z_{[0,m]}\rp \de \lp \lp F_s \rp_{0\le s \le m}, \lp W(s, F_s)\rp_{0\le s \le m}\rp.$$

\bigskip
This description has the advantage of being very visual: $W(0,\cdot)$ is the function $0_\sigma$. Then, every time $F$ decreases, we erase the tip of the previous path, and when $F$ increases, we glue a part of an independent Brownian motion (see Figure \ref{0snake}).

\sfig{0snake}{An approximation of the conditioned Brownian snake. The first-passage bridge from $\sigma$ to $0$ is represented by the shadowy part of the figure. In order to see $W(s,\cdot)$, one must ``cut'' the surface at $s$ and look at the edge of the cut piece.}{width=10cm}

In the following, we will only need the head and not the whole process. The following description gives a direct construction of this head. Conditionally given $F = F_{[0,m]}^{\sigma \to 0}$, we define a Gaussian process $(\Gamma_s)_{0\le s \le m}$ with covariance function
$$\cov(\Gamma_s, \Gamma_{s'}) = \inf_{[s,s']} (F - \underline{F}).$$
The processes $\lp F, \Gamma \rp$ then has the same law as the process $\big( F_{[0,m]}^{\sigma \to 0}, Z_{[0,m]}\big)$ defined above. 

\bigskip

We easily see that we can derive the law of the head from the law of the snake, and it is actually also possible to recover the whole snake from its head (see \cite[Section 2]{marckert03sss}): starting from the process $(F,Z)=\big( F_{[0,m]}^{\sigma \to 0}, Z_{[0,m]}\big)$, we define
$$W(s,t) \de Z\big( \inf\{r\ge s,\, F(r) = t \} \big), \hspace{1cm}0\le t\le F(s),\, 0\le s \le m.$$

The process $\big(F(s),(W(s,t),\, 0\le t \le F(s))\big)_{0\le s \le m}$ then has the law of the Brownian snake defined above. In particular, for $s \in [0,m]$ fixed, the process
$$\lp Z\big( \inf\{r\ge s,\, F(r) = t \} \big) \rp_{\underline F(s) \le t \le F(s)}$$
has the law of a real Brownian motion started from $0$. Using time reversal invariance, we see that the process
$$\Big( Z\big( \inf\{r\ge s,\, F(r) = F(s) - x \} \big) - Z(s) \Big)_{0 \le x \le F(s) - \underline F(s)}$$
has the same law. This fact will be used in Section \ref{secpro}.

\subsection{The discrete snake}\label{snhe}

We will describe here an analog of the Brownian snake in the discrete setting. Let us first consider three sequences of integers $(\sigma_n)$, $(m_n)$ and $(l_n)$ such that
$$\sigma_{(n)} \de \frac{\sigma_n}{\sqrt{2n}} \to \sigma,\hspace{1cm} m_{(n)} \de \frac{2m_n+\sigma_n}{2n} \to m \sand
l_{(n)} \de \frac{l_n}{\gamma n^{\frac14}} \to l.$$

We call $(C_n, L_n)$ the contour pair of a random forest uniformly distributed over the set $\fF_{\sigma_n}^{m_n}$ of well-labeled forests with $\sigma_n$ trees and $m_n$ tree edges. We define
$$C_{(n)} \de \lp \frac {C_n(2nt)} {\sqrt{2n}} \rp_{0\le t \le m_{(n)}} \sand L_{(n)} \de \lp \frac {L_n(2nt)} {\g}\rp_{0\le t \le m_{(n)}}$$
their scaled versions.

We define the discrete snake $\lp W_n(i,j), 0 \le j \le C_n(i) \rp_{0 \le i \le 2m_n +\sigma_n}$ by (see Figure \ref{dsnake})
$$W_n(i,j) \de L_n \lp \sup \lb k \le i \ : \ C_n(k)=j \rb \rp = L_n \lp \inf \lb k \ge i \ : \ C_n(k)=j \rb \rp.$$
Let $(\f,\l)$ be the well-labeled forest coded by $(C_n, L_n)$. Then for $0 \le i \le 2m_n +\sigma_n$,
$$\lp W_n(i,j) \rp_{0 \le j \le C_n(i)}$$
records the labels of the unique path going from $t(\f)+1$ to $\f(i)$. As a result, $W_n(i,j)=0$ for $0 \le j \le t(\f)+1-\a(\f(i))$.

\stfig{dsnake}{Discrete snake}{width=10cm}{
\psfrag{c}[][]{\textcolor{red}{$C_n$}}
\psfrag{i}[][][.8]{$i$}
\psfrag{j}[][][.8]{$j$}
}

We then extend $W_n$ to $\{ (s,t)\,:\, s \in [0,2m_n+\sigma_n], \, t\in [0,C_n(s)] \}$ by linear interpolation and we let, for $0\le s \le m_{(n)}$, $0\le t \le C_{(n)}(s)$,
$$W_{(n)}(s,t) \de \frac {W_n(2ns, \sqrt{2n}\, t)}{\g}.$$

For each $0\le s \le m_{(n)}$, $W_{(n)}(s,\cdot)$ is a path lying in
$$\K_0 \de \lb f \in \K \ | \ f(0) = 0 \rb,$$
so that we can see $W_{(n)}$ as an element of
$$\W_0 \de \bigcup_{x \in \R_+} \C([0, x],\K_0).$$

For $X \in \W_0$, we call $\xi(X)$ the real number such that $X \in \C([0,\xi(X)],\K_0)$, and we endow~$\W_0$ with the metric
$$d_{\W_0}(X,Y) \de |\xi(X) - \xi(Y)| + \sup_{s\ge 0} d_\K\lp X(s\wedge \xi(X),\cdot),Y(s\wedge \xi(Y),\cdot) \rp.$$

\subsection{Convergence of a uniform well-labeled forest}

We will prove the following result.

\begin{prop}\label{wsnake}
The pair $(C_{(n)}, W_{(n)})$ converges weakly toward the pair $\lp F^{\sigma \to 0}_{[0,m]}, W\rp$, in the space $\lp \K, \, d_\K \rp \times \lp \W_0, \, d_{\W_0} \rp$.
\end{prop}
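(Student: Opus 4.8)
The plan is to prove convergence of the contour process $C_{(n)}$ first, then upgrade to the joint convergence of $(C_{(n)}, W_{(n)})$ by controlling the label (spatial) component conditionally on the contour. For the first step, observe that the contour function $C_n$ of a uniform forest in $\F_{\sigma_n}^{m_n}$ is (by Lemma \ref{forests} and the discussion preceding it) distributed as a simple random walk started at $\sigma_n$ and conditioned to hit $0$ for the first time at time $2m_n+\sigma_n$; equivalently, after the shift $C_n \mapsto C_n - \sigma_n$ and time change, it is exactly the discrete first-passage bridge $F_n$ considered in Lemma \ref{fn}, read with the appropriate orientation. Hence Lemma \ref{fn} directly gives $C_{(n)} \to F^{\sigma \to 0}_{[0,m]}$ in $(\K, d_\K)$ (up to the harmless reflection between ``first-passage from $0$ to $-\sigma$'' and ``from $\sigma$ to $0$'').

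For the joint convergence, I would condition on the forest's shape. Given $C_n$ — equivalently, given the underlying (unlabelled) forest $\f$ — the labels are obtained by assigning i.i.d. increments in $\{-1,0,1\}$ (uniform) to each tree edge, with all floor vertices kept at label $0$ (Remark after Definition of well-labeled forest). This is precisely the discrete-snake construction: $W_n(i,j)$ records the labels along the ancestral path from the root of the tree containing $\f(i)$ down to $\f(i)$, and for $0\le j \le t(\f)+1-\a(\f(i))$ it is identically $0$. So conditionally on $C_n$, the rescaled head $W_{(n)}(s,\cdot)$ is, along each ``excursion above the running infimum'' of $C_{(n)}$, a rescaled uniform-increment random walk, glued according to the genealogical structure encoded by $C_{(n)}$ — exactly the discrete analogue of the Gaussian process $\Gamma$ with covariance $\inf_{[s,s']}(F - \underline F)$ described in the Brownian-snake subsection. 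The strategy is the standard one (as in \cite[Chapter 6]{legall94mbp}): (i) use the already-proven convergence $C_{(n)} \to F^{\sigma \to 0}_{[0,m]}$ together with a Skorokhod coupling; (ii) establish tightness of $(C_{(n)}, W_{(n)})$ in $(\K, d_\K)\times(\W_0, d_{\W_0})$ via a Kolmogorov-type moment bound on the modulus of continuity of $W_{(n)}$ in \emph{both} arguments $s$ and $t$ — here the key estimate is that for two times $s,s'$, $W_{(n)}(s,C_{(n)}(s)) - W_{(n)}(s',C_{(n)}(s'))$ is, conditionally, a centered sum of at most $O(\lm C_{(n)}(s)-\underline{C_{(n)}}(s,s')\rm + \lm C_{(n)}(s')-\underline{C_{(n)}}(s,s')\rm + |C_{(n)}(s)-C_{(n)}(s')|)\sqrt{2n}$ independent bounded increments, so Rosenthal's inequality (Lemma \ref{rosenthal}) gives moments of order $q$ bounded by $C_q$ times a power of the modulus of $C_{(n)}$, and the already-known Hölder regularity of $C_{(n)}$ (inherited from the first-passage bridge via Lemma \ref{lembn}, with $q_0$ arbitrarily large since we deal with bounded increments) closes the loop; (iii) identify the limit of the finite-dimensional marginals, using that conditionally on $C_{(n)}\to F$ the increments of $W_{(n)}(s,\cdot)$ over disjoint pieces are independent and, by a conditional central limit theorem for the uniform $\{-1,0,1\}$-walk (variance $2/3$, matching the normalization $\gamma n^{1/4}$), converge to independent Brownian increments — which is exactly the Markovian description of the snake head, hence the limit is $(F^{\sigma\to0}_{[0,m]}, W)$; (iv) conclude via Prokhorov.

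Two points deserve care. First, the boundary time $t=C_{(n)}(s)$ and the terminal time $s=m_{(n)}$ must be handled by hand, as in the proofs of Lemmas \ref{bn} and \ref{fn}: one shows $C_{(n)}(m\wedge m_{(n)}) \to 0$ in probability and that $W_{(n)}$ does not oscillate wildly near the endpoints, so that the limiting object is genuinely killed at the right lifetime. Second — and this is where the argument is most delicate — one must verify that the \emph{floor} constraint (all $\sigma_n$ tree-roots pinned to label $0$) does not distort the limit: because $\sigma_n \asymp \sqrt{2n}$ grows, there are many trees, but each individual tree has typical size $O(\sqrt{2n})$ and contributes labels of order $n^{1/4}$, so the pinning is ``local'' and in the limit the label process restarts from $0$ at the bottom of each excursion of $F$ above its infimum — precisely the content of $W(s,0)=0$ and $W(0,t)=0$ in the snake definition.

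I expect the main obstacle to be step (ii): obtaining the uniform (in $n$) Hölder/Kolmogorov control of $W_{(n)}$ jointly in $(s,t)$, since the number of independent label increments between $W_{(n)}(s,\cdot)$ and $W_{(n)}(s',\cdot)$ is itself a random quantity governed by the fluctuations of $C_{(n)}$, so the moment bound must be set up as a double conditioning (condition on $C_{(n)}$, apply Rosenthal, then take expectation using the moment bounds on the modulus of $C_{(n)}$ from Lemma \ref{lembn}). Once this tightness is in place, the identification of the limit is a routine computation of conditional characteristic functions.
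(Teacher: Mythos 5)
Your plan is essentially the paper's own proof: convergence of the contour to a first-passage Brownian bridge via Lemma \ref{fn}, finite-dimensional convergence of the snake by exploiting the conditional Markov structure (Motzkin-path increments plus Donsker), and tightness via a conditional Kolmogorov/Rosenthal bound on the H\"older modulus of $W_{(n)}$ given $C_{(n)}$, closed by the H\"older control on $C_{(n)}$ from Lemma \ref{lembn} and \eqref{kolfn}. The paper packages the tightness estimate separately as Lemma \ref{tight} and runs the finite-dimensional step as an induction on $p$ with a Skorokhod coupling, but the ideas, key lemmas, and the double-conditioning you identify as the delicate point are exactly those used in the paper.
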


We readily obtain the following corollary:

\begin{corol}\label{snak}
The pair $(C_{(n)}, L_{(n)})$ converges weakly toward the pair $\lp F^{\sigma \to 0}_{[0,m]}, Z_{[0,m]}\rp$, in the space $\lp \K, \, d_\K \rp^2$.
\end{corol}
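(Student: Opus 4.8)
The plan is to recognize $L_{(n)}$ as the \emph{head} of the discrete snake $W_{(n)}$ and $Z_{[0,m]}$ as the head of the Brownian snake $W$, and then to deduce the corollary from Proposition~\ref{wsnake} by the continuous mapping theorem. Concretely, I would introduce the ``head map'' $\Psi\colon\W_0\to\K$ sending $X$ to $\Psi(X)\de\big(X(s,\sigma(X(s,\cdot)))\big)_{0\le s\le\xi(X)}$, which evaluates each path $X(s,\cdot)\in\K_0$ at its own killing time. From the definition of $W_n$ in Section~\ref{snhe} one has $W_n(i,C_n(i))=L_n(i)$ for every integer $i$ (because $i$ itself realizes $\sup\{k\le i: C_n(k)=C_n(i)\}$), so $L_{(n)}(s)=W_{(n)}(s,C_{(n)}(s))=\Psi(W_{(n)})(s)$ whenever $2ns\in\Z$; moreover, since $W_n$ varies by $O(1)$ over unit steps and $C_n$ by $1$, the discrepancy coming from the linear interpolations is bounded uniformly in $s$ by $O(n^{-1/4})\to 0$. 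Note also that by construction $\sigma(W_{(n)}(s,\cdot))=C_{(n)}(s)$. On the other side, the very definition of $\big(F^{\sigma\to0}_{[0,m]},Z_{[0,m]}\big)$ recalled in the Brownian snake subsection reads $Z_{[0,m]}(s)=W(s,F^{\sigma\to0}_{[0,m]}(s))$ with $F^{\sigma\to0}_{[0,m]}(s)=\sigma(W(s,\cdot))$, i.e.\ $Z_{[0,m]}=\Psi(W)$.

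It then remains to check that $\Psi$ is continuous at $W$ almost surely. Suppose $X^{(k)}\to X$ in $(\W_0,d_{\W_0})$, where $X$ is a realization of $W$ on which $(s,t)\mapsto W(s,t)$ — equivalently the constant extension $\bar X(s,y)\de X(s,y\wedge\sigma(X(s,\cdot)))$ — is jointly continuous; this holds for $W$-almost every realization by the standard regularity of the Brownian snake, and it can also be obtained from the Gaussian description with covariance $\inf_{[s,s']}(F-\underline F)$ together with a Kolmogorov-type estimate, exactly as the H\"older continuity invoked in Lemma~\ref{fn}. The definition of $d_{\W_0}$ forces $\xi(X^{(k)})\to\xi(X)$, the killing times $\sigma(X^{(k)}(s,\cdot))$ to converge to $\sigma(X(s,\cdot))$ uniformly in $s$, and $\bar X^{(k)}\to\bar X$ uniformly in $(s,y)$. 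Since all these killing times stay in a fixed compact interval and $\bar X$ is uniformly continuous on the relevant compact rectangle, evaluating $\bar X^{(k)}$ and $\bar X$ at the two converging endpoints $\sigma(X^{(k)}(s,\cdot))$ and $\sigma(X(s,\cdot))$ shows $\Psi(X^{(k)})(s)\to\Psi(X)(s)$ uniformly in $s$, hence $d_\K\big(\Psi(X^{(k)}),\Psi(X)\big)\to 0$.

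Putting the pieces together: Proposition~\ref{wsnake} gives $(C_{(n)},W_{(n)})\tolk(F^{\sigma\to0}_{[0,m]},W)$; the continuous mapping theorem applied with the map $(f,X)\mapsto(f,\Psi(X))$, which is a.s.\ continuous at $(F^{\sigma\to0}_{[0,m]},W)$, yields $(C_{(n)},\Psi(W_{(n)}))\tolk(F^{\sigma\to0}_{[0,m]},Z_{[0,m]})$, and the uniform estimate $\|L_{(n)}-\Psi(W_{(n)})\|_\infty\to 0$ transfers the convergence to $(C_{(n)},L_{(n)})$, which is the claim. The only genuinely non-routine point is the continuity of the head map $\Psi$ at the limit, and inside it the joint continuity of $(s,t)\mapsto W(s,t)$ for a snake with a first-passage-bridge lifetime; everything else is bookkeeping with the metrics $d_\K$ and $d_{\W_0}$ and with the interpolation conventions.
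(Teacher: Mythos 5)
Your argument is correct and is exactly what the paper means by ``readily obtain'' from Proposition~\ref{wsnake}: recognize $L_{(n)}$ and $Z_{[0,m]}$ as the heads of $W_{(n)}$ and $W$, control the $O(n^{-1/4})$ interpolation discrepancy, and apply the continuous mapping theorem through the head map $\Psi$. One simplification worth noting: you do not need a.s.\ joint continuity of the limiting snake $W$, because $\Psi$ is in fact $1$-Lipschitz from $(\W_0,d_{\W_0})$ to $(\K,d_\K)$ --- taking $y$ large in the supremum defining $d_\K\lp X(s\wedge\xi(X),\cdot),Y(s\wedge\xi(Y),\cdot)\rp$ bounds $\lm \Psi(X)(s\wedge\xi(X))-\Psi(Y)(s\wedge\xi(Y))\rm$ directly, so $d_\K\lp\Psi(X),\Psi(Y)\rp\le d_{\W_0}(X,Y)$ and the continuity needed for the mapping theorem is immediate.
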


Proposition \ref{wsnake} may appear stronger than Corollary \ref{snak}, but is actually not, because of the strong link between the whole snake and its head \cite{marckert03sss}. We begin by a lemma.

\begin{lem}\label{tight}
For all $0 < \delta < 1/4$, for all $\eps >0$, there exist a constant $C$ and an integer $n_0$ such that, as soon as $n\ge n_0$, $\P(W_{(n)} \notin A) < \eps$, where
$$A\de \lb X \in \W_0\,:\, \sup_{s\neq s'} \frac {d_\K\lp X(s,\cdot) - X(s',\cdot) \rp}{\lm s-s'\rm^\delta} \le C \rb.$$
\end{lem}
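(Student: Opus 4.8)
The plan is to establish a uniform (in $n$) Kolmogorov-type moment bound for the rescaled discrete snake and then invoke a Kolmogorov continuity criterion adapted to the path-space $\W_0$. The crucial quantity to control is $d_\K(W_{(n)}(s,\cdot),W_{(n)}(s',\cdot))$ for $s<s'$. Recalling the definition of $d_\K$, this splits into two pieces: the difference of lifetimes $|C_{(n)}(s)-C_{(n)}(s')|$, and the uniform distance $\sup_{y\ge0}|W_{(n)}(s,y\wedge C_{(n)}(s))-W_{(n)}(s',y\wedge C_{(n)}(s'))|$. For the first piece I would simply quote the moment estimates already proved for contour functions of forests: after rescaling, $C_{(n)}$ is (asymptotically, and with uniform bounds by Lemmas \ref{lembn}–\ref{lemb} applied to the simple random walk conditioned as in Lemma \ref{fn}) a process whose increments satisfy $\E{|C_{(n)}(t)-C_{(n)}(s)|^q}\le C_q|t-s|^{q/2}$ for all $n\ge n_0$ and all $q\ge 2$. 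So that piece causes no trouble and contributes the Hölder exponent $\le 1/2$, which is comfortably $>\delta$.

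The substance is the second piece. Here I would use the snake structure: $W_n(i,\cdot)$ and $W_n(i',\cdot)$ agree up to the level $\min_{[i,i']}C_n$ (the ``most recent common ancestor'' level), and above that level each is obtained from an \emph{independent} labeling increment along the corresponding branch. Concretely, writing $j_0 \de \inf_{[i,i']}C_n(k)$, one has $W_n(i,j)=W_n(i',j)$ for $j\le j_0$, while for $j_0\le j$ the increments $W_n(i,j_0+\cdot)-W_n(i,j_0)$ and $W_n(i',j_0+\cdot)-W_n(i',j_0)$ are, conditionally on the underlying forest shape, sums of i.i.d. centered bounded variables (each label increment is uniform on $\{-1,0,1\}$, hence has variance $2/3$) of respective lengths $C_n(i)-j_0$ and $C_n(i')-j_0$. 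Thus $\sup_y|W_n(i,y\wedge C_n(i))-W_n(i',y\wedge C_n(i'))|$ is bounded by the sum of the suprema of two such centered random walk bridges over time horizons at most $|C_n(i)-j_0|+|C_n(i')-j_0|$, which in turn is controlled (up to the conditioning on $S_{m_n}=-\sigma_n$) by the oscillation of $C_n$ between $i$ and $i'$. Using the reflection/maximal inequality together with Rosenthal's inequality (Lemma \ref{rosenthal}) applied to the increments, one gets, after rescaling by $\gamma n^{1/4}$ for labels and $\sqrt{2n}$ for lifetimes,
$$\E{\Big(\sup_{y\ge0}\big|W_{(n)}(s,y\wedge C_{(n)}(s))-W_{(n)}(s',y\wedge C_{(n)}(s'))\big|\Big)^{q}}\le C_q\,|s-s'|^{q/4}$$
for all $n\ge n_0$ and all $q$ large, where the exponent $q/4$ arises because a label fluctuation over a spatial horizon of order $|s-s'|^{1/2}$ (the typical size of the contour oscillation) is of order $|s-s'|^{1/4}$. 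The conditioning on $S_{m_n}=-\sigma_n$ (and on first passage) is absorbed exactly as in the proofs of Lemmas \ref{lembn} and \ref{fn}: on the bulk of the trajectory the density of the conditioning event is bounded above and below by constants via the local limit theorem (Proposition \ref{petrov}), and the endpoint is handled by the time-reversal invariance of these bridges, so one obtains the bound uniformly in $s,s'$ with deterministic exponents.

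Combining the two pieces gives $\E{d_\K(W_{(n)}(s,\cdot),W_{(n)}(s',\cdot))^q}\le C_q|s-s'|^{q/4}$ for every sufficiently large $q$, uniformly in $n\ge n_0$. Taking $q$ with $1/4-1/q>\delta$, the Kolmogorov continuity criterion (in the form \cite[Theorem 3.3.16]{stroock99pta}, which applies verbatim since $s\mapsto W_{(n)}(s,\cdot)$ is a process with values in the complete metric space $\K$) yields a constant $C=C(\delta,\eps,q)$ such that $\P(W_{(n)}\notin A)<\eps$ for all $n\ge n_0$, with $A$ as in the statement; the lifetime term $|\xi(W_{(n)})-\ldots|$ does not even enter since we compare $W_{(n)}(s,\cdot)$ at different times of the \emph{same} process. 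The main obstacle is the second moment estimate: one must carefully decompose $W_n(i,\cdot)-W_n(i',\cdot)$ using the common-ancestor structure, bound the relevant spatial horizons by the contour oscillation (which itself requires the conditioned random-walk estimates to be uniform in $n$), and then feed everything into Rosenthal's inequality to extract the exponent $q/4$; the conditioning is routine but tedious, exactly parallel to the arguments already carried out for Lemmas \ref{bn} and \ref{fn}.
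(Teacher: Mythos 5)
Your plan has the right skeleton and it is genuinely close to the paper's: you decompose $d_\K$ into the lifetime term $|C_{(n)}(s)-C_{(n)}(s')|$ and the sup-norm term, exploit the snake structure to see that above the level $a_n=\inf_{[s,s']}C_{(n)}$ the two paths $W_{(n)}(s,\cdot)$ and $W_{(n)}(s',\cdot)$ are independent rescaled Motzkin walks, invoke Rosenthal plus a maximal inequality on the Motzkin part, and finish with Kolmogorov's criterion. All of this matches the paper. Where you diverge is in how you handle the coupling between the contour oscillation and the label fluctuation. You aim for a clean \emph{unconditional} estimate $\E{d_\K(W_{(n)}(s,\cdot),W_{(n)}(s',\cdot))^q}\le C_q|s-s'|^{q/4}$, which forces you to bound $\E{\big(C_{(n)}(s)-\inf_{[s,s']}C_{(n)}\big)^{q/2}}$, a \emph{supremum} moment bound for a conditioned (first-passage-bridge) walk. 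You wave this away as ``routine but tedious,'' but Lemmas \ref{lembn}, \ref{lemb} and the argument around \eqref{kolfn} give only pointwise increment moments and a \emph{probability} bound on the Hölder norm, not $L^q$ bounds on sups of conditioned increments; filling that in would require either a GRR-type chaining (which yields exponent $\alpha q/2$ with $\alpha<1/2$, not $q/4$) or a local-limit-theorem density comparison applied to the whole restricted path, which is more delicate than you allow.

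The paper avoids this hurdle entirely by a two-step conditioning. It first computes the conditional moment \emph{given} $C_{(n)}$, obtaining
$\E{d_\K(\cdot,\cdot)^q\mid C_{(n)}}\le M_q(\|C_{(n)}\|_\alpha^q\vee 1)\,|s-s'|^{\alpha q/2}$,
and then conditions on the high-probability event $\{\|C_{(n)}\|_\alpha\le C\}$ from \eqref{kolfn}, on which the contour oscillation is \emph{deterministically} controlled. Kolmogorov's criterion is applied conditionally on that event, and the complementary event is handled by its small probability. This decouples the two sources of randomness (tree shape vs.\ labels) and only ever needs moment bounds for the unconditioned Motzkin walks, which Rosenthal and Doob give cheaply. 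Your plan would reach the same conclusion but at the cost of a genuinely nontrivial sup-moment estimate for the conditioned contour, and you should either carry that out explicitly or adopt the paper's conditioning trick; as written, that step is the one real gap.
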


\begin{pre}
It is based on \eqref{kolfn} and a similar inequality for Motzkin paths (which is merely Rosenthal Inequality). The fact that the steps of the random walks we consider are bounded allows us to take the $q$ of Lemma \ref{rosenthal} arbitrary large.

\medskip
Let $0\le s < s' \le m_{(n)}$.
Conditionally given $C_{(n)}$,
\begin{align*}
d_\K\lp W_{(n)}(s,\cdot),W_{(n)}(s',\cdot)\rp\\
&\hspace{-2cm}= \lm C_{(n)}(s) - C_{(n)}(s')\rm + \sup_{t\ge a_n} \lm W_{(n)}\lp s,t\wedge C_{(n)}(s)\rp - W_{(n)}\lp s',t\wedge C_{(n)}(s')\rp \rm,
\end{align*}
where $a_n \de \inf_{[s,s']} C_{(n)}$.

\bigskip
We need to distinguish two cases: 
\begin{itemize}
 \item if $b_n \de \inf_{[0,s]} C_{(n)} \le a_n$, then
	$$\lp W_{(n)}(s,t) - W_{(n)}(s,a_n) \rp_{a_n \le t \le C_{(n)}(s)}$$
	is merely a rescaled Motzkin path.
 \item if $b_n > a_n$, then $W_{(n)}(s,t)=0$ for $a_n \le t \le b_n$ and
	$$\lp W_{(n)}(s,t) - W_{(n)}(s,b_n) \rp_{b_n \le t \le C_{(n)}(s)}$$
	is a rescaled Motzkin path.
\end{itemize}

In both cases,
$$\lp W_{(n)}(s',t) - W_{(n)}(s',a_n) \rp_{a_n \le t \le C_{(n)}(s')}$$
is also a rescaled Motzkin path---independent from $\lp W_{(n)}(s,t) - W_{(n)}(s,a_n) \rp_{a_n \le t \le C_{(n)}(s)}$.

\bigskip
Treating both cases separately, we obtain that there exists a constant $M$, independent of~$s$, such that for $n$ large enough,
$$\E{\sup_{a_n\le t\le C_{(n)}(s)} \lm W_{(n)}(s,t) - W_{(n)}(s,a_n) \rm^{q} \ \Big|\ C_{(n)}} \le M \, \lm C_{(n)}(s)-a_n \rm^{\frac q 2},$$
by Lemma \ref{rosenthal}. The same inequality holds with $s'$ instead of $s$. We have
\begin{align*}
\E{d_\K\lp W_{(n)}(s,\cdot),W_{(n)}(s',\cdot)\rp^{q} \ \Big|\ C_{(n)}}
	&\le M'\, \lp \ln C_{(n)} \rn_\alpha^{q} |s-s'|^{\alpha q} + \ln C_{(n)} \rn_\alpha^{\frac q 2} |s-s'|^{\alpha \frac q 2} \rp\\
	&\le M_q \lp \ln C_{(n)} \rn_\alpha^{q} \vee 1 \rp |s-s'|^{\alpha \frac q 2}.
\end{align*}
For $C \ge 1$,
\begin{equation}\label{kol}
\E{d_\K\lp W_{(n)}(s,\cdot),W_{(n)}(s',\cdot)\rp^{q} \ \Big|\ \ln C_{(n)}\rn_\alpha \le C} \le M_q \,C^{q}|s-s'|^{\alpha \frac q 2}.
\end{equation}

Let $0 <\delta < \frac 1 4$. Then, let $0 < \alpha < 1/2$ be such that $\delta < \alpha/2$, and $\eps>0$. Thanks to \eqref{kolfn}, we may find a constant $C$ such that, for $n$ sufficiently large,
$$\P\lp \ln C_{(n)} \rn_\alpha > C \rp < \eps.$$

For this $C$, the inequality \eqref{kol} allows us to apply Kolmogorov's criterion \cite[Theorem 3.3.16]{stroock99pta}: we find a constant $C'$ such that, for $n$ large enough,
$$\P\lp \sup_{s\neq s'} \frac {d_\K\lp W_{(n)}(s,\cdot) - W_{(n)}(s',\cdot) \rp}{\lm s-s'\rm^\delta} > C' \ \Big|\ \ln C_{(n)}\rn_\alpha \le C\rp < \eps.$$
Finally,
$$\P\lp \sup_{s\neq s'} \frac {d_\K\lp W_{(n)}(s,\cdot) - W_{(n)}(s',\cdot) \rp}{\lm s-s'\rm^\delta} > C' \rp < \frac \eps {1-\eps} + \eps,$$
which is what we needed.
\end{pre}

\begin{preuv}{of Proposition \ref{wsnake}}
We begin by showing the convergence of a finite number of trajectories, together with the whole contour process, and then conclude by a tightness argument using Lemma \ref{tight}.

\paragraph{Convergence of the finite-dimensional laws.}Let $p \ge 1$ and $0 \le s_1 < \dots < s_p < m$. We will show by induction on $p$ that
\begin{equation}\label{ind}
\lp (C_{(n)}(s))_{0 \le s\le m_{(n)}}, W_{(n)}(s_1,\cdot), \dots, W_{(n)}(s_p,\cdot) \rp \tol \lp F_{[0,m]}^{\sigma \to 0}, W(s_1,\cdot), \dots, W(s_p,\cdot) \rp.
\end{equation}
Because $m_{(n)} \to m$, for $n$ sufficiently large, $s_p \le m_{(n)}$ and the vector we consider is well-defined.

\bigskip
1) For $p=1$, we may only consider the case $s_1=0$. $(C_n(i))_{0\le i \le 2m_n+\sigma_n}$ is a discrete first-passage bridge on $[0,2m_n+\sigma_n]$ from $\sigma_n$ to $0$ and $W_n(0,j)=0$ for $0\le j \le \sigma_n$. Lemma \ref{fn} thus ensures us that
$$\lp (C_{(n)}(s))_{0\le s\le m_{(n)}}, (W_{(n)}(0,t))_{0\le t\le \sigma_{(n)}} \rp \tol \lp (F_{[0,m]}^{\sigma \to 0}(s))_{0 \le s\le m}, (W(0,t))_{0\le t\le \sigma} \rp.$$

\bigskip
2) Let us assume \eqref{ind} with $p-1$ instead of $p$. There exists a Motzkin path $M$, independent of $C_{(n)}$ and $W_{(n)}(s_i, \cdot)$, $1\le i \le p-1$, such that conditionally given 
$$\lp (C_{(n)}(s))_{0 \le s\le m_{(n)}}, W_{(n)}(s_1,\cdot), \dots, W_{(n)}(s_{p-1},\cdot) \rp,$$
for $0\le t \le C_{(n)}(s_p)$,
$$W_{(n)}(s_p,t) = W_{(n)}(s_{p-1},t\wedge a_n) + \frac{M_{\sqrt{2n}(t-a_n)^+}}{\g}$$
where $a_n \de \inf_{[s_{p-1}, s_p]}C_{(n)}$ and $x^+ \de x .\1{\{x\ge 0\}}$ stands for the positive part of $x$. The Donsker Invariance Principle \cite{billingsley68cpm} ensures that
$$\lp\frac {M_{\sqrt{2n} t}}{\g} \rp_{t\ge0}$$
converges weakly toward a Brownian motion $\beta$ for the uniform topology on every compact sets.

\bigskip
By means of the Skorokhod representation theorem (see e.g. \cite[Theorem 3.1.8]{ethier86mpc}), we may and will assume that this convergence holds almost surely. We also suppose that \eqref{ind} holds for~$p-1$.
Then, a.s.,
$$\lp W_{(n)}(s_p,t)\rp_{0\le t \le C_{(n)}(s_p)}
	\to \lp W(s_{p-1},t \wedge a) + \beta_{(t-a)^+}\rp_{0\le t \le F_{[0,m]}^{\sigma \to 0}(s_p)}$$
where $a \de \inf_{[s_{p-1}, s_p]}F_{[0,m]}^{\sigma \to 0}$. To see this, observe that
$$\lm C_{(n)}(s_p) - F_{[0,m]}^{\sigma \to 0}(s_p) \rm \to 0$$
and
\begin{align*}
\sup_t \lm W_{(n)}(s_{p-1},t\wedge a_n) - W(s_{p-1},t\wedge a) \rm
	&\le \sup_{0\le t \le a_n}\lm W_{(n)}(s_{p-1}, t) - W(s_{p-1}, t) \rm\\
		&\hspace{1cm}+\sup_{a_n \wedge a \le t \le a_n \vee a }\lm W(s_{p-1}, t) - W(s_{p-1}, a_n \wedge a) \rm\\
		& \to 0,
\end{align*}
by continuity of $W(s_{p-1},\cdot)$. A similar inequality holds for $M$.

\bigskip
Finally, the law of
$$\lp W(s_{p-1},t \wedge a) + \beta_{(t-a)^+}\rp_{0\le t \le F_{[0,m]}^{\sigma \to 0}(s_p)}$$
is that of $W(s_p,\cdot)$, conditionally given
$$\lp (F_{[0,m]}^{\sigma \to 0}(s))_{0 \le s\le m'}, W(s_1,\cdot), \dots, W(s_{p-1},\cdot) \rp,$$
which is precisely what we wanted.

\paragraph{Tightness.}Let $0 < \delta < 1/4$ and $\eps >0$. Lemma \ref{tight} provides us with a constant $C$ and an integer $n_0$ such that for all $n\ge n_0$, $\P(W_{(n)} \notin A) < \eps$, where
$$A\de \lb X \in \W_0\,:\, \sup_{s\neq s'} \frac {d_\K\lp X(s,\cdot) - X(s',\cdot) \rp}{\lm s-s'\rm^\delta} \le C \rb.$$

Let $(s_k)_{k\ge 1}$ be a countable dense subset of $[0,m)$. As for every $k\ge 1$, $\lp W_{(n)}(s_k,\cdot) \rp_n$ is tight, we can find compact sets $K_k \subseteq \W_0$ such that for all $k \ge 1$, for all $n \ge n_0$,
$$\P\lp W_{(n)}(s_k,\cdot) \notin K_k \rp < \frac \eps{2^k}.$$

The set
$$\mathbb K \de A \cap \lb X \in \W_0\,:\, \forall k \ge 1,\, X(s_k,\cdot) \in K_k \rb.$$
is a compact subset of $\W_0$ by Ascoli's theorem \cite[XX]{schwartz70atg} and for $n \ge n_0$, $\P\lp W_{(n)} \notin \mathbb K \rp <2\eps$, hence the tightness of the sequence of $W_{(n)}$'s laws.
\end{preuv}


\section{Proof of Theorem \ref{cvq}}\label{secpro}

We adapt the proof given in \cite{legall07tss} for the case $g=0$ to our case $g\ge 1$.

\subsection{Setting}

Let $\q_n$ be uniformly distributed over the set $\Q_n$ of bipartite quadrangulations of genus~$g$ with~$n$ faces. Conditionally given $\q_n$, we take $v_n$ uniformly over $V(\q_n)$ so that $(\q_n,v_n)$ is uniform over the set $\Qb_n$ of pointed bipartite quadrangulations of genus $g$ with $n$ faces. Recall that every element of $\Q_n$ has the same number of vertices: $n+2-2g$. Through the Chapuy-Marcus-Schaeffer bijection, $(\q_n,v_n)$ corresponds to a uniform well-labeled $g$-tree with $n$ edges $(\t_n,\l_n)$. The parameter $\eps \in \{-1,1\}$ appearing in the bijection will be irrelevant to what follows.

Recall the notations $\t_n(0)$, $\t_n(1)$, \dots, $\t_n(2n)$ and $\q_n(0)$, $\q_n(1)$, \dots, $\q_n(2n)$ from Section~\ref{seccms}. For technical reasons, it will be more convenient, when traveling along the $g$-tree, not to begin by its root but rather by the first edge of the first forest. Precisely, we define
$$\rr\t_n(i) \de \lb \begin{array}{cll}
			\t_n(i - u_n +2n) &\text{ if } &0 \le i \le u_n,\\
			\t_n(i - u_n) &\text{ if } &u_n \le i \le 2n,
		      \end{array}\rno$$
and
$$\rr\q_n(i) \de \lb \begin{array}{cll}
			\q_n(i - u_n +2n) &\text{ if } &0 \le i \le u_n,\\
			\q_n(i - u_n) &\text{ if } &u_n \le i \le 2n,
		      \end{array}\rno$$
where $u_n$ is the integer recording the position of the root in the first forest of $\t_n$. We endow $\ent 0 {2n}$ with the pseudo-metric $d_n$ defined by
$$d_n(i,j) \de d_{\q_n}\lp \rr\q_n(i),\rr\q_n(j)\rp.$$

We define the equivalence relation $\sim_n$ on $\ent 0 {2n}$ by declaring that $i \sim_n j$ if $\rr\q_n(i)=\rr\q_n(j)$, that is if $d_n(i,j) =0$. We call $\pi_n$ the canonical projection from $\ent 0 {2n}$ to $\ent 0 {2n}_{/\sim_n}$ and we slightly abuse notation by seeing $d_n$ as a metric on $\ent 0 {2n}_{/\sim_n}$ defined by $d_n(\pi_n(i),\pi_n(j)) \de d_n(i,j)$. In what follows, we will always make the same abuse with every pseudo-metric. The metric space $\lp \ent 0 {2n}_{/\sim_n},d_n \rp$ is then isometric to $\lp V(\q_n)\bs\{v_n\},d_{\q_n} \rp$, which is at $\dGH$-distance~$1$ from the space $\lp V(\q_n),d_{\q_n} \rp$.

\bigskip

We extend the definition of $d_n$ to non integer values by linear interpolation: for $s,t\in [0,2n]$,
\begin{equation}\label{extdn}
d_n(s,t)\de \ll s\, \ll t\, d_n(\lc s\rc,\lc t\rc) +
	      \ll s\, \cc t\, d_n(\lc s\rc,\lf t\rf) +
	      \cc s\, \ll t\, d_n(\lf s\rf,\lc t\rc) +
	      \cc s\, \cc t\, d_n(\lf s\rf,\lf t\rf),
\end{equation}
where $\lf s\rf \de \sup\{k \in \Z,\, k\le s\}$, $\lc s\rc \de \lf s \rf + 1$, $\ll s \de s - \lf s\rf$ and $\cc s \de \lc s\rc - s$. Beware that $d_n$ is no longer a pseudo-metric on $[0,2n]$: indeed, $d_n(s,s)=2\, \ll s\, \cc s\, d_n(\lc s\rc,\lf s\rf)>0$ as soon as $s \notin \Z$. The triangular inequality, however, remains valid for all $s,t\in [0,2n]$. Using the Chapuy-Marcus-Schaeffer bijection, it is easy to see that $d_n(\lc s\rc,\lf s\rf)$ is equal to either $1$ or $2$, so that $d_n(s,s) \le 1/2$.

\bigskip

As usual, we define the rescaled version: for $s,t \in [0,1]$, we let
\begin{equation}\label{rescdn}
d_{(n)}(s,t) \de \frac 1 {\g}\, d_n(2ns,2nt),
\end{equation}
so that
\begin{equation}\label{gh}
\dGH \lp \lp \frac 1 {2n} \ent 0 {2n}_{/\sim_n},d_{(n)} \rp, \lp V(\q_n),\frac 1 {\g}\, d_{\q_n} \rp \rp \le \frac 1 {\g}.
\end{equation}

\subsection{Tightness of the distance processes}

The first step is to show the tightness of the processes $d_{(n)}$'s laws. For that matter, we use the bound \eqref{lemmed}. We define
$$d_n^\circ(i,j) \de \l_n\lp \rr\t_n(i) \rp + \l_n\lp \rr\t_n(j)\rp - 2 \max \lp \min_{ k \in \overrightarrow{\ent i j}} \l_n\lp \rr\t_n(k)\rp ,\min_{ k \in \overrightarrow{\ent j i}} \l_n\lp \rr\t_n(k) \rp \rp +2,$$
we extend it to $[0,2n]$ as we did for $d_n$ by \eqref{extdn}, and we define its rescaled version $d_{(n)}^\circ$ as we did for $d_n$ by \eqref{rescdn}. We readily obtain the following bound,
\begin{equation}\label{bound2}
d_{(n)}(s,t) \le d^\circ_{(n)}(s,t).
\end{equation}

\subsubsection*{Expression of $d_{(n)}^\circ$ in terms of the spatial contour function of the $g$-tree}

Although it is not straightforward to define a contour function for the whole $g$-tree, we may define its spatial contour function $\L_n: [0,2n] \to \R$ by,
$$\L_n(i) \de \l_n\lp\rr\t_n(i)\rp - \l_n\lp\rr\t_n(0)\rp, \hspace{1cm}0\le i \le 2n,$$
and by linearly interpolating it between integer values. The rescaled version is then defined by
$$\L_{(n)} \de \lp \frac {\L_n(2nt)} {\g}\rp_{0\le t \le 1},$$
and we easily see that
$$d^\circ_{(n)}(s,t) = \L_{(n)}(s) + \L_{(n)}(t) - 2 \max \lp \min_{ x \in \overrightarrow{[s,t]}} \L_{(n)}(x),\min_{ x \in \overrightarrow{[t,s]}} \L_{(n)}(x) \rp + O\big( n^{\frac 14} \big)$$
where
$$\overrightarrow{[s,t]} \de \lb \begin{array}{cll}
					[s,t]			&\text{ if } & s \le t, \\
					\left[s,1\right] \cup [0,t]	&\text{ if } & t < s.
				    \end{array}\rno$$

\subsubsection*{Convergence results}

As in Section \ref{decomp}, we call $\s_n$ the scheme of $\t_n$, $(\f_n^\e,\l_n^\e)_{\e\in\vec E(\s_n)}$ its well-labeled forests, $(m_n^\e)_{\e\in\vec E(\s_n)}$ and $(\sigma_n^\e)_{\e\in\vec E(\s_n)}$ respectively their sizes and lengths, $(l_n^v)_{v\in V(\s_n)}$ the shifted labels of its nodes, $(\mM_n^{\e})_{\e \in \vec E(\s_n)}$ its Motzkin bridges, and $u_n$ the integer recording the position of the root in the first forest $\f_n^{\e_*}$.
We call $(C_n^\e,L_n^\e)$ the contour pair of the well-labeled forest $(\f_n^\e,\l_n^\e)$ and we extend the definition of $\mM_n^\e$ to $[0,\sigma_n^\e]$ by linear interpolation.

As usual, we define the rescaled versions of these objects
$$m_{(n)}^\e \de \frac {2m_n^\e + \sigma_n^\e}{2n}\scom
\sigma_{(n)}^\e \de \frac{\sigma_n^\e}{\sqrt{2n}}\scom
l_{(n)}^v \de \frac{l_n^v}{\g}\scom
u_{(n)} \de \frac{u_n}{2n}$$
and
$$C_{(n)}^\e \de \lp \frac {C_n^\e(2nt)} {\sqrt{2n}} \rp_{0\le t \le m_{(n)}^\e},\ 
L_{(n)}^\e \de \lp \frac {L_n^\e(2nt)} {\g}\rp_{0\le t \le m_{(n)}^\e},\ 
\mM_{(n)}^\e \de \lp \frac {\mM_n^\e(\sqrt {2n}\, t)}{\g} \rp_{0\le t \le \sigma_{(n)}^\e}.$$


Combining the results of Proposition \ref{cvint}, Lemma\footnote{Remark that $\g = \sqrt{\!\frac23}\, \sqrt{\!\!\sqrt{2n}}$.} \ref{bn} and Corollary \ref{snak}, we find that the vector
$$\lp \s_n, \big( m_{(n)}^\e\big)_{\e\in \vec E(\s_n)}, \big(\sigma_{(n)}^\e\big)_{\e\in \vec E(\s_n)}, \big( l_{(n)}^v\big)_{v\in V(\s_n)}, u_{(n)}, \big( C_{(n)}^\e, L_{(n)}^\e \big)_{\e\in \vec E(\s_n)}, \big( \mM_{(n)}^\e \big)_{\e\in \vec E(\s_n)} \rp$$
converges in law toward the random vector
$$\lp \s_\infty, \lp m_\infty^\e\rp_{\e\in \vec E(\s_\infty)}, \lp\sigma_\infty^\e\rp_{\e\in \vec E(\s_\infty)}, \lp l_\infty^v\rp_{v\in V(\s_\infty)}, u_\infty, \lp C_\infty^\e, L_\infty^\e \rp_{\e\in \vec E(\s_\infty)}, \lp \mM_\infty^\e \rp_{\e\in \vec E(\s_\infty)} \rp$$
whose law is defined as follows:
\begin{itemize}
	\item the law of the vector
	$$\mathfrak I_\infty \de \lp \s_\infty, \lp m_\infty^\e\rp_{\e\in \vec E(\s_\infty)}, \lp\sigma_\infty^\e\rp_{\e\in \vec E(\s_\infty)}, \lp l_\infty^v\rp_{v\in V(\s_\infty)}, u_\infty \rp$$
	is the probability $\mu$ defined before Proposition \ref{cvint},
	\item conditionally given $\mathfrak I_\infty$, 
	\begin{itemize}
		\item the processes $\lp C_\infty^\e, L_\infty^\e \rp$, ${\e\in \vec E(\s_\infty)}$ and $\lp \mM_\infty^\e \rp$, ${\e\in \ori E(\s_\infty)}$ are independent,
		\item the process $\lp C_\infty^\e, L_\infty^\e \rp$ has the law of a Brownian snake's head on $[0,m_\infty^\e]$ going from $\sigma_\infty^\e$ to $0$:
		$$\lp C_\infty^\e, L_\infty^\e \rp \law \lp F_{[0,m_\infty^\e]}^{\sigma_\infty^\e \to 0}, Z_{[0,m_\infty^\e]}\rp,$$
		\item the process $\lp \mM_\infty^\e \rp$ has the law of a Brownian bridge on $[0,\sigma_\infty^\e]$ from $0$ to $l_\infty^\e \de l_\infty^{\e^+} - l_\infty^{\e^-}$:
		$$\lp \mM_\infty^\e \rp \law B_{[0,\sigma_\infty^\e]}^{0 \to l_\infty^\e},$$
		\item the Motzkin bridges are linked through the relation
		$$\mM_\infty^{\bar\e}(s) = \mM_\infty^\e(\sigma_\infty^\e-s) - l_\infty^\e.$$
	\end{itemize}
\end{itemize}

Applying the Skorokhod theorem, we may and will assume that this convergence holds almost surely. As a result, note that for $n$ large enough, $\s_n=\s_\infty$.

\subsubsection*{Decomposition of $\L_{(n)}$ along the forests}

In order to study the convergence of $\L_{(n)}$, we will express it in terms of the $L_{(n)}^\e$'s and $\mM_{(n)}^\e$'s. First, the labels in the forest $(\f_n^\e,\l_n^\e)$ are to be shifted by the value of the Motzkin path $\mM_n^\e$ at the time telling which subtree is visited: recall the definition \eqref{lele} of the process
$$\L_n^\e \de \Big( L_n^\e(t) + \mM_n^\e\big( \sigma_n^\e - \underline C_n^\e(t) \big) \Big)_{0\le t \le 2m_n^\e +\sigma_n^\e}.$$
We define its rescaled version
$$\L_{(n)}^\e \de \lp \frac {\L_n^\e(2nt)} {\g}\rp_{0\le t \le m_{(n)}^\e} = \lp L_{(n)}^\e(t) + \mM_{(n)}^\e\big( \sigma_{(n)}^\e - \underline C_{(n)}^\e(t) \big) \rp_{0\le t \le m_{(n)}^\e},$$
as well as its limit in the space $\lp \K, \, d_\K \rp$,
$$\L_{(n)}^\e \ton \L_\infty^\e \de \Big( L_\infty^\e(t) + \mM_\infty^\e\big( \sigma_\infty^\e - \underline C_\infty^\e(t) \big) \Big)_{0\le t \le m_\infty^\e}.$$

\bigskip

We then need to concatenate these processes. For $f,g\in \K_0$ two functions started at $0$, we call $f \bullet g \in \K_0$ their concatenation defined by $\sigma(f \bullet g) \de \sigma(f) + \sigma(g)$ and, for $0\le t \le \sigma(f \bullet g)$,
$$f \bullet g(t) \de \lb \begin{array}{cll}
					f(t)				& \text{ if } & 0\le t \le \sigma(f),\\
					f(\sigma(f)) + g(t-\sigma(f))	& \text{ if } & \sigma(f) \le t \le \sigma(f) + \sigma(g).
				     \end{array}\rno$$

We sort the half-edges of $\s_n$ according to its facial order, beginning with the root: $\e_1=\e_*$, \dots, $\e_\k$ and we see that
$$\L_{(n)} = \L_{(n)}^{\e_1} \bullet \L_{(n)}^{\e_2} \bullet \dots \bullet \L_{(n)}^{\e_\k}.$$
We also sort the half-edges of $\s_\infty$ in the same way and define $\L_\infty\de\L_\infty^{\e_1} \bullet \L_\infty^{\e_2} \bullet \dots \bullet \L_\infty^{\e_\k}$.

\begin{lem}
The concatenation is continuous from $(\K_0,d_\K)^2$ to $(\K_0,d_\K)$.
\end{lem}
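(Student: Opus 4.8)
The plan is to exploit the fact that $(\K_0,d_\K)$ is a metric space, so that continuity is the same as sequential continuity: I would fix sequences $f_n\to f$ and $g_n\to g$ in $(\K_0,d_\K)$ and prove $f_n\bullet g_n\to f\bullet g$. Writing $\sigma_f\de\sigma(f)$, $\sigma_{f_n}\de\sigma(f_n)$, and similarly for $g$, recall that $d_\K(f_n,f)\to 0$ already yields $\sigma_{f_n}\to\sigma_f$ and $\sup_{y\ge 0}|f_n(y\wedge\sigma_{f_n})-f(y\wedge\sigma_f)|\to 0$, and likewise for $g$. Since the lifetime of $f_n\bullet g_n$ is $\sigma_{f_n}+\sigma_{g_n}$, the lifetime part of $d_\K(f_n\bullet g_n,f\bullet g)$ is at most $|\sigma_{f_n}-\sigma_f|+|\sigma_{g_n}-\sigma_g|\to 0$, so all the work is in the supremum part.

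The key step I would carry out first is to record the closed formula
$$(f\bullet g)\big(y\wedge(\sigma_f+\sigma_g)\big)=f(y\wedge\sigma_f)+g\big((y-\sigma_f)^+\wedge\sigma_g\big),\qquad y\ge 0,$$
obtained directly from the definition of $\bullet$ by distinguishing $y\le\sigma_f$ from $y>\sigma_f$ (and similarly $y>\sigma_f+\sigma_g$), together with the analogous formula for $f_n\bullet g_n$. Subtracting and applying the triangle inequality bounds $\big|(f_n\bullet g_n)(y\wedge(\sigma_{f_n}+\sigma_{g_n}))-(f\bullet g)(y\wedge(\sigma_f+\sigma_g))\big|$ by $\big|f_n(y\wedge\sigma_{f_n})-f(y\wedge\sigma_f)\big|+\big|g_n(z_n\wedge\sigma_{g_n})-g(z\wedge\sigma_g)\big|$, where $z_n\de(y-\sigma_{f_n})^+$ and $z\de(y-\sigma_f)^+$. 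Taking the supremum over $y\ge 0$, the first term is at most $d_\K(f_n,f)$. For the second I would insert $g(z_n\wedge\sigma_g)$: the difference $|g_n(z_n\wedge\sigma_{g_n})-g(z_n\wedge\sigma_g)|$ is at most $d_\K(g_n,g)$ (it is one instance of the supremum defining that distance), while $|g(z_n\wedge\sigma_g)-g(z\wedge\sigma_g)|\le\omega_g(|z_n-z|)\le\omega_g(|\sigma_{f_n}-\sigma_f|)$, using that $x\mapsto x^+$ and $x\mapsto x\wedge\sigma_g$ are $1$-Lipschitz and writing $\omega_g$ for the modulus of continuity of the (uniformly continuous) function $g$ on the compact interval $[0,\sigma_g]$. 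Combining,
$$d_\K(f_n\bullet g_n,\,f\bullet g)\le|\sigma_{f_n}-\sigma_f|+|\sigma_{g_n}-\sigma_g|+d_\K(f_n,f)+d_\K(g_n,g)+\omega_g\big(|\sigma_{f_n}-\sigma_f|\big)\ton 0.$$

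I do not expect a genuine obstacle: the statement is elementary. The only point requiring a little care is that this is not a uniform Lipschitz estimate for $\bullet$ — the bound involves the modulus of continuity of the limiting function $g$, because the ``tail'' piece of the concatenation gets shifted horizontally by the varying amount $\sigma_{f_n}-\sigma_f$ — which is precisely why I would phrase the argument through sequential continuity rather than attempt a global modulus of continuity for the map $\bullet$ itself. The remaining bookkeeping, namely consistently writing every evaluation of $f,g,f_n,g_n$ as an evaluation at the minimum of its argument with the relevant lifetime so as to stay inside each function's domain, is routine.
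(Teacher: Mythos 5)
Your proof is correct and takes essentially the same sequential-continuity approach as the paper: both split the concatenation into its two pieces, control the vertical mismatch by $d_\K(f_n,f)$ and $d_\K(g_n,g)$, and absorb the horizontal shift $\sigma(f_n)-\sigma(f)$ of the second piece via a modulus-of-continuity estimate. The paper does this by choosing a nearby reference time $\tilde t$ and appealing to the uniform continuity of $f\bullet g$ (note its displayed equality in the case $\sigma(f_n)<t$ actually drops the term $f_n(\sigma(f_n))-f(\sigma(f))$, which your bound accounts for), whereas your closed formula for $(f\bullet g)\big(y\wedge\sigma(f\bullet g)\big)$ together with the modulus of continuity of $g$ gives a slightly tidier rendering of the same idea.
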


\begin{proof}
Let $(f_n,g_n)$ be a sequence of functions in $\K_0^2$ converging toward $(f,g) \in \K_0^2$ and $\eps >0$. There exist an $0 <\eta < \eps$ and an $n_0$ such that
$$|s - t| < \eta \Rightarrow |f\bullet g(s) - f\bullet g(t)|  < \eps \sand n \ge n_0 \Rightarrow d_\K(f_n,f)\, \vee\, d_\K(g_n,g) <\eta.$$

Let $0 \le t \le \sigma(f\bullet g) \wedge \sigma(f_n \bullet g_n)$ and $n \ge n_0$ be fixed. If $t \le \sigma(f_n)$, we call $\tilde t \de t\wedge \sigma(f)$. In that case,
$$|f_n \bullet g_n(t) - f\bullet g(\tilde t)| = |f_n (t) - f(t \wedge \sigma(f))| \le d_\K(f_n,f) < \eps.$$
If $\sigma(f_n) < t $, we call $\tilde t \de \sigma(f) + (t-\sigma(f_n))\wedge \sigma(g)$ and we have
$$|f_n \bullet g_n(t) - f\bullet g(\tilde t)| = |g_n ((t-\sigma(f_n))\wedge \sigma(g_n)) - g((t-\sigma(f_n))\wedge \sigma(g))| \le d_\K(g_n,g) < \eps.$$
In both cases, $|t-\tilde t| < \eta$, so that $|f\bullet g(\tilde t) - f\bullet g(t)|  < \eps$. Hence
\qed86$$d_\K(f_n \bullet g_n,f \bullet g) < |\sigma(f_n) - \sigma(f)| + |\sigma(g_n) - \sigma(g)| + 2 \eps < 4\eps.$$
\end{proof}

This ensures us that $\L_{(n)}$ converges in $(\K,d_\K)$ toward $\L_\infty$, so that $\lp d^\circ_{(n)}(s,t)\rp_{0 \le s,t \le 1}$ converges in $\lp \C([0,1]^2,\R),\|\cdot\|_\infty \rp$ toward $\lp d^\circ_\infty(s,t)\rp_{0 \le s,t \le 1}$ defined by
$$d^\circ_\infty(s,t) \de \L_\infty(s) + \L_\infty(t) - 2 \max \lp \min_{ x \in \overrightarrow{[s,t]}} \L_\infty(x),\min_{ x \in \overrightarrow{[t,s]}} \L_\infty(x) \rp.$$

\subsubsection*{Tightness}

\begin{lem}\label{tightd}
The sequence of the laws of the processes
$$\lp d_{(n)}(s,t) \rp_{0 \le s,t \le 1}$$
is tight in the space of probability measure on $\C([0,1]^2,\R)$.
\end{lem}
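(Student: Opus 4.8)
The plan is to deduce the tightness from the two facts established in the preceding paragraphs: the domination $d_{(n)}(s,t)\le d^\circ_{(n)}(s,t)$ of \eqref{bound2}, and the convergence in $\big(\C([0,1]^2,\R),\|\cdot\|_\infty\big)$ of $\big(d^\circ_{(n)}(s,t)\big)_{0\le s,t\le1}$ toward the continuous limit $\big(d^\circ_\infty(s,t)\big)_{0\le s,t\le1}$. We keep working under the (Skorokhod) coupling set up earlier in this section, so that this last convergence holds almost surely, and we recall that each $d_{(n)}$ is genuinely an element of $\C([0,1]^2,\R)$, being the bilinear interpolation \eqref{extdn} of a function defined on a grid. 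By the classical tightness criterion for $\C([0,1]^2,\R)$ \cite{billingsley68cpm}, it suffices to prove that the laws of $d_{(n)}(0,0)$ are tight on $\R$ and that, for every $\eps>0$,
$$\lim_{\delta\to0}\ \limsup_{n\to\infty}\ \P\bigg(\sup_{|(s,t)-(s',t')|\le\delta}\lm d_{(n)}(s,t)-d_{(n)}(s',t')\rm>\eps\bigg)=0.$$
The first point is trivial since $d_{(n)}(0,0)=\frac{1}{\g}\,d_n(0,0)=0$ for every $n$ ($0$ being an integer and $d_{\q_n}$ a graph metric).

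For the modulus of continuity, the point is that, although the interpolated $d_{(n)}$ is not a pseudo-metric on $[0,1]$, it is symmetric and does satisfy the triangle inequality at all points of $[0,1]$ (as recalled just after \eqref{extdn}). Hence, for all $s,s',t,t'\in[0,1]$,
$$\lm d_{(n)}(s,t)-d_{(n)}(s',t')\rm\le d_{(n)}(s,s')+d_{(n)}(t,t')\le d^\circ_{(n)}(s,s')+d^\circ_{(n)}(t,t'),$$
the second inequality being \eqref{bound2}, whence
$$\sup_{|(s,t)-(s',t')|\le\delta}\lm d_{(n)}(s,t)-d_{(n)}(s',t')\rm\ \le\ 2\sup_{|a-b|\le\delta,\ a,b\in[0,1]}d^\circ_{(n)}(a,b).$$
It only remains to bound this last quantity. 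Since $d^\circ_{(n)}\to d^\circ_\infty$ uniformly on the compact $[0,1]^2$, and $d^\circ_\infty$ is continuous and vanishes on the diagonal (as is clear from its definition), for every $\eps>0$ one may fix $\delta_0>0$ with $\sup_{|a-b|\le\delta_0}d^\circ_\infty(a,b)<\eps/4$; then, almost surely,
$$\limsup_{n\to\infty}\ \sup_{|a-b|\le\delta_0}d^\circ_{(n)}(a,b)\ \le\ \sup_{|a-b|\le\delta_0}d^\circ_\infty(a,b)\ <\ \frac{\eps}{4},$$
so that, almost surely, $\limsup_n\sup_{|(s,t)-(s',t')|\le\delta_0}\lm d_{(n)}(s,t)-d_{(n)}(s',t')\rm<\eps$. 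By dominated convergence, $\limsup_n\P\big(\sup_{|(s,t)-(s',t')|\le\delta_0}\lm d_{(n)}(s,t)-d_{(n)}(s',t')\rm>\eps\big)=0$, and since this supremum is nondecreasing in $\delta$ the displayed double limit follows. This proves the lemma.

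In truth, the statement is an almost formal corollary of the two facts recalled at the outset, so I do not expect a genuine obstacle. The only points needing a little attention are bookkeeping ones: the interpolated $d_{(n)}$ fails to be a pseudo-metric, so one cannot naively put $d_{(n)}(s,s)=0$ for non-integer $s$, but the resulting diagonal defect is $O(n^{-1/4})$ and is swallowed by the uniform convergence of $d^\circ_{(n)}$ to the continuous $d^\circ_\infty$; and one must invoke the appropriate tightness criterion on $\C([0,1]^2,\R)$. If one preferred to avoid the coupling, a quantitative variant through Kolmogorov's criterion is equally available, relying on moment bounds for the increments of $\L_{(n)}$ --- hence of $d^\circ_{(n)}$ --- that follow from the H\"older estimates on first-passage bridges and on Motzkin walks obtained in Section~\ref{secmf}.
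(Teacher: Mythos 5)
Your proof is correct and follows essentially the same route as the paper: you use the same key inequality $\lm d_{(n)}(s,t)-d_{(n)}(s',t')\rm\le d^\circ_{(n)}(s,s')+d^\circ_{(n)}(t,t')$ together with the (Skorokhod-coupled) a.s. uniform convergence of $d^\circ_{(n)}$ to the continuous $d^\circ_\infty$, which vanishes on the diagonal. The only cosmetic difference is that you verify the modulus-of-continuity tightness criterion directly, whereas the paper packages the same estimate into an explicit compact set $\K_\eps$ and uses (reverse) Fatou rather than dominated convergence to pass to the limit in probability.
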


\begin{pre}
First observe that, for every $s$, $s'$, $t$, $t'\in[0,1]$,
$$\lm d_{(n)}(s,t) - d_{(n)}(s',t') \rm \le d_{(n)}(s,s') + d_{(n)}(t,t') \le d^\circ_{(n)}(s,s') + d^\circ_{(n)}(t,t').$$

By Fatou's lemma, we have for every $k \in \N$ and $\delta > 0$,
$$\limsup_{n\to \infty} \P \lp \sup_{|s-s'| \le \delta} d_{(n)}^\circ(s,s') \ge 2^{-k} \rp \le \P \lp \sup_{|s-s'| \le \delta} d_\infty^\circ(s,s') \ge 2^{-k} \rp.$$
Since $d_\infty^\circ$ is continuous and null on the diagonal, for $\eps >0$, we may find $\delta_k > 0$ such that, for~$n$ sufficiently large,
\begin{equation}\label{fat}
\P \lp \sup_{|s-s'| \le \delta_k} d_{(n)}^\circ(s,s') \ge 2^{-k} \rp \le 2^{-k}\eps.
\end{equation}

By taking $\delta_k$ even smaller if necessary, we may assume that the inequality \eqref{fat} holds for all $n\ge 1$. Summing over $k\in \N$, we find that for every $n\ge 1$,
$$\P\lp d_{(n)} \in \K_\eps \rp \ge 1- \eps,$$
where
$$\K_\eps \de \bigg\{ f \in \C([0,1]^2,\R )\,:\, f(0,0)=0,\, \forall k \in \N,\, \sup_{|s-s'|\wedge |t-t'| \le \delta_k} \lm f(s,t)-f(s',t') \rm \le 2^{1-k} \bigg\}$$
is a compact set.
\end{pre}

\subsection{The genus $g$ Brownian map}

\subsubsection*{Proof of the first assertion of Theorem \ref{cvq}}

Thanks to Lemma \ref{tightd}, there exist a subsequence $(n_k)_{k\ge 0}$ and a function $d_\infty \in \C([0,1]^2,\R)$ such that
\begin{equation}\label{dinfty}
\lp d_{(n_k)}(s,t) \rp_{0 \le s,t \le 1} \tode{(d)}{k}{\infty} \lp d_\infty(s,t) \rp_{0 \le s,t \le 1}.
\end{equation}
By the Skorokhod theorem, we will assume that this convergence holds almost surely. As the $d_{(n)}$ functions, the function $d_\infty$ obeys the triangular inequality. And because $d_{(n)}(s,s) = O(n^{-1/4})$ for all $s\in [0,1]$, the function $d_\infty$ is actually a pseudo-metric. We define the equivalence relation associated with it by saying that $s \sim_\infty t$ if $d_\infty(s,t)=0$, and we call $\q_\infty \de [0,1]_{/\sim_\infty}$.

We will show the convergence claimed in Theorem \ref{cvq} along the same subsequence $(n_k)_{k\ge 0}$. Thanks to \eqref{gh}, we only need to see that
$$\dGH \lp \lp (2n_k)^{-1} \ent 0 {2n_k}_{/\sim_{n_k}},d_{(n_k)} \rp, \lp \q_\infty,d_\infty \rp \rp \tok 0.$$

For that matter, we will use the characterization of the Gromov-Hausdorff distance via correspondences. Recall that a correspondence between two metric spaces $(\S,\delta)$ and $(\S',\delta')$ is a subset $\rR\subseteq \S \times \S'$ such that for all $x\in \S$, there is at least one $x'\in \S'$ for which $(x,x')\in \rR$ and vice versa. The distortion of the correspondence $\rR$ is defined by
$$\dis(\rR) \de \sup \lb |\delta(x,y) - \delta(x',y')|\,:\, (x,x'),(y,y')\in \rR \rb.$$

Then we have \cite[Theorem 7.3.25]{burago01cmg}
$$\dgh(\S,\S') = \frac 12 \inf_{\rR} \dis(\rR)$$
where the infimum is taken over all correspondences between $\S$ and $\S'$.

\bigskip

We define the correspondence $\rR_n$ between $\lp (2n)^{-1} \ent 0 {2n}_{/\sim_n},d_{(n)} \rp$ and $\lp \q_\infty,d_\infty \rp$ as the set 
$$\rR_n \de \lb \lp (2n)^{-1}\, \pi_{n}(\lf 2n t \rf),\pi_\infty(t)\rp, \, t\in [0,1] \rb$$
where $\pi_n : \ent 0 {2n} \to \ent 0 {2n}_{/\sim_n}$ and $\pi_\infty : [0,1] \to \q_\infty$ are both canonical projections. Its distortion is
$$\dis(\rR_n) 
	    = \sup_{0 \le s,t \le 1} \Big| d_{(n)}\lp \frac{\lf 2n s \rf}{2n},\frac{\lf 2n t \rf}{2n}\rp - d_\infty(s,t) \Big|,$$
and, thanks to \eqref{dinfty},
$$\dGH \lp \lp (2n_k)^{-1} \ent 0 {2n_k}_{/\sim_{n_k}},d_{(n_k)} \rp, \lp \q_\infty,d_\infty \rp \rp \le \frac 12\, \dis \lp \rR_{n_k} \rp \tok 0.$$

\subsubsection*{A bound on $d_\infty$}

If we take the limit of the inequality \eqref{bound2} along the subsequence $(n_k)_{k\ge 0}$, we find $d_\infty(s,t) \le d^\circ_\infty(s,t)$. Because $d_\infty^\circ$ does not satisfy the triangular inequality, we may improve this bound by considering the largest metric on $\q_\infty$ that is smaller than $d_\infty^\circ$: for all $a$ and $b\in \q_\infty$, we have
$$d_\infty(a,b) \le d_\infty^*(a,b) \de \inf  \lb \sum_{i=0}^k d_\infty^\circ(s_i,t_i) \rb$$
where the infimum is taken over all integer $k\ge0$ and all sequences $s_0$, $t_0$, $s_1$, $t_1$,\dots, $s_k$, $t_k$ satisfying $a = \pi_\infty(s_0)$, for all $0 \le i \le k-1$, $t_i \sim_\infty s_{i+1}$, and $b= \pi_\infty(t_k)$.

\subsection{Hausdorff dimension of the genus $g$ Brownian map}

We now prove the second assertion of Theorem \ref{cvq}. We follow the method provided by Le Gall and Miermont \cite{legall09scaling}. As usual, we proceed in two steps.

\subsubsection*{Upper bound}

Let $0 < \alpha < \frac 14$. For every $\e\in\vec E(\s_\infty)$, Lemmas \ref{lemb} and \ref{tight}, together with \eqref{kolfn}, imply that $\L_\infty^\e$ is $\alpha$-H\"older. The same goes for $\L_\infty$ by finite concatenation. This yields that the canonical projection $\pi_\infty : ([0,1],|\cdot|) \to (\q_\infty,d_\infty)$ is $\alpha$-H\"older as well: for $0\le s,t \le 1$,
$$d_\infty(\pi_\infty(s),\pi_\infty (t)) = d_\infty(s,t) \le d^\circ_\infty(s,t) \le 2 \|\L_\infty\|_\alpha \, |s-t|^\alpha.$$
It follows that $\dH(\q_\infty,d_\infty) \le \frac 1 \alpha \dH([0,1])$. Taking the infimum over $\alpha \in (0,1/4)$, we have
$$\dH(\q_\infty,d_\infty) \le 4.$$

\subsubsection*{Lower bound}

We start with a lemma giving a lower bound on $d_\infty(s,t)$. Let us first define a contour function $\CC_n: [0, 2n] \to \R_+$ for the $g$-tree $\t_n$ by
$$\CC_n \de \big( C_n^{\e_1} - \sigma_n^{\e_1} \big) \bullet \big( C_n^{\e_1} - \sigma_n^{\e_2} \big) \bullet \dots \bullet \big( C_n^{\e_1} - \sigma_n^{\e_\k} \big) + \sum_{i=1}^{\k} \sigma_n^{\e_i}$$
where the half-edges $\e_1=\e_*$, \dots, $\e_\k$ are sorted according to the facial order of $\s_n$. This function is actually the contour function of the ``large'' forest consisting in the concatenation of $\f_n^{\e_1}$, $\f_n^{\e_2}$, \dots, $\f_n^{\e_\k}$. As usual, we define its rescaled version $\CC_{(n)}$, as well as its limit
$$\CC_{(n)} \ton \CC_\infty \de \big( C_\infty^{\e_1} - \sigma_\infty^{\e_1} \big) \bullet \big( C_\infty^{\e_1} - \sigma_\infty^{\e_2} \big) \bullet \dots \bullet \big( C_\infty^{\e_1} - \sigma_\infty^{\e_\k} \big) + \sum_{i=1}^{\k} \sigma_\infty^{\e_i}$$
where, this time, the half-edges are sorted according to the facial order of $\s_\infty$.

\bigskip

For $0 \le s, t \le 1$, we define the set
$$\lin_\infty(s,t) \de \lb s\wedge t \le x \le s\vee t\,:\, \underline \CC_\infty(x)= \underline \CC_\infty(s),\, \CC_\infty(x) = \inf_{[x\wedge s,\, x\vee s]} \CC_\infty \rb.$$
It will become clearer in a moment what this set represents, while looking at its discrete analog.

\begin{lem}\label{bounddinf}
The following bound holds
$$d_\infty(s,t) \ge \L_\infty(s) - \min_{\lin_\infty(s,t)} \L_\infty$$
\end{lem}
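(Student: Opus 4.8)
\emph{Strategy.} The plan is to establish the bound first at the discrete level, for the quadrangulation $\q_n$, up to an additive error bounded uniformly in $n$, and then to pass to the limit along the subsequence $(n_k)$ fixed above. All the ingredients needed on the limit side are already available almost surely: $\CC_{(n)}\to\CC_\infty$ and $\L_{(n)}\to\L_\infty$ in $(\K,d_\K)$, $d_{(n_k)}\to d_\infty$ uniformly on $[0,1]^2$, and $\s_n=\s_\infty$ for $n$ large.

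\emph{The discrete bound.} For $0\le i,j\le 2n$ put
$$\lin_n(i,j)\de\lb k\in\ent{i\wedge j}{i\vee j}\,:\,\ll{\CC_n}(k)=\ll{\CC_n}(i),\ \CC_n(k)=\min_{\ent{k\wedge i}{k\vee i}}\CC_n\rb ,$$
so that $\{\rr\t_n(k):k\in\lin_n(i,j)\}$ is the ancestral chain joining $\rr\t_n(i)$ to its first common ancestor with $\rr\t_n(j)$ inside the tree of the big forest that contains $\rr\t_n(i)$. I claim there is a constant $c$, independent of $n$, $i$, $j$, with
$$d_n(i,j)\ \ge\ \L_n(i)-\min_{k\in\lin_n(i,j)}\L_n(k)-c ;$$
since $\L_n(i)-\L_n(k)=\l_n(\rr\t_n(i))-\l_n(\rr\t_n(k))$ is insensitive to the origin of the labels, this is a statement about labels alone. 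To prove it one fixes a path $\rr\q_n(i)=x_0,x_1,\dots,x_L=\rr\q_n(j)$ in $\q_n$ with $L=d_n(i,j)$ and tracks the arcs produced by the Chapuy--Marcus--Schaeffer bijection, as is done in \cite{legall07tss,miermont09trm}: the union of the subtrees hanging off the ancestral chain on the side of $\rr\t_n(i)$ occupies a sub-interval of contour times not containing~$j$, and an arc of $\q_n$ leaving the corresponding block of vertices necessarily reaches a vertex whose label is at most $\min_{k\in\lin_n(i,j)}\l_n(\rr\t_n(k))$ plus a bounded quantity. Hence some $x_m$ has label at most that value, and since two adjacent vertices of the bipartite quadrangulation $\q_n$ carry labels (i.e.\ distances to $v_\bullet$) differing by exactly $1$, the number $L$ of steps of the path is at least $\l_n(\rr\t_n(i))-\min_{k\in\lin_n(i,j)}\l_n(\rr\t_n(k))$ up to a bounded error. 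What makes the planar argument of \cite{legall07tss} go through unchanged is that $\lin_n(i,j)$ is contained in a single \emph{planar} tree of the decomposition of Section~\ref{decomp}.

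\emph{Passage to the limit.} Fix $s,t\in[0,1]$, set $i_k\de\lf 2n_k s\rf$, $j_k\de\lf 2n_k t\rf$, apply the discrete bound to $(n_k,i_k,j_k)$, divide by $\g$, and invoke Lemma~\ref{tightd} to replace $d_{(n_k)}$ at the rescaled integers by $d_{(n_k)}(s,t)$ up to $o(1)$; this gives
$$d_{(n_k)}(s,t)\ \ge\ \L_{(n_k)}(s)-\frac1\g\min_{k'\in\lin_{n_k}(i_k,j_k)}\L_{n_k}(k')-o(1) .$$
The conditions defining $\lin_n$ are closed conditions bearing on $\CC_n$ and its running infima, hence stable under the uniform convergence $\CC_{(n)}\to\CC_\infty$; consequently every limit point of $k'_k/(2n_k)$, where $k'_k$ realises the discrete minimum, lies in $\lin_\infty(s,t)$, and conversely any $x^\ast\in\lin_\infty(s,t)$ is approximated by the last time $\rho_k\in\ent{i_k}{\lf 2n_k x^\ast\rf}$ where $\CC_{n_k}$ hits its minimum over that range, which belongs to $\lin_{n_k}(i_k,j_k)$ and whose limit $x^{\ast\ast}\in\lin_\infty(s,t)$ satisfies $\CC_\infty(x^{\ast\ast})=\CC_\infty(x^\ast)$; since $\CC_\infty$ is strictly monotone along the ancestral line of $s$, the points $x^\ast$ and $x^{\ast\ast}$ code the same ancestor, and as $\L_\infty$ is constant along the fibres of $\CC_\infty$, $\L_\infty(x^{\ast\ast})=\L_\infty(x^\ast)$. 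Together these yield $\tfrac1\g\min_{k'\in\lin_{n_k}(i_k,j_k)}\L_{n_k}(k')\to\min_{\lin_\infty(s,t)}\L_\infty$; taking $\liminf_k$ in the displayed inequality and using $\L_{(n_k)}(s)\to\L_\infty(s)$ produces $d_\infty(s,t)\ge\L_\infty(s)-\min_{\lin_\infty(s,t)}\L_\infty$. The degenerate configurations ($s\sim_\infty t$, or $\rho_k$ slipping into a neighbouring tree of the forest) are disposed of using the a.s.\ regularity of the first-passage Brownian bridges building up $\CC_\infty$.

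\emph{Main obstacle.} The genuinely delicate step is the discrete inequality: forcing a path of $\q_n$ between $\rr\q_n(i)$ and $\rr\q_n(j)$ to visit a vertex of sufficiently small label rests on the Jordan-curve / non-crossing bookkeeping for the Chapuy--Marcus--Schaeffer arcs around the subtree below $\lin_n(i,j)$. Once this is available, the limit is a routine compactness argument, the only real care being the two-sided comparison between $\min_{\lin_{n_k}}\L_{n_k}$ and $\min_{\lin_\infty}\L_\infty$ carried out through the ancestral-line structure.
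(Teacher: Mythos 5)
Your overall architecture (a discrete inequality for $d_n$ followed by passage to the limit) is the same as the paper's, and the limit step is indeed routine. The genuine gap sits exactly where you locate your ``main obstacle'': the discrete inequality is asserted rather than proved, and the justification you offer would not survive scrutiny in positive genus. You propose to transplant the planar argument, in which the ancestral line of $\rr\t_n(i)$ together with the non-crossing arcs forces any path toward the $j$-side to pass through a low-labelled vertex; but the reason you give for this transferring---that $\lin_n(i,j)$ is contained in a single planar tree---addresses the wrong object. The lineage does sit inside one tree of the big forest, yet the arcs of $\q_n$ are drawn on the genus-$g$ surface and can run through the handles, so the Jordan-curve separation underlying the planar bookkeeping is simply unavailable; this is precisely the point where a new argument is required, and your sketch does not supply one.

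The paper's proof of the discrete bound avoids topology altogether and is worth internalizing. Let $k\in\lin_n(i,j)$ realize $\min_{\lin_n(i,j)}\L_n$. Deleting the edges of $\t_n$ incident to the vertex $\rr\t_n(k)$ disconnects $\t_n$ into components, one containing $\rr\t_n(i)$ and another containing $\rr\t_n(j)$; a geodesic $\gamma_0,\dots,\gamma_{d_n(i,j)}$ of $\q_n$ must therefore contain a step $\gamma_r\to\gamma_{r+1}$ joining two distinct components. Since the facial sequence of $\t_n$ then visits $\rr\t_n(k)$ between any visit to $\gamma_r$ and any visit to $\gamma_{r+1}$, and since arcs join corners to their successors (the first later corner with label one less), one gets $\l_n(\gamma_r)\vee\l_n(\gamma_{r+1})\le\l_n(\rr\t_n(k))$; the bound follows from $d_n(i,j)\ge d_{\q_n}(\rr\q_n(i),v_n)-d_{\q_n}(v_n,\gamma_r)=\l_n(\rr\t_n(i))-\l_n(\gamma_r)$, exactly and with no additive constant. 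I would also flag two loose statements in your limiting step---``$\CC_\infty$ is strictly monotone along the ancestral line of $s$'' and ``$\L_\infty$ is constant along the fibres of $\CC_\infty$''---which should instead be phrased in terms of the equivalence classes of the forest coded by $\CC_\infty$; but since only the one-sided approximation of points of $\lin_\infty(s,t)$ by points of $\lin_{n_k}(i_k,j_k)$ is actually needed, that part of your argument is repairable.
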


\begin{proof}
This inequality follows easily by approximation, once we have shown its discrete analog:
\begin{equation}\label{lbound}
d_n(i,j) \ge \L_n(i) - \min_{\lin_n(i,j)} \L_n
\end{equation}
where the set
$$\lin_n(i,j) \de \lb i\wedge j \le k \le i\vee j\,:\, \underline \CC_n(k)= \underline \CC_n(i),\, \CC_n(k) = \inf_{[k\wedge i,\, k\vee i]} \CC_n \rb$$
represents the ancestral lineage of $\rr\t_n(i)$ between $i$ and $j$. An integer $k$ belongs to $\lin_n(i,j)$ \iff $k$ is between $i$ and $j$ (first constraint), $\rr\t_n(k)$ lies in the same subtree as $\rr\t_n(i)$ (second constraint), and $\rr\t_n(k)$ is an ancestor of $\rr\t_n(i)$ (third constraint). Beware that $\lin_n(j,i)$ is in general a totally different set.

We can suppose $i \neq j$. In order to show \eqref{lbound}, we consider a geodesic path $\gamma_0$, $\gamma_1$, \dots, $\gamma_{d_n(i,j)}$ from $\rr\t_n(i)$ to $\rr\t_n(j)$ and call $k\in \lin_n(i,j)$ an integer for which $\L_n(k) = \min_{\lin_n(i,j)} \L_n$. Let us call~$p$ the order of the vertex $\rr\t_n(k)$. Then removing the edges incident to $\rr\t_n(k)$ breaks $\t_n$ into $p+1$ connected components: $\{\rr\t_n(k)\}$, $p-1$ trees, and a $(p+1)$-th component (which is a $g$-tree, unless if $\rr\t_n(k)$ belongs to the floor of a forest). One of these components contains $\rr\t_n(i)$ and another one contains $\rr\t_n(j)$. Say that $\gamma_r$, $r < d_n(i,j)$ is the last vertex of the geodesic path lying in the same component as $\rr\t_n(i)$. Then $\gamma_r$ is linked by an edge of $\q_n$ to $\gamma_{r+1}$, which lies in another component. Moreover, the facial sequence of $\t_n$ must visit $\rr\t_n(k)$ between any time it visits $\gamma_r$ and any time it visits $\gamma_{r+1}$ (in that order or the other). The way we construct edges in the Chapuy-Marcus-Schaeffer bijection thus imposes $\l_n(\rr\t_n(k)) \ge \l_n(\gamma_r) \vee \l_n(\gamma_{r+1})$. Finally,
$$d_n(i,j) \ge d_{\q_n}(\rr\q_n(i),\gamma_r) \ge d_{\q_n}(\rr\q_n(i),v_n) - d_{\q_n}(v_n,\gamma_r) = \l_n(\rr\t_n(i)) - \l_n(\gamma_r),$$
and the same holds with $r+1$ instead of $r$, yielding
\qed{11}{10}$$d_n(i,j) \ge \l_n(\rr\t_n(i)) - \l_n(\rr\t_n(k)) = \L_n(i) - \min_{\lin_n(i,j)} \L_n.$$
\end{proof}

Let us define the measure $\lambda$ on $\q_\infty$ as the image of the Lebesgue measure on $[0,1]$ by the canonical projection $\pi_\infty : [0,1] \to \q_\infty$. From now on, we work conditionally given the parameters vector $\mathfrak I_\infty$. Let $0 \le s \le 1$ be a point that is not of the form $\sum_{i=1}^k m_\infty^{\e_i}$ for some $k=0$, \dots, $\k$. This means that it is not $0$, $1$, or a point at which two functions are being concatenated. Such points will thereafter be called \textit{junction points}.

Suppose that for some $\delta > 0$, we can find two positive numbers $r_-$ and $r_+$ such that
\begin{equation}\label{rplusmoins}
\L_\infty(s) - \min_{\lin_\infty(s,s-r_-)} \L_\infty > \delta \sand \L_\infty(s) - \min_{\lin_\infty(s,s+r_+)} \L_\infty > \delta.
\end{equation}
For $a \in \q_\infty$ and $r>0$, we call $B_\infty(a,r)$ the open ball centered at $a$ with radius $r$ for the metric $d_\infty$. Using Lemma \ref{bounddinf} and the elementary fact that $\lin_\infty(s,t) \subseteq \lin_\infty(s,t')$ as soon as $|t-s| \le |t'-s|$, we find that $B_\infty(\pi_\infty(s),\delta) \subseteq \pi_\infty\big((s-r_-, s+ r_+)\big)$. As a result, we would have $\lambda(B_\infty(\pi_\infty(s),\delta)) \le r_- +r_+$.

\bigskip

For all $0 \le x \le \CC_\infty(s)-\underline \CC_\infty(s)$, we define
$$\tau_x \de \inf\lb r \ge s,\, \CC_\infty(r)=\CC_\infty(s)-x \rb$$
and we see that $\lin_\infty(s,\tau_x) = \{ \tau_y,\,0\le y \le x\}$. The discussion preceding Section \ref{snhe} shows that the process
$$\Big(\L_\infty(\tau_x) - \L_\infty(s)\Big)_{0 \le x \le \CC_\infty(s)-\underline \CC_\infty(s)}$$
has the law of a real Brownian motion started from $0$. Let $\eta >0$. Almost surely, provided that $\CC_\infty(s)-\underline \CC_\infty(s) >0$, the law of the iterated logarithm ensures us that for $x$ small enough,
$$\inf_{0\le y \le x} (\L_\infty(\tau_y) - \L_\infty(s)) < - x^{\frac 12 + \eta},$$
so that
$$\L_\infty(s) - \min_{\lin_\infty(s,\tau_x)} \L_\infty = \L_\infty(s) - \inf_{0\le y\le x} \L_\infty(\tau_y) > x^{\frac 12 + \eta}.$$

We choose $\delta = x^{\frac 12 + \eta}$ and $r_+ = \tau_x - s$ so that the second part of \eqref{rplusmoins} holds. Moreover, because $s$ is not a junction point, on one of its neighborhoods, the function $\CC_\infty$ is a first-passage Brownian bridge, and is then absolutely continuous with respect to the Wiener measure on this neighborhood. It therefore obeys the law of the iterated logarithm as well. So, a.s., for~$r$ small enough,
$$\inf_{0\le t \le r} (\CC_\infty(s+t) - \CC_\infty(s)) < - r^{\frac 12 + \eta}.$$
It follows that $r_+ \le x^{(\frac 12 + \eta)^{-1}} = \delta^{(\frac 12 + \eta)^{-2}} = \delta^{4-\eta'}$ for some $\eta' > 0$. In a similar way, we can find an~$r_- < \delta^{4-\eta'}$ satisfying the first part of \eqref{rplusmoins}. This yields, for all $\delta >0$ small enough,
$$\lambda(B_\infty(\pi_\infty(s),\delta)) \le 2 \delta^{4-\eta'},$$
which implies that, for all $\eta'> 0$,
\begin{equation}\label{densthm}
\limsup_{\delta \to 0} \frac{\lambda(B_\infty(\pi_\infty(s),\delta))}{\delta^{4-\eta'}} \le 2.
\end{equation}

Once again, because $\CC_\infty$ is absolutely continuous with respect to the Wiener measure on a neighborhood of $s$, a.s. $\CC_\infty(s)-\underline \CC_\infty(s) >0$. For the record, note that if $s$ was a junction point, we would always have $\CC_\infty(s)=\underline \CC_\infty(s)$ by definition of a first-passage bridge. We obtain that for every $s$ that is not a junction point, \eqref{densthm} holds almost surely. Finally, as there are only $\k + 1$ junction points, Fubini-Tonelli's theorem shows that a.s., for $\lambda$-almost every~$a$,
$$\limsup_{\delta \to 0} \frac{\lambda(B_\infty(a,\delta))}{\delta^{4-\eta'}} \le 2.$$
We then conclude that $\dH(\q_\infty,d_\infty) \ge 4 - \eta'$ for all $\eta' >0$ by standard density theorems for Hausdorff measures (\cite[Theorem 2.10.19]{federer69gmt}).


\section{An expression of the constant $t_g$}

This section is dedicated to the proof of Theorem \ref{tgprop}. Recall that the constant $t_g$ is defined by: $|\Q_n|\sim t_g\, n^{\frac 52 (g-1)}\,12^n$. The relation \eqref{qt} gives that $|\T_n|\sim \frac12\, t_g\, n^{\frac {5g-3}2}\,12^n$, so that, thanks to \eqref{tg2},
$$t_g = 2^{\frac{3g+1}2}\, 3^g \ \Upsilon$$
where $\Upsilon$ was defined by \eqref{upsi}. For a given $\s\in\Sg^*$, we will concentrate on
\begin{equation}\label{intpart}
\int_{\mathcal{S}^\s} d\mathcal{L}^\s \ \1{\lb m^{\e_*}\ge 0,\,u< m^{\e_*}\rb}\ 
		\prod_{\e\in{\vec E}(\s)} - \ga'_{m^\e} \lp \sigma^\e \rp
		\prod_{\e\in\ori{E}(\s)} \ga_{\sigma^\e} \lp l^\e \rp.
\end{equation}

First, notice that by integrating with respect to $u$, only a factor $m^{\e_*}$ appears.

\subsection{Integrating with respect to $(m^\e)_{\e\in \vec E(\s)\bs \{\e_*\}}$}

For $\e \neq \e_*$, $m^\e$ is only present in the factor
\begin{equation}\label{factor}
m^{\e_*} \lp -\ga'_{m^{\e_*}}(\sigma^{\e_*})\rp \lp -\ga'_{m^\e}(\sigma^\e) \rp =
	\sigma^{\e_*} \, \ga_{m^{\e_*}}(\sigma^{\e_*}) \lp -\ga'_{m^\e}(\sigma^\e) \rp,
\end{equation}
so we have to compute an integral of the form given in the following lemma:

\begin{lem}\label{lemag}
Let $a$, $b$, and $t$ be three positive numbers. Then
$$\int_0^t \ga_{t-m}(a) \lp - \ga'_m(b) \rp dm= \ga_t(a+b).$$
\end{lem}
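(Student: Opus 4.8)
The identity $\int_0^t \ga_{t-m}(a)\,\big(-\ga'_m(b)\big)\,dm = \ga_t(a+b)$ is a statement about Gaussian densities, and the cleanest route is to recognize the left-hand side as a convolution-type integral and evaluate it either by a direct change of variables or by a probabilistic interpretation. I would first record the explicit formulas $\ga_a(x) = \frac{1}{\sqrt{2\pi a}}\,e^{-x^2/(2a)}$ and $-\ga'_m(b) = \frac{b}{m^{3/2}}\,\ga(b/\sqrt m) = \frac{b}{\sqrt{2\pi}\,m^{3/2}}\,e^{-b^2/(2m)}$, so the integrand becomes
$$
\frac{b}{2\pi}\,\frac{1}{\sqrt{t-m}\;m^{3/2}}\,\exp\!\Big(-\frac{a^2}{2(t-m)}-\frac{b^2}{2m}\Big).
$$
The plan is to show this integrates in $m$ over $(0,t)$ to $\frac{1}{\sqrt{2\pi t}}\,e^{-(a+b)^2/(2t)}$.

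The most transparent approach is probabilistic. Interpret $-\ga'_m(b)\,dm$ as the density of the first-passage time of a standard Brownian motion $(\beta_s)_{s\ge0}$ to level $b>0$: indeed, if $T_b = \inf\{s: \beta_s = b\}$, then $T_b$ has density $f_{T_b}(m) = \frac{b}{\sqrt{2\pi}\,m^{3/2}}\,e^{-b^2/(2m)} = -\ga'_m(b)$ on $(0,\infty)$. Meanwhile $\ga_{t-m}(a) = \P(\beta_{t}-\beta_m \in a + dx)/dx$ evaluated at $0$, i.e. the transition density of Brownian motion over time $t-m$ from $a$ to... more precisely $\ga_{t-m}(a)$ is the density at $0$ of $\beta_{t-m}$ shifted by $a$. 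So $\int_0^t \ga_{t-m}(a)f_{T_b}(m)\,dm$ is, by the strong Markov property at $T_b$, the density at $a+b$ of $\beta_t$ restricted to the event $\{T_b < t\}$ — but since $b>0$ and we land at $a+b>b$, by continuity $\{\beta_t = a+b, \text{path passes through } b\}$ already forces $T_b<t$ a.s. on that event; hence the integral equals the full density of $\beta_t$ at $a+b$, which is exactly $\ga_t(a+b)$. Making this rigorous amounts to writing $\ga_t(a+b) = \int_0^\infty \P(T_b \in dm)\,\P(\beta_t - \beta_{T_b} \in a+b - b \mid T_b = m)$ via conditioning, noting the integrand vanishes for $m>t$ because a Gaussian transition density over negative time is meaningless (equivalently, truncating at $t$ changes nothing since $\ga_{t-m}$ only makes sense for $m<t$ and the first-passage-plus-Gaussian decomposition of $\beta_t$ never uses $m>t$).

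If one prefers a purely analytic proof, I would instead complete the square in the exponent. Write $\frac{a^2}{2(t-m)} + \frac{b^2}{2m}$ and, after the substitution $m = t\,u$ with $u\in(0,1)$, manipulate the exponent into the form $\frac{(a+b)^2}{2t} + (\text{a perfect square in a new variable})$; the remaining integral is then a standard Gaussian integral equal to $1$ after the appropriate normalization, leaving precisely $\frac{1}{\sqrt{2\pi t}}e^{-(a+b)^2/(2t)}$. The identity $\frac{a^2}{1-u} + \frac{b^2}{u} = (a+b)^2 + \frac{(au - b(1-u))^2}{u(1-u)}$ is the key algebraic fact, and then the substitution $v = \frac{au-b(1-u)}{\sqrt{u(1-u)}}$ (whose Jacobian conveniently matches the leftover $u^{-3/2}(1-u)^{-1/2}$ factors up to constants) reduces everything to $\int_{-\infty}^{\infty} e^{-v^2/(2t)}\,dv = \sqrt{2\pi t}$. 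The main obstacle is purely bookkeeping: checking that the Jacobian of the substitution $v(u)$ exactly absorbs the prefactor $\frac{b}{2\pi\sqrt t}\,u^{-3/2}(1-u)^{-1/2}$ and that $v$ runs over all of $\mathbb R$ as $u$ runs over $(0,1)$ — both are routine but must be done carefully to get the constants right. I would go with the probabilistic argument as the primary proof since it is shorter and conceptually explains why the answer is $\ga_t(a+b)$, and mention the analytic computation as an alternative.
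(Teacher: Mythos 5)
Your probabilistic argument is correct and genuinely different from the paper's proof. You identify $-\ga'_m(b)$ as the density of the first-passage time $T_b=\inf\{s:\beta_s=b\}$ of a standard Brownian motion, and then the renewal decomposition
$$\ga_t(a+b)\,dy=\P(\beta_t\in dy)\big|_{y=a+b}=\int_0^t \P(T_b\in dm)\,\ga_{t-m}(a)\,dy,$$
valid because a continuous path reaching $a+b\ge b$ at time $t$ must have crossed level $b$ strictly before (the strong Markov property then factorizes the law), is precisely the stated identity. The paper instead proves the lemma by a purely analytic argument: it sets $f_t(a,b)$ to be the left-hand side, substitutes $m\mapsto m/(t-m)$ to bring the integral to the form $\frac{b}{2\pi t}\int_0^\infty x^{-3/2}e^{-\frac{1}{2t}(a^2(1+x)+b^2(1+1/x))}\,dx$, deduces the symmetry $f_t(a,b)=f_t(b,a)$ from a further scaling of $x$, then differentiates in $a$ and (via $x\mapsto 1/x$ to fold one of the resulting terms back onto $f_t$) derives the ODE $\partial_a f_t(a,b)=-\frac{a+b}{t}f_t(a,b)$; solving and pinning the constant using $f_t(0,1)$ gives the result. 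Your analytic alternative by completing the square, using the correct algebraic identity $\frac{a^2}{1-u}+\frac{b^2}{u}=(a+b)^2+\frac{(au-b(1-u))^2}{u(1-u)}$, is also sound, though as you say it requires careful Jacobian bookkeeping (one should check that $v(u)=\frac{au-b(1-u)}{\sqrt{u(1-u)}}$ is a monotone bijection of $(0,1)$ onto $\R$, which it is). The probabilistic route is the shortest and explains conceptually why the convolution of a Gaussian kernel with a hitting-time density reproduces a Gaussian at the shifted level; the paper's ODE argument is longer but entirely self-contained, requiring no input from the theory of first-passage times or the strong Markov property.
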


\begin{pre}
Let us call $f_t(a,b)$ the integral we have to compute, that is
$$f_t(a,b) = \frac b {2\pi} \int_0^t (t-m)^{-\frac12} \, m^{-\frac32} \, e^{-\frac12 \lp \frac{a^2}{t-m} +\frac{b^2}m \rp} dm.$$
By doing the change of variable $m \mapsto \frac m {t-m}$, we find
$$f_t(a,b) = \frac b {2\pi t} \int_0^\infty x^{-\frac32} \, e^{-\frac1{2t} \lp a^2(1+ x) + b^2\lp 1+ \frac{1}x\rp \rp}  dx$$

The change of variable $x \mapsto \frac{a^2}{b^2} \, x$ in this integral yields the identity
\begin{equation}\label{abba}
f_t(a,b) = f_t(b,a).
\end{equation}

When differentiating with respect to $a$, a factor $-\frac a t (1+x)$ appears inside the integral. We may split it into two terms, the first one being merely $-\frac a t f_t(a,b)$ and the second one being
$$-\frac b t \frac a {2\pi t} \int_0^\infty x^{-\frac12} \, e^{-\frac1{2t} \lp a^2(1+ x) + b^2\lp 1+ \frac{1}x\rp \rp}  dx=-\frac b t f_t(b,a) = -\frac b t f_t(a,b),$$
thanks to the change of variable $x\mapsto \frac 1 x$. All in all, we obtain
$$\partial_a f_t(a,b) = - \frac {a+b} t f_t(a,b),$$
so that there exists a function $g_t$ satisfying
$$f_t(a,b) = e^{-\frac1{2t} \lp a + b \rp^2}\, g_t(b).$$

Because of \eqref{abba}, the function $g_t$ is actually constant and
$$g_t(b)=e^{\frac{1}{2t}}\,f_t(0,1) = \frac 1 {2\pi t} \int_0^\infty x^{-\frac32} \, e^{-\frac1{2tx}}  dx
	 = \frac 1 {2\pi t} \int_0^\infty e^{-\frac1{2t} y^2} dy = \frac 1 {\sqrt{2\pi t}}.$$
	 
Putting all this together, we obtain the result.
\end{pre}

The first time we integrate with respect to an $m^\e$, for an $\e \neq \e_*$, we apply Lemma \ref{lemag} with $a=\sigma^{\e_*}$, $b=\sigma^\e$ and $t=m^{\e_*} + m^\e$ ($t$ does not depend on $m^\e$) and the factor \eqref{factor} is changed into
$$\sigma^{\e_*} \, \ga_{m^{\e_*}+ m^\e}(\sigma^{\e_*}+ \sigma^\e).$$

We may then apply Lemma \ref{lemag} again, with $a=\sigma^{\e_*} + \sigma^{\e}$, $b=\sigma^{\e'}$ and $t=m^{\e_*} + m^\e + m^{\e'}$ when integrating with respect to $m_{\e'}$ and so on. In the end, after integrating with respect to $u$ and $(m^\e)_{\e \neq \e_*}$, the
$$\1{\lb m^{\e_*}\ge 0,\,u< m^{\e_*}\rb}\ \prod_{\e\in{\vec E}(\s)} - \ga'_{m^\e} \lp \sigma^\e \rp$$
part in the integrand of \eqref{intpart} merely becomes
$$\sigma^{\e_*} \, \ga_1 \lp \textstyle\sum_{\e\in\vec E(\s)} \sigma^{\e}\rp = 
	\sigma^{\e_*} \, \ga \lp 2\textstyle\sum_{\e\in\ori E(\s)} \sigma^{\e}\rp =
	\frac{\sigma^{\e_*}}{\sqrt{2\pi}}\, e^{- 2\lp \sum_{\e\in\ori{E}(\s)}\sigma^{\e} \rp^2}.$$

\subsection{Integrating with respect to $(\sigma^\e)_{\e\in \ori E(\s)}$}

We call $s=\sum_{\e\in\ori E(\s)} \sigma^\e$. In order to integrate with respect to $(\sigma^\e)_{\e\in \ori E(\s)}$, we will integrate with respect to $s$ and with respect to $(\sigma^\e)_{\e\in \ori E(\s)}$ on the simplex, precisely,
$$d(\sigma^\e)_{\e\in \ori E(\s)}=ds\, \1{\lb\sigma^{\e_*} >0\rb} \,d(\sigma^\e)_{\e\in \ori E(\s)\bs\{\e_*\}},$$
where $\sigma^{\e_*}=s-\sum_{\e\in\ori E(\s)\bs\{\e_*\}} \sigma^\e$.

We then do the changes of variables $\sigma^\e \mapsto s\, \sigma^\e$ and $l^v \mapsto \sqrt{s} \,l^v$ for all $\e\neq\e_*$ and $v\neq \e_*^-$, so that $\sigma^{\e_*}$ becomes $1-\sum_{\e\in\ori E(\s)\bs\{\e*\}} \sigma^\e$ and the integral \eqref{intpart} becomes
$$\int d(l^v) \, \frac 1 {\sqrt{2\pi}} \int_0^\infty ds\, s^{5g-3} \, e^{-2s^2}
		\int d(\sigma^\e)_{\e\neq\e_*}  \1{\lb\sigma^{\e_*} >0\rb} \ \sigma^{\e_*} \prod_{\e\in\ori{E}(\s)} \ga_{\sigma^\e} \lp l^\e \rp.$$

The first part is easily enough dealt with,
$$\int_0^\infty ds\, s^{5g-3} \, e^{-2s^2} = 2^{-\frac{5g}2}\, \Gamma \lp \frac{5g}2 -1 \rp.$$
We then focus on
$$\int d(\sigma^\e)_{\e\neq\e_*}  \1{\lb\sigma^{\e_*} >0\rb} \ \sigma^{\e_*} \prod_{\e\in\ori{E}(\s)} \ga_{\sigma^\e} \lp l^\e \rp = \varphi\lp (|l^\e|)_{\e\in \ori{E}(\s)} \rp,$$
where the function $\varphi$ is defined, for $x^\e >0$, $\e \in \ori{E}(\s)$ by
$$\varphi\lp (x^\e)_{\e\in \ori{E}(\s)} \rp \de \int d(\sigma^\e)_{\e\neq\e_*}  \1{\lb\sigma^{\e_*} >0\rb} \ \sigma^{\e_*} \prod_{\e\in\ori{E}(\s)} \ga_{\sigma^\e} \lp x^\e \rp.$$

If we differentiate this function $\varphi$ with respect to every variables $x^\e$, we recognize the same integral we treated while integrating with respect to $(m^\e)$,
\begin{align*}
\prod_{\e\in \ori{E}(\s)}\hspace{-1mm} \lp -\partial_{x^\e}\rp \, \varphi\lp (x^\e)_{\e\in \ori{E}(\s)} \rp 
							& = \int d(\sigma^\e)_{\e\neq\e_*}  \1{\lb\sigma^{\e_*} >0\rb} \ \sigma^{\e_*} \prod_{\e\in\ori{E}(\s)} \lp -\ga'_{\sigma^\e} \lp x^\e \rp \rp\\
							& = x^{\e_*}\, \ga_1 \lp \textstyle\sum_{\e\in \ori{E}(\s)} x^\e \rp.
\end{align*}
Integrating back, we obtain
$$\varphi\lp (x^\e)_{\e\in \ori{E}(\s)} \rp = \ga^{[6g-1]}\lp \textstyle\sum_{\e\in \ori{E}(\s)} x^\e \rp + x^{\e_*}\, \ga^{[6g-2]}\lp \textstyle\sum_{\e\in \ori{E}(\s)} x^\e \rp,$$
where, for all $n\ge 1$, the functions $\ga^{[n]}$ are defined by
\begin{equation}\label{pn}
\ga^{[n]}(y) \de \int_y^\infty \hspace{-3mm} dy_{n-1} \int_{y_{n-1}}^\infty \hspace{-3mm} dy_{n-2} \dots \int_{y_2}^\infty \hspace{-3mm} dy_1 \ \ga_1(y_1).
\end{equation}

The integral \eqref{intpart} is now equal to some constant times
\begin{equation}\label{intdl}
\int d(l^v)_{v\neq \e_*^-} \ \ga^{[6g-1]}\lp \textstyle\sum_{\e\in \ori{E}(\s)} |l^\e| \rp + |l^{\e_*}|\, \ga^{[6g-2]}\lp \textstyle\sum_{\e\in \ori{E}(\s)} |l^\e| \rp.
\end{equation}

\subsection{Integrating with respect to $(l^v)_{v\in V(\s)\bs\{\e_*^-\}}$}

We follow here the ideas of \cite{chapuy07brm}. The term $\sum_{\e\in \ori{E}(\s)} |l^\e|$ is a linear combination of $l^v$'s. We will break the integral \eqref{intdl} into parts on which these coefficients are constant. This happens when the vertex labels are sorted according to a given ordering: we call $\O_\s$ the set of bijections from $\ent 0 {4g-3}$ into $V(\s)$.

Let $\lambda\in \O_\s$ be an ordering and $v\in V(\s)$. Because $\s$ is dominant, $v$ is connected to exactly three other vertices---not necessarily distinct---that we call $v'\!$, $v''\!$, and $v'''\!$. When the labels are sorted according to $\lambda$, that is when $l^{\lambda_0} < l^{\lambda_1} < \dots < l^{\lambda_{4g-3}}$, the coefficient of $l^v$ in the sum $\sum_{\e\in \ori{E}(\s)} |l^\e|$ is
$$c(\lambda,v) \de 2\lp \1{\lb\lambda^{-1}_{v'} <\, \lambda^{-1}_{v}\rb} + \1{\lb\lambda^{-1}_{v''} <\, \lambda^{-1}_{v}\rb} + \1{\lb\lambda^{-1}_{v'''} <\, \lambda^{-1}_{v}\rb} \rp -3.$$

For $0\le k \le 4g-3$, we let
$$d(\lambda, k) \de \sum_{i=k}^{4g-3} c(\lambda, \lambda_i).$$

Let $\e\in \ori E(\s)$ be a half-edge and $i$ (resp. $j$) be the smaller (resp. larger) of $\lambda^{-1}_{\e^-}$ and $\lambda^{-1}_{\e^+}$. Then $|l^\e| = l^{\lambda_j} - l^{\lambda_i}$ and $\e$ will contribute to the sum by a factor $+1$ for $l^{\lambda_j}$ and $-1$ for $l^{\lambda_i}$. So $\e$ will contribute to $d(\lambda,k)$ by a factor $+1$ for $k \le j$ plus a factor $-1$ for $k\le i$. Thus the definition we just gave for $d(\lambda,k)$ is consistent with \eqref{dlk}. This, by the way, also prove that $d(\lambda,k)>0$ for $k\neq0$.

\bigskip

We have
$$\sum_{\e\in \ori{E}(\s)} |l^\e| = \sum_{v\in V(\s)} c(\lambda,v)\, l^v = \sum_{i=1}^{4g-3} d(\lambda,i) \lp l^{\lambda_i}-l^{\lambda_{i-1}}\rp.$$
Let us call $k=\lambda^{-1}_{\e_*^-}$. We will write $\1{\lambda} \de \1{\{ l^{\lambda_0} < l^{\lambda_1} < \dots < l^{\lambda_{4g-3}}\}}$ for short. We integrate
$$\1{\lambda}\ \ga^{[6g-1]}\lp \sum_{i=1}^{4g-3} d(\lambda,i) \lp l^{\lambda_i}-l^{\lambda_{i-1}}\rp \rp$$
with respect to $l^{\lambda_{4g-3}}\!$, then $l^{\lambda_{4g-4}}\!$, and so on up to $l^{\lambda_{k+1}}\!$. We then integrate with respect to $l^{\lambda_0}\!$, $l^{\lambda_1}\!$, \dots, $l^{\lambda_{k-1}}\!$. By doing so, factors $\lp d(\lambda,4g-3) \rp^{-1}\!$, $\lp d(\lambda,4g-4) \rp^{-1}\!$, \dots, $\lp d(\lambda,k+1) \rp^{-1}$ then $\lp d(\lambda,1) \rp^{-1}\!$, $\lp d(\lambda,2) \rp^{-1}\!$, \dots, $\lp d(\lambda,k) \rp^{-1}$ successively appear and every time we integrate, $\ga^{[n]}$ is changed into $\ga^{[n+1]}$. All in all,
$$\int d(l^v)_{v\neq \e_*^-} \ \1{\lambda}\  \ga^{[6g-1]}\lp \textstyle\sum_{\e\in \ori{E}(\s)} |l^\e| \rp 
		= \ga^{[10g-4]}(0) \prod_{i=1}^{4g-3} \frac 1 {d(\lambda,i)}.$$

\medskip

The second part of \eqref{intdl} is a little bit trickier because it distinguishes the root from the other vertices. In order to circumvent this, we will consider the sum over all scheme with the same ``unrooted'' structure (we do not consider an ordering $\lambda$ at this time). For any scheme $s\in\Sg$, we note $\unrs$ the non-rooted scheme corresponding to $\s$, and for any non-rooted scheme $\u$, we note~$\u_\e$ the scheme $\u$ rooted at the half-edge $\e$.

Let $\u$ be a non-rooted scheme. We look at $\sum_{\s,\, \unrs =\u} \psi(\s)$ where
$$\psi(\s) \de \int d(l^v)_{v\neq \e_*^-} \ |l^{\e_*}|\, \ga^{[6g-2]}\lp \textstyle\sum_{\e\in \ori{E}(\s)} |l^\e| \rp.$$
This is
\begin{align*}
\sum_{\s,\, \unrs =\u} \psi(\s) &= \frac 1 {\text{Aut}(\u)} \sum_{\e\in\vec E(\u)} \psi(\u_\e)\\
				&= \frac 1 {\lm \{ \s,\, \unrs=\u \} \rm} \sum_{\s,\, \unrs =\u} \frac 1 {\text{Aut}(\u)} \sum_{\e\in \vec E(\u)} \psi(\u_\e)\\
				&= \sum_{\s,\, \unrs =\u} \frac 1 {6g-3} \sum_{\e\in \vec E(\u)} \psi(\u_\e).
\end{align*}

We chose the convention to fix $l^{\e_*^-}$ to be $0$ because we needed one of the $l^v$'s to be $0$ and $\e_*^-$ was already distinguished as the root. This choice was totally arbitrary and we could have taken any other vertex $v_0$. This translates in the fact that, for any function $\chi$,
$$\int d(l^v)_{v\neq \e_*^-}\ \chi\lp (l^\e)_{\e\in \ori{E}(\s)}\rp$$
does not actually depend on $\e_*^-$. In order to see this properly, we do the following change of variables:
$$\text{for every } v \notin \{v_0,\e^-_*\}, \ \tilde l^v \de l^v- l^{v_0}, \hspace{5mm} \tilde l^{\e_*^-} \de - l^{v_0} \sand \tilde l^{v_0} \de 0,$$
so that $\tilde l^\e = l^\e$; and
$$\int d(l^v)_{v\neq \e_*^-}\ \chi\lp (l^\e)_{\e\in \ori{E}(\s)} \rp = \int d(l^v)_{v\neq v_0}\ \chi\lp (l^\e)_{\e\in \ori{E}(\s)}\rp.$$
Using this fact, we see that
$$\psi(\u_\e) = \int d(l^v)_{v\neq \e^-} \ |l^{\e}|\, \ga^{[6g-2]}\lp \textstyle\sum_{\e'\in \ori{E}(\s)} |l^{\e'}| \rp = \int d(l^v)_{v\neq \e_*^-} \ |l^{\e}|\, \ga^{[6g-2]}\lp \textstyle\sum_{\e'\in \ori{E}(\s)} |l^{\e'}| \rp,$$
and
$$\sum_{\s,\, \unrs =\u} \psi(\s) = \sum_{\s,\, \unrs =\u} \frac 1 {6g-3} \int d(l^v)_{v\neq \e_*^-} \lp \textstyle\sum_{\e\in \ori{E}(\s)} |l^\e| \rp \, \ga^{[6g-2]}\lp \textstyle\sum_{\e\in \ori{E}(\s)} |l^\e| \rp.$$

\bigskip

We now consider an ordering $\lambda\in \O_\s$. A computation very similar to the one we conducted above (just change $\ga^{[6g-1]}$ into $x\mapsto x\, \ga^{[6g-2]}(x)$, which becomes, after $4g-3$ successive integrations, $x\mapsto x\,\ga^{[10g-5]}(x) +(4g-3)\ga^{[10g-4]}(x)$) yields
$$\frac 1 {6g-3} \int d(l^v)_{v\neq \e_*^-} \, \1{\lambda} \sum_{\e\in \ori{E}(\s)} |l^\e| \  \ga^{[6g-2]}\lp \textstyle\sum_{\e\in \ori{E}(\s)} |l^\e| \rp
		= \frac{4g-3}{6g-3} \, \ga^{[10g-4]}(0) \prod_{i=1}^{4g-3} \frac 1 {d(\lambda,i)}.$$

The sum over all dominant schemes of \eqref{intdl} then becomes
$$\frac{2(5g-3)}{6g-3}\, \ga^{[10g-4]}(0) \sum_{\s\in\Sg}\sum_{\lambda\in \O_\s} \prod_{i=1}^{4g-3} \frac 1 {d(\lambda,i)}.$$

\subsection{Conclusion}

We still have to compute $\ga^{[10g-4]}(0)$. For that matter, we may use Fubini-Tonnelli's theorem and rewrite \eqref{pn}, for $n\ge 4$, as
\begin{align*}
\ga^{[n]}(0) &= \int_0^\infty \hspace{-3mm} dy_1 \int_0^{y_1} \hspace{-3mm} dy_{2} \dots \int_0^{y_{n-2}} \hspace{-3mm} dy_{n-1} \ \ga_1(y_1) 
	     = \int_0^\infty \hspace{-3mm} dy_1 \ \frac{y_1^{n-2}}{(n-2)!} \ \ga_1(y_1)\\
	     &= \frac 1 {n-2} \int_0^\infty \hspace{-3mm} dy_1 \ \frac{y_1^{n-4}}{(n-4)!} \ \ga_1(y_1)
	     = \frac 1 {n-2} \, \ga^{[n-2]}(0),
\end{align*}
where the second line is obtained from an integration by parts (we differentiate $y\mapsto y^{n-3}$ and integrate $y\mapsto y\,\ga_1(y)$). As $\ga^{[2]}(0)= \frac 12$, we find that
$$\ga^{[10g-4]}(0) = \lp 2^{5g-2} (5g-3)! \rp^{-1}.$$

\bigskip

Taking into account everything we have done so far, we find
$$t_g = \frac 1 {\sqrt{\pi}} \frac{ 3^g\, \Gamma \lp \frac{5g}2 -1 \rp}{2^{6g-3} \, (6g-3)\, (5g-2)!}\, \sum_{\s\in\Sg^*} \sum_{\lambda\in \O_\s}  \prod_{i=1}^{4g-3} \frac 1 {d(\lambda,i)}.$$
The expression we claimed in \eqref{tg} is then obtained by using the identity
$$ \Gamma \lp \frac{5g}2 -1 \rp \Gamma \lp \frac{5g-3}2 \rp = \frac {(5g-2)!}{2^{5g-4}} \, \sqrt{\pi}.$$

\subsubsection*{Acknowledgments}

The author warmly thanks Gr\'egory Miermont for his guidance and constant support throughout the achievement of this work.
\nocite{*}
\bibliographystyle{plain}
\bibliography{slrqpg}

\begin{thebibliography}{10}

\bibitem{bender91number}
Edward~A. {Bender} and E.~Rodney {Canfield}.
\newblock {The number of rooted maps on an orientable surface}.
\newblock {\em Journal of Combinatorial Theory, Series B}, 53(2):293--299,
  1991.

\bibitem{bertoin03ptf}
Jean {Bertoin}, Lo\"{\i}c {Chaumont}, and Jim {Pitman}.
\newblock {Path transformations of first passage bridges}.
\newblock {\em Elec. Comm. in Probab}, 8, 2003.

\bibitem{billingsley68cpm}
Patrick {Billingsley}.
\newblock {\em {Convergence of probability measures}}.
\newblock John Wiley \& Sons Inc, New York, 1968.

\bibitem{burago01cmg}
Dimitri {Burago}, Youri {Burago}, and Sergei {Ivanov}.
\newblock {\em {A course in metric geometry}}.
\newblock American Mathematical Society Providence, RI, 2001.

\bibitem{chapuy08sum}
Guillaume {Chapuy}.
\newblock {The structure of unicellular maps, and a connection between maps of
  positive genus and planar labelled trees}.
\newblock {\em Probability Theory and Related Fields}, 147(3):415--447, 2010.

\bibitem{chapuy07brm}
Guillaume {Chapuy}, Michel {Marcus}, and Gilles {Schaeffer}.
\newblock {A bijection for rooted maps on orientable surfaces}.
\newblock {\em SIAM Journal on Discrete Mathematics}, 23(3):1587--1611, 2009.

\bibitem{chassaing04rpl}
Philippe {Chassaing} and Gilles {Schaeffer}.
\newblock {Random planar lattices and integrated superBrownian excursion}.
\newblock {\em Probability Theory and Related Fields}, 128(2):161--212, 2004.
\newblock Springer.

\bibitem{cori81planar}
Robert {Cori} and Bernard {Vauquelin}.
\newblock {Planar maps are well labeled trees}.
\newblock {\em Canad. J. Math}, 33(5):1023--1042, 1981.

\bibitem{duquesne02rtl}
Thomas {Duquesne} and Jean-Fran\c{c}ois {Le Gall}.
\newblock {Random trees, Levy processes and spatial branching processes}.
\newblock {\em Asterisque}, 2002.

\bibitem{ethier86mpc}
Stewart~N. {Ethier} and Thomas~G. {Kurtz}.
\newblock {\em {Markov processes: characterization and convergence}}.
\newblock Wiley, 1986.

\bibitem{federer69gmt}
Herbert {Federer}.
\newblock {\em {Geometric measure theory}}.
\newblock Die Grundlehren der mathematischen Wissenschaften, Band 153.
  Springer-Verlag New York Inc., New York, 1969.

\bibitem{fitzsimmons93mbc}
Pat {Fitzsimmons}, Jim {Pitman}, and Marc {Yor}.
\newblock {Markovian Bridges: Construction, Palm Interpretation, and Splicing}.
\newblock In {\em Seminar on Stochastic Processes}, pages 101--130.
  Birkh{\"a}user, 1992.

\bibitem{gao93number}
Zhicheng {Gao}.
\newblock {The number of degree restricted maps on general surfaces}.
\newblock {\em Discrete Math.}, 123:47--63, 1993.

\bibitem{gromov99msr}
Mikhail {Gromov}.
\newblock {\em {Metric Structures for Riemannian and Non-Riemannian Spaces}}.
\newblock Birkh\"auser, 1999.

\bibitem{legall94mbp}
Jean-Fran\c{c}ois {Le Gall}.
\newblock {Master course: Mouvement brownien, processus de branchement et
  superprocessus}.
\newblock {\em \textup{Available on \url{http://www.dma.ens.fr/~legall/}}},
  1994.

\bibitem{legall99sbp}
Jean-Fran\c{c}ois {Le Gall}.
\newblock {\em {Spatial Branching Processes, Random Snakes, and Partial
  Differential Equations}}.
\newblock Birkh{\"a}user, 1999.

\bibitem{legall07tss}
Jean-Fran\c{c}ois {Le Gall}.
\newblock {The topological structure of scaling limits of large planar maps}.
\newblock {\em Inventiones Mathematicae}, 169(3):621--670, 2007.
\newblock Springer.

\bibitem{legall09scaling}
Jean-Fran\c{c}ois {Le Gall} and Gr\'egory {Miermont}.
\newblock {Scaling limits of random planar maps with large faces}.
\newblock {\em \textup{\url{arXiv:0907.3262}}, to appear in Ann. Probab.},
  2009.

\bibitem{legall08slb}
Jean-Fran\c{c}ois {Le Gall} and Fr\'ed\'eric {Paulin}.
\newblock {Scaling limits of bipartite planar maps are homeomorphic to the
  2-sphere}.
\newblock {\em Geometric and Functional Analysis}, 18(3):893--918, 2008.
\newblock Springer.

\bibitem{marckert03sss}
Jean-Fran\c{c}ois {Marckert} and Abdelkader {Mokkadem}.
\newblock {States spaces of the snake and its tour, convergence of the discrete
  snake}.
\newblock {\em Journal of Theoretical Probability}, 16(4):1015--1046, 2003.
\newblock Springer.

\bibitem{marckert06limit}
Jean-Fran\c{c}ois {Marckert} and Abdelkader {Mokkadem}.
\newblock {Limit of normalized quadrangulations: the Brownian map}.
\newblock {\em Annals of Probability}, 34(6):2144--2202, 2006.
\newblock Hayward, Calif. Institute of Mathematical Statistics.

\bibitem{miermont08sphericity}
Gr\'egory {Miermont}.
\newblock {On the sphericity of scaling limits of random planar
  quadrangulations}.
\newblock {\em Electronic Communications in Probability}, 13:248--257, 2008.

\bibitem{miermont09trm}
Gr\'egory {Miermont}.
\newblock {Tessellations of random maps of arbitrary genus}.
\newblock {\em Annales scientifiques de l'\'Ecole Normale Sup\'erieure},
  42(5):725--781, 2009.

\bibitem{neveu86apg}
Jacques {Neveu}.
\newblock {Arbres et processus de Galton-Watson}.
\newblock {\em Ann. Inst. H. Poincare}, 22(2):199--207, 1986.

\bibitem{petrov75sir}
Valentin~V. {Petrov}.
\newblock {Sums of Independent Random Variables.}
\newblock 1975.
\newblock Translated from the Russian by A. A. {Brown}, Ergebnisse der
  Mathematik und ihrer Grenzgebiete, Band 82.

\bibitem{petrov95ltp}
Valentin~V. {Petrov}.
\newblock {\em {Limit theorems of probability theory: sequences of independent
  random variables}}.
\newblock Oxford University Press, USA, 1995.

\bibitem{revuz99cma}
Daniel {Revuz} and Marc {Yor}.
\newblock {\em {Continuous martingales and Brownian motion}}.
\newblock Springer, 1999.

\bibitem{schaeffer98cac}
Gilles {Schaeffer}.
\newblock {\em {Conjugaison d'arbres et cartes combinatoires al\'eatoires}}.
\newblock PhD thesis, Universit\'e de Bordeaux 1, 1998.

\bibitem{schwartz70atg}
Laurent {Schwartz}.
\newblock {\em {Analyse~: Topologie generale et Analyse fonctionnelle}}.
\newblock Hermann Paris, 1970.

\bibitem{stroock99pta}
Daniel~W. {Stroock}.
\newblock {\em {Probability theory, an analytic view}}.
\newblock Cambridge University Press, 1999.

\end{thebibliography}

\end{document}